\crefname{assumption}{Assumption}{Assumptions}
\theoremstyle{plain}
\newtheorem{theorem}{Theorem}[section]
\newtheorem{corollary}[theorem]{Corollary}
\newtheorem{lemma}[theorem]{Lemma}
\newtheorem{proposition}[theorem]{Proposition}
\numberwithin{equation}{section}
\theoremstyle{definition}
\newtheorem{definition}[theorem]{Definition}
\theoremstyle{remark}
\newtheorem{remark}[theorem]{Remark}
\newtheorem{assumption}[theorem]{Assumption}
\setlist[itemize]{leftmargin=.5in}
\setlist[enumerate]{leftmargin=.5in,topsep=3pt,itemsep=3pt,label=(\roman*)}
\newcommand{\email}[1]{\href{#1}{#1}}
\newcommand{\TheTitle}{Homogenization results for the generator of multiscale Langevin dynamics in weighted Sobolev spaces} 
\newcommand{\TheAuthors}{A. Zanoni}
\title{\TheTitle}
\author{Andrea Zanoni \thanks{Institute of Mathematics, École Polytechnique Fédérale de Lausanne, 1015 Lausanne, Switzerland, \email{andrea.zanoni@epfl.ch}}}
\date{}
\DeclareMathOperator{\divergence}{div}
\newcommand{\abs}[1]{\left\lvert#1\right\rvert}
\newcommand{\norm}[1]{\left\|#1\right\|}
\newcommand{\inprod}[2]{\left\langle#1;#2\right\rangle}
\newcommand{\N}{\mathbb{N}}
\newcommand{\R}{\mathbb{R}}
\newcommand{\epl}{\varepsilon}
\newcommand{\diffL}{\mathcal{L}}
\newcommand{\defeq}{\coloneqq}
\newcommand{\eqdef}{\eqqcolon}
\newcommand{\trace}{\operatorname{tr}}
\renewcommand{\d}{\mathrm{d}}
\newcommand{\dd}{\,\mathrm{d}}
\definecolor{shade}{RGB}{100, 100, 100}
\definecolor{bordeaux}{RGB}{128, 0, 50}
\newcommand{\toweak}{\rightharpoonup}
\newcommand{\twoscale}{\rightsquigarrow}
\definecolor{leg1}{RGB}{0,114,189}
\definecolor{leg2}{RGB}{217,83,25}
\definecolor{leg3}{RGB}{237,177,32}
\definecolor{leg4}{RGB}{126,47,142}
\definecolor{leg5}{RGB}{119,172,48}
\definecolor{leg21}{RGB}{62,38,169}
\definecolor{leg22}{RGB}{46,135,247}
\definecolor{leg23}{RGB}{55,200,151}
\definecolor{leg24}{RGB}{254,195,56}
\begin{document}
	
\maketitle

\begin{abstract} 
We study the homogenization of the Poisson equation with a reaction term and of the eigenvalue problem associated to the generator of multiscale Langevin dynamics. Our analysis extends the theory of two-scale convergence to the case of weighted Sobolev spaces in unbounded domains. We provide convergence results for the solution of the multiscale problems above to their homogenized surrogate. A series of numerical examples corroborate our analysis.
\end{abstract}

\textbf{AMS subject classifications.} 35B27, 35P20, 46E35, 47A75, 60H10.

\textbf{Key words.} Langevin equation, infinitesimal generator, homogenization, eigenvalue problem, two-scale convergence, weighted Sobolev spaces.

\section{Introduction}

Multiscale diffusion processes are a powerful tool for modeling chemical reactions with species reacting at different speeds \cite{LeS16}, the evolution of markets with macro/micro structures \cite{ZMP05,AMZ06,AiJ14}, and phenomena in oceanography and atmospheric sciences \cite{CoP09,YMV19}. In all these scenarios, it is relevant to extract single-scale surrogates, which are effective for modeling the slowest component of the system, which often governs its macroscopic behavior. We consider in this paper multiscale models of the overdamped-Langevin kind, whose solution $X_t^\epl$ satisfies for a potential $V^\epl\colon \R^d \to \R$ and a $d$-dimensional Brownian motion $W_t$ the stochastic differential equation (SDE)
\begin{equation} \label{eq:Langevin_ms}
\d X_t^\epl = -\nabla V^\epl(X_t^\epl) \dd t + \sqrt{2\sigma} \dd W_t,
\end{equation}
where $\sigma > 0$ is a diffusion coefficient. Under assumptions on the potential $V^\epl$ of scale separation, periodicity and dissipativity, which we will make more precise in the following, there exists in this case a homogenized process $X_t^0$, which, for a potential $V^0\colon \R^d \to \R$ and a symmetric positive definite diffusion matrix $\Sigma \in \R^{d \times d}$, solves the stochastic differential equation
\begin{equation} \label{eq:Langevin_hom}
\d X_t^0 = -\nabla V^0\left(X_t^0\right) \dd t + \sqrt{2\Sigma} \dd W_t.
\end{equation}
The process $X_t^0$ is indeed a surrogate for the slow-scale component of the system \eqref{eq:Langevin_ms}, and in particular $X_t^\epl \to X_t^0$ in a weak sense. We remark that this multiscale model has been frequently an object of study in the field of parameter estimation due to its numerous applications and the fact that its surrogate dynamics admits a closed form expression which is easy to determine. Among the multiple examples we mention \cite{PaS07,PPS09,PPS12,AGP21,APZ22,GaZ22}, where the aim is fitting a coarse-grained model from data originating from the multiscale equation. A generalization of this model to the case of one macroscale and multiple microscales fully coupled is given in \cite{DuP16}, where the authors prove the weak convergence of the multiscale process to the homogenized one using a martingale approach \cite{PSV77}. We remark that in this case the limit equation has multiplicative noise.

In this paper, we consider the infinitesimal generator $\mathcal L^\epl$ of \eqref{eq:Langevin_ms} and study first the partial differential equation (PDE)
\begin{equation} \label{eq:intro_poisson}
- \mathcal L^\epl u^\epl + \eta u^\epl = f,
\end{equation}
where $\eta > 0$, and then the eigenvalue problem
\begin{equation} \label{eq:intro_eigen}
- \mathcal L^\epl\phi^\epl = \lambda^\epl \phi^\epl.
\end{equation}
We analyze the homogenization of problems \eqref{eq:intro_poisson} and \eqref{eq:intro_eigen} providing asymptotic results for their solutions in the limit of vanishing $\epl$. In particular, we show that they converge to the solutions of the corresponding problems for the generator $\mathcal L^0$ of the homogenized diffusion \eqref{eq:Langevin_hom}. We remark that these equations are defined on the whole space $\R^d$, and this leads us to the introduction of weighted Sobolev spaces where the weight function is the invariant density of the homogenized process \eqref{eq:Langevin_hom}. The proof of the convergence results relies on the theory of two-scale convergence, which we extend to the case of weighted Sobolev spaces in order to make it fit into our framework.

The Poisson problem for elliptic operators corresponding to infinitesimal generators of diffusion processes has been thoroughly investigated in \cite{PaV01,PaV03,PaV05}, where more probabilistic approaches and the method of corrector are employed. In particular, the authors prove the existence and uniqueness of the solution in suitable weighted Sobolev spaces and its continuity with respect to parameters in the equations. Moreover, the Poisson problem for an extended generator defined in terms of an appropriate version of the Dynkin formula is analyzed in \cite{VeK11}. Regarding the study of the homogenization of the eigenvalue problem for elliptic operators, several results exist in the context of bounded domains \cite{Kes79a,Kes79b,ACP04}, and additional first-order corrections for the eigenvalues of the homogenized generator are provided in \cite{MoV97}. Our theoretical analysis is based on the notion of two-scale convergence, which was initially introduced in \cite{Ngu89} and then studied in greater detail in \cite{All92,All94}. Our contribution to this field consists in the extension of this theory from Lebesgue spaces in bounded domains to the more general case of weighted Sobolev spaces in unbounded domains. An advantage of the two-scale convergence approach with respect to other techniques presented in the literature is the fact that it gives a mathematical justification to the formal asymptotic multiscale expansion which is usually employed to derive the homogenized equation.

We also mention that a further motivation to study the asymptotic properties of the multiscale eigenvalue problem is that it provides the framework for various spectral methods for the estimation of unknown parameters or the computation of numerical solutions, which have been proposed in literature. Two different inference procedures relying on the eigenvalues and eigenfunctions of the generator of the dynamic are presented in \cite{KeS99,CrV06} for single-scale problems and then extended to multiscale diffusions in \cite{APZ22,CrV11}, respectively. Moreover, spectral methods are also employed to solve multiscale SDEs at the diffusive time scale in \cite{APV16}, where an alternative approach to the heterogeneous multiscale method based on the spectral properties of the generator of the coarse-grained model is proposed. 

The main contribution of our work, in addition to the extension of the theory of two-scale convergence to weighted Sobolev spaces in unbounded domains, is the homogenization of the Poisson equation with a reaction term \eqref{eq:intro_poisson} and of the eigenvalue problem \eqref{eq:intro_eigen} for the generator of multiscale Langevin dynamics. In particular, we show:
\begin{itemize}
\item strong convergence in $L^2$ sense and weak convergence in $H^1$ sense of the solution of the multiscale equation \eqref{eq:intro_poisson} to the solution of the corresponding homogenized problem,
\item convergence of the eigenvalues of the multiscale generator to the corresponding eigenvalues of the homogenized generator;
\item strong convergence in $L^2$ sense and weak convergence in $H^1$ sense of the eigenvectors of the multiscale generator to the corresponding eigenvectors of the homogenized generator.
\end{itemize}

\paragraph{Notation.} Let $\rho \colon \R^d \to \R$ be a probability density function. Then, the following functional spaces will be employed throughout the paper.
\begin{itemize}
\item $L^2_\rho(\R^d)$ is the space of measurable functions $u \colon \R^d \to \R$ such that
\begin{equation}
\norm{u}_{L^2_\rho(\R^d)} \defeq \left( \int_{\R^d} u(x)^2 \rho(x) \dd x \right)^{1/2} < \infty.
\end{equation}
\item $L^2_\rho(\R^d \times Y)$ is the space of measurable functions $u \colon \R^d \times Y \to \R$ such that
\begin{equation}
\norm{u}_{L^2_\rho(\R^d \times Y)} \defeq \left( \int_{\R^d} \int_Y u(x,y)^2 \rho(x) \dd y \dd x \right)^{1/2} < \infty.
\end{equation}
\item $H^1_\rho(\R^d)$ is the space of measurable weakly differentiable functions $u \colon \R^d \to \R$ such that
\begin{equation}
\norm{u}_{H^1_\rho(\R^d)} \defeq \left( \int_{\R^d} u(x)^2 \rho(x) \dd x + \int_{\R^d} \abs{\nabla u(x)}^2 \rho(x) \dd x \right)^{1/2} < \infty.
\end{equation}
\item $C^k_{\mathrm{per}}(Y)$ with $k\in \mathbb N$ is the subspace of $C^k(\R^d)$ of $Y-$periodic functions.
\item $H^1_{\mathrm{per}}(Y)$ is the closure of $C^\infty_{\mathrm{per}}(Y)$ with respect to the norm in $H^1(Y)$.
\item $\mathcal W_{\mathrm{per}}(Y)$ is the quotient space $H^1_{\mathrm{per}}(Y) / \R$ and it is endowed with the norm
\begin{equation}
\norm{u}_{\mathcal W_{\mathrm{per}}(Y)} = \norm{\nabla u}_{L^2(Y)}.
\end{equation}
\item $L^2_\rho(\R^d; C^0_{\mathrm{per}}(Y))$ is the space of measurable functions $u \colon x \mapsto u(x) \in C^0_{\mathrm{per}}(Y)$ such that $\norm{u(x)}_{L^\infty(Y)} \in L^2_\rho(\R^d)$ and it is endowed with the norm
\begin{equation}
\norm{u}_{L^2_\rho(\R^d; C^0_{\mathrm{per}}(Y))} = \left( \int_{\R^d} \sup_{y \in Y} \abs{u(x,y)}^2 \rho(x) \dd x \right)^{1/2}.
\end{equation}
\item $L^2_\rho(\R^d; \mathcal W_{\mathrm{per}}(Y))$ is the space of measurable functions $u \colon x \mapsto u(x) \in \mathcal W_{\mathrm{per}}(Y)$ such that $\norm{u(x)}_{\mathcal W_{\mathrm{per}}(Y)} \in L^2_\rho(\R^d)$ and it is endowed with the norm
\begin{equation}
\norm{u}_{L^2_\rho(\R^d; \mathcal W_{\mathrm{per}}(Y))} = \left( \int_{\R^d} \int_Y \abs{\nabla_y u(x,y)}^2 \rho(x) \dd y \dd x \right)^{1/2}.
\end{equation}
\end{itemize}

\paragraph{Outline.} The remainder of the paper is organized as follows. In \cref{sec:setting} we introduce the multiscale Langevin dynamics and the weighted Sobolev spaces which are employed in the analysis. Then, in \cref{sec:poisson,sec:eigen} we study the homogenization of the Poisson problem with a reaction term and of the eigenvalue problem for the generator, respectively. Finally, in \cref{sec:experiments} we present numerical examples which confirm our theoretical findings.

\section{Problem setting} \label{sec:setting}

In this section we present the main properties of the class of diffusions under investigation. Let us consider the $d$-dimensional multiscale overdamped Langevin equation \eqref{eq:Langevin_ms} and assume that the potential $V^\epl$ admits a clear separation between the fastest and the slowest scale. In particular, let
\begin{equation} \label{eq:potential_def}
V^\epl(x) = V(x) + p \left( \frac{x}{\epl} \right),
\end{equation}
where $V \colon \R^d \to \R$ and $p \colon \R^d \to \R$ are respectively the slow and the fast components of the potential. Moreover, we consider the same dissipative framework of \cite[Assumption 3.1]{PaS07}, i.e., we work under the following assumption.
\begin{assumption} \label{ass:dissipativity}
The potentials $p$ and $V$ satisfy:
\begin{enumerate}
\item $p \in C^\infty(\R^d)$ is $L$-periodic in all directions for some $L > 0$;
\item $V \in C^\infty(\R^d)$ is polynomially bounded from above and it is lower bounded. Moreover, there exist $a,b > 0$ such that
\begin{equation} \label{eq:growth_condition}
- \nabla V(x) \cdot x \le a - b\abs{x}^2.
\end{equation}
\end{enumerate}
\end{assumption}
\begin{remark}
Inequality \eqref{eq:growth_condition} in \cref{ass:dissipativity}, which means that the potential $V$ grows at least quadratically, is crucial for the following analysis because it ensures the ergodicity of the stochastic processes under consideration.
\end{remark}
Then, replacing the potential \eqref{eq:potential_def} into equation \eqref{eq:Langevin_ms} we have
\begin{equation} \label{eq:SDE_ms}
\dd X_t^\epl = - \nabla V(X_t^\epl) \dd t - \frac1\epl \nabla p \left( \frac{X_t^\epl}\epl \right) \dd t + \sqrt{2 \sigma} \dd W_t,
\end{equation}
which is the model that we will consider from now on. Employing the theory of homogenization (see, e.g., \cite[Chapter 3]{BLP78}), we deduce the existence of the homogenized SDE
\begin{equation} \label{eq:SDE_hom}
\dd X_t^0 = - K \nabla V(X_t^0) \dd t + \sqrt{2 \Sigma} \dd W_t,
\end{equation}
whose solution $X_t^0$ is the limit in law of the solution $X_t^\epl$ of equation \eqref{eq:SDE_ms} as random variables in $\mathcal C^0([0,T]; \R^d)$. The new diffusion coefficient is given by $\Sigma = K \sigma \in \R^{d \times d}$, where the symmetric positive-definite matrix $K$ has the explicit formula
\begin{equation} \label{eq:K_def}
K = \int_Y (I + \nabla \Phi(y)) \mu(y) \dd y = \int_Y (I + \nabla \Phi(y)) (I + \nabla \Phi(y))^\top \mu(y) \dd y,
\end{equation}
with $Y = [0,L]^d$ and
\begin{equation} \label{eq:mu_def}
\mu(y) = \frac{1}{C_\mu} e^{- \frac1\sigma p(y)} \qquad \text{with} \qquad C_\mu = \int_Y e^{- \frac1\sigma p(y)} \dd y,
\end{equation}
and where $\Phi \colon Y \to \R^d$ is the unique solution of the $d$-dimensional cell problem
\begin{equation} \label{eq:Phi_equation}
- \sigma \Delta \Phi(y) + \nabla \Phi(y) \nabla p(y) = - \nabla p(y), \qquad y \in Y,
\end{equation}
endowed with periodic boundary conditions, which satisfies the constraint in $\R^d$
\begin{equation}
\int_Y \Phi(y) \mu(y) \dd y = 0.
\end{equation}
Let us remark that for an $\R^d$-valued function $\Phi$, we denote by $\nabla \Phi$ and $\Delta \Phi$ the Jacobian matrix and the Laplacian, respectively. Under \cref{ass:dissipativity}, it has been shown in \cite{PaS07} that the processes $X_t^\epl$ and $X_t^0$ are geometrically ergodic with unique invariant distributions $\rho^\epl$ and $\rho^0$, respectively, given by
\begin{equation} \label{eq:rho_ms}
\rho^\epl(x) = \frac{1}{C_{\rho^\epl}} e^{- \frac1\sigma \left( V(x) + p \left( \frac x \epl \right) \right)} \qquad \text{with} \qquad C_{\rho^\epl} = \int_{\R^d} e^{- \frac1\sigma \left( V(x) + p \left( \frac x \epl \right) \right)} \dd x,
\end{equation}
and
\begin{equation} \label{eq:rho_hom}
\rho^0(x) = \frac{1}{C_{\rho^0}} e^{- \frac1d \trace(\Sigma^{-1}K) V(x)} \qquad \text{with} \qquad C_{\rho^0} = \int_{\R^d} e^{- \frac1d \trace(\Sigma^{-1}K) V(x)} \dd x.
\end{equation}
Notice that $\trace(\Sigma^{-1}K)/d = 1 / \sigma$ since $\Sigma = K\sigma$ and that $\rho^\epl \rightharpoonup \rho^0$ in $L^1(\R^d)$ by \cite[Proposition 5.2]{PaS07}. We finally introduce the generators $\diffL^\epl$ and $\diffL^0$ of the multiscale process \eqref{eq:SDE_ms} and its homogenized counterpart \eqref{eq:SDE_hom}, respectively, which are defined for all $u \in \mathcal C^2(\R^d)$ as
\begin{equation} \label{eq:generator_ms}
\diffL^\epl u(x) = - \left(\nabla V(x) + \frac1\epl \nabla p \left( \frac x \epl \right) \right) \cdot \nabla u(x) + \sigma \Delta u(x),
\end{equation}
and
\begin{equation} \label{eq:generator_hom}
\diffL^0 = - K \nabla V(x) \cdot \nabla u(x) + \Sigma : \nabla^2 u(x),
\end{equation}
where $:$ denotes the Frobenius inner product and $\nabla^2$ the Hessian matrix. Since the process $X_t^\epl$ is close in a weak sense to the process $X_t^0$ as $\epl \to 0$, then also the generators $\diffL^\epl$ and $\diffL^0$ behave similarly when the multiscale parameter vanishes.

\subsection{Preliminary results}

In this section we introduce the main functional spaces which will be employed in the following analysis and we study their relations. Let us consider the weighted Sobolev spaces $L^2_{\rho^\epl}(\R^d)$, $L^2_{\rho^0}(\R^d)$, $H^1_{\rho^\epl}(\R^d)$ and $H^1_{\rho^0}(\R^d)$, where the weight functions are the invariant measures defined in \eqref{eq:rho_ms} and \eqref{eq:rho_hom}. First, we show that the weighted Lebesgue spaces $L^2_{\rho^\epl}(\R^d)$ and $L^2_{\rho^0}(\R^d)$ describe the same space of functions but they are endowed with different norms.

\begin{lemma} \label{lem:equivalence_L2}
Under \cref{ass:dissipativity}, there exist two constants $C_{\mathrm{low}}, C_{\mathrm{up}} > 0$ independent of $\epl$ such that
\begin{equation}
C_{\mathrm{low}} \norm{u}_{L^2_{\rho^0}(\R^d)} \le \norm{u}_{L^2_{\rho^\epl}(\R^d)} \le C_{\mathrm{up}} \norm{u}_{L^2_{\rho^0}(\R^d)}.
\end{equation}
In particular, the injections $I_{L^2_{\rho^\epl}(\R^d) \hookrightarrow L^2_{\rho^0}(\R^d)}$ and $I_{L^2_{\rho^0}(\R^d) \hookrightarrow L^2_{\rho^\epl}(\R^d)}$ are continuous.
\end{lemma}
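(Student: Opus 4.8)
The plan is to compare the two weights $\rho^\epl$ and $\rho^0$ directly pointwise, up to a bounded fluctuation coming from the periodic part $p$. Observe that
\begin{equation}
\frac{\rho^\epl(x)}{\rho^0(x)} = \frac{C_{\rho^0}}{C_{\rho^\epl}} \, e^{-\frac1\sigma p\left(\frac x\epl\right)} \, e^{\left(\frac1d\trace(\Sigma^{-1}K) - \frac1\sigma\right) V(x)},
\end{equation}
and since $\trace(\Sigma^{-1}K)/d = 1/\sigma$ (as noted right after \eqref{eq:rho_hom}), the exponential in $V$ disappears and we are left with
\begin{equation}
\frac{\rho^\epl(x)}{\rho^0(x)} = \frac{C_{\rho^0}}{C_{\rho^\epl}} \, e^{-\frac1\sigma p\left(\frac x\epl\right)}.
\end{equation}
Because $p \in C^\infty(\R^d)$ is $L$-periodic by \cref{ass:dissipativity}, it is bounded: there are constants $p_{\min} \le p(y) \le p_{\max}$ for all $y$, independent of $\epl$. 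Hence $e^{-\frac1\sigma p_{\max}} \le e^{-\frac1\sigma p(x/\epl)} \le e^{-\frac1\sigma p_{\min}}$ uniformly in $x$ and $\epl$.

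The only remaining point is to control the normalizing constant ratio $C_{\rho^0}/C_{\rho^\epl}$ uniformly in $\epl$. Writing $C_{\rho^\epl} = \int_{\R^d} e^{-\frac1\sigma V(x)} e^{-\frac1\sigma p(x/\epl)}\dd x$ and using the same uniform two-sided bound on $e^{-\frac1\sigma p(x/\epl)}$, together with the fact that $C_{\rho^0} = \int_{\R^d} e^{-\frac1\sigma V(x)}\dd x$ is finite and strictly positive (finiteness follows from the dissipativity bound $-\nabla V(x)\cdot x \le a - b|x|^2$ in \cref{ass:dissipativity}, which forces $V$ to grow at least quadratically and hence makes $e^{-V/\sigma}$ integrable, while positivity is immediate), we get
\begin{equation}
e^{-\frac1\sigma p_{\max}} \, C_{\rho^0} \le C_{\rho^\epl} \le e^{-\frac1\sigma p_{\min}} \, C_{\rho^0},
\end{equation}
so $C_{\rho^0}/C_{\rho^\epl}$ is bounded above and below by positive constants independent of $\epl$. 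Combining this with the pointwise bound above yields $c_{\mathrm{low}}^2 \le \rho^\epl(x)/\rho^0(x) \le c_{\mathrm{up}}^2$ for constants independent of $\epl$ and $x$. Multiplying by $u(x)^2\rho^0(x)$ and integrating over $\R^d$ gives $c_{\mathrm{low}}^2 \norm{u}_{L^2_{\rho^0}(\R^d)}^2 \le \norm{u}_{L^2_{\rho^\epl}(\R^d)}^2 \le c_{\mathrm{up}}^2 \norm{u}_{L^2_{\rho^0}(\R^d)}^2$, which is the claimed equivalence with $C_{\mathrm{low}} = c_{\mathrm{low}}$, $C_{\mathrm{up}} = c_{\mathrm{up}}$. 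The continuity of the two injections is then an immediate restatement: the identity map is bounded in both directions, and in particular a function in one space is measurable and has finite norm in the other, so the two spaces coincide as sets.

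I do not expect any serious obstacle here; the statement is essentially the observation that the exponent mismatch in $V$ cancels exactly by the identity $\trace(\Sigma^{-1}K)/d = 1/\sigma$, after which everything reduces to the boundedness of the periodic potential $p$. The only mild care needed is to confirm that the normalization constants stay uniformly comparable, which I would handle exactly as above via the same uniform bound on $e^{-p(\cdot/\epl)/\sigma}$; the integrability of $e^{-V/\sigma}$ used there is guaranteed by the quadratic-type lower growth of $V$ implied by \cref{ass:dissipativity}. If desired, one can make the constants explicit: $C_{\mathrm{low}} = e^{-\frac1{2\sigma}(p_{\max}-p_{\min})}$ and $C_{\mathrm{up}} = e^{\frac1{2\sigma}(p_{\max}-p_{\min})}$, though the statement only asks for existence.
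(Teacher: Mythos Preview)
Your proof is correct and follows essentially the same approach as the paper: both arguments reduce to the boundedness of the periodic potential $p$, yielding uniform two-sided bounds on $e^{-p(x/\epl)/\sigma}$ and hence on $\rho^\epl/\rho^0$. If anything, you are slightly more explicit than the paper, which writes the bound $e^{-M/\sigma}\le e^{-p(x/\epl)/\sigma}\le e^{M/\sigma}$ and then passes directly to the norm inequality without isolating the normalization-constant ratio $C_{\rho^0}/C_{\rho^\epl}$; your treatment of that ratio via the same uniform bound is the natural way to fill in that step.
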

\begin{proof}
Since $p \in C^\infty(\R^d)$ is $Y$-periodic, then there exists a constant $M > 0$ such that  $\abs{p(y)} \le M$ for all $y \in \R^d$. Therefore, we have
\begin{equation} 
0 < e^{-\frac{M}{\sigma}} \le e^{- \frac1\sigma p \left( \frac x \epl \right)} \le e^{\frac{M}{\sigma}},
\end{equation}
which implies
\begin{equation}
e^{-\frac{M}{\sigma}} \norm{u}_{L^2_{\rho^0}(\R^d)} \le \norm{u}_{L^2_{\rho^\epl}(\R^d)} \le e^{\frac{M}{\sigma}} \norm{u}_{L^2_{\rho^0}(\R^d)}.
\end{equation}
Finally, defining $C_{\mathrm{low}} \defeq e^{-\frac{M}{\sigma}}$ and $C_{\mathrm{up}} \defeq e^{\frac{M}{\sigma}}$ we obtain the desired result.
\end{proof}

An analogous result holds true also for the weighted Sobolev spaces $H^1_{\rho^\epl}(\R^d)$ and $H^1_{\rho^0}(\R^d)$ and follows directly from \cref{lem:equivalence_L2}.

\begin{corollary} \label{cor:equivalence_H1}
Under \cref{ass:dissipativity}, there exist two constants $C_{\mathrm{low}}, C_{\mathrm{up}} > 0$ independent of $\epl$ such that
\begin{equation}
C_{\mathrm{low}} \norm{u}_{H^1_{\rho^0}(\R^d)} \le \norm{u}_{H^1_{\rho^\epl}(\R^d)} \le C_{\mathrm{up}} \norm{u}_{H^1_{\rho^0}(\R^d)}.
\end{equation}
In particular, the injections $I_{H^1_{\rho^\epl}(\R^d) \hookrightarrow H^1_{\rho^0}(\R^d)}$ and $I_{H^1_{\rho^0}(\R^d) \hookrightarrow H^1_{\rho^\epl}(\R^d)}$ are continuous.
\end{corollary}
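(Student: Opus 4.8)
The plan is to deduce the statement directly from \cref{lem:equivalence_L2} by exploiting the product structure of the $H^1_\rho$ norm. Recall that for any probability density $\rho$ one has $\norm{u}_{H^1_\rho(\R^d)}^2 = \norm{u}_{L^2_\rho(\R^d)}^2 + \norm{\abs{\nabla u}}_{L^2_\rho(\R^d)}^2$, where $\abs{\nabla u}$ is regarded as a scalar measurable function. Since \cref{lem:equivalence_L2} holds for an \emph{arbitrary} element of $L^2_{\rho^0}(\R^d)$, I would apply it twice, once to $u$ itself and once to $\abs{\nabla u}$ (equivalently, componentwise to each $\partial_{x_i} u$), obtaining
\begin{equation*}
C_{\mathrm{low}}^2 \norm{u}_{L^2_{\rho^0}}^2 \le \norm{u}_{L^2_{\rho^\epl}}^2 \le C_{\mathrm{up}}^2 \norm{u}_{L^2_{\rho^0}}^2, \qquad C_{\mathrm{low}}^2 \norm{\abs{\nabla u}}_{L^2_{\rho^0}}^2 \le \norm{\abs{\nabla u}}_{L^2_{\rho^\epl}}^2 \le C_{\mathrm{up}}^2 \norm{\abs{\nabla u}}_{L^2_{\rho^0}}^2,
\end{equation*}
with the \emph{same} $\epl$-independent constants $C_{\mathrm{low}} = e^{-M/\sigma}$ and $C_{\mathrm{up}} = e^{M/\sigma}$ as in \cref{lem:equivalence_L2}, because those constants do not depend on the function being estimated. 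Summing the two chains of inequalities and taking square roots yields precisely $C_{\mathrm{low}} \norm{u}_{H^1_{\rho^0}(\R^d)} \le \norm{u}_{H^1_{\rho^\epl}(\R^d)} \le C_{\mathrm{up}} \norm{u}_{H^1_{\rho^0}(\R^d)}$.

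Before carrying this out I would note that $H^1_{\rho^\epl}(\R^d)$ and $H^1_{\rho^0}(\R^d)$ describe the same set of functions, so that the norm comparison is meaningful. This is immediate from the pointwise weight bounds established in the proof of \cref{lem:equivalence_L2}: the densities $\rho^\epl$ and $\rho^0$ are comparable, hence a measurable function lies in $L^2_{\rho^\epl}(\R^d)$ if and only if it lies in $L^2_{\rho^0}(\R^d)$, and the notion of weak derivative is independent of the weight; therefore $u \in H^1_{\rho^0}(\R^d)$ iff $u$ and its weak gradient are in $L^2_{\rho^0}(\R^d)$ iff they are in $L^2_{\rho^\epl}(\R^d)$ iff $u \in H^1_{\rho^\epl}(\R^d)$.

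Finally, the continuity of the two injections is just a reformulation of the norm equivalence: the bound $\norm{u}_{H^1_{\rho^\epl}(\R^d)} \le C_{\mathrm{up}} \norm{u}_{H^1_{\rho^0}(\R^d)}$ says exactly that $I_{H^1_{\rho^0}(\R^d) \hookrightarrow H^1_{\rho^\epl}(\R^d)}$ is bounded, and symmetrically for the reverse injection. I do not expect any genuine obstacle here; the only point deserving a word of care is that \cref{lem:equivalence_L2}, although phrased for scalar functions, applies verbatim to the components of $\nabla u$, since its proof uses only the pointwise bound on the ratio of the weights.
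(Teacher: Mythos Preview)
Your proposal is correct and is precisely the argument the paper has in mind: the paper does not spell out a proof at all, stating only that the corollary ``follows directly from \cref{lem:equivalence_L2}'', and your write-up is exactly the natural unpacking of that remark (apply the $L^2$ comparison to $u$ and to $\abs{\nabla u}$ with the same constants, then sum). Your additional observations about the coincidence of the underlying sets and the reformulation of continuity are accurate and entirely in line with the paper's intent.
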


Let us now consider the injections $I_{H^1_{\rho^0}(\R^d) \hookrightarrow L^2_{\rho^0}(\R^d)}$ and $I_{H^1_{\rho^\epl}(\R^d) \hookrightarrow L^2_{\rho^\epl}(\R^d)}$, which are continuous since by definition we have
\begin{equation}
\norm{u}_{L^2_{\rho^0}(\R^d)} \le \norm{u}_{H^1_{\rho^0}(\R^d)} \qquad \text{and} \qquad \norm{u}_{L^2_{\rho^\epl}(\R^d)} \le \norm{u}_{H^1_{\rho^\epl}(\R^d)}.
\end{equation} 
We remark that these injections are not compact in general, differently from classical non weighted Sobolev spaces in bounded and regular domains, where the compactness is always guaranteed by the Rellich--Kondrachov theorem \cite[Theorem 5.7.1]{Eva98}. Hence, in order to ensure the compactness of the injections $I_{H^1_{\rho^0}(\R^d) \hookrightarrow L^2_{\rho^0}(\R^d)}$ and $I_{H^1_{\rho^\epl}(\R^d) \hookrightarrow L^2_{\rho^\epl}(\R^d)}$ we make the following additional assumption.

\begin{assumption} \label{ass:compactness}
There exists a constant $\beta > 2$ such that the slow-scale potential $V$ satisfies
\begin{equation}
\lim_{\abs{x} \to +\infty} \left( \frac1\beta \abs{\nabla V(x)}^2 - \Delta V(x) \right) = +\infty \qquad \text{and} \qquad \lim_{\abs{x} \to +\infty} \abs{\nabla V(x)} = +\infty.
\end{equation}
\end{assumption}

Then, \cref{ass:compactness} implies condition (82) in \cite{APV16}, and due to \cite[Proposition A.4]{APV16} it follows that the injection $I_{H^1_{\rho^0}(\R^d) \hookrightarrow L^2_{\rho^0}(\R^d)}$ is compact and the measure $\rho^0$ satisfies the Poincaré inequality for all $u \in H^1_{\rho^0}(\R^d)$ and for a constant $C_P > 0$ 
\begin{equation} \label{eq:poincare_0}
\int_{\R^d} (u(x) - \bar u^0)^2 \rho^0(x) \dd x \le C_P \int_{\R^d} \abs{\nabla u(x)}^2 \rho^0(x) \dd x,
\end{equation}
where $\bar u^0 = \int_{\R^d} u(x) \rho^0(x) \dd x$.

\begin{remark} 
\cref{ass:compactness} is satisfied, e.g., by quadratic potentials in $\R^d$ of the form $V(x) = \frac12
x^\top Dx$, where $D \in \R^{d \times d}$ is a symmetric positive definite matrix, and by the bistable potential $V(x) = x^4/4 - x^2/2$ in $\R$. Moreover, \cref{ass:compactness} is not the only sufficient condition to ensure the compactness of the injection $I_{H^1_{\rho^0}(\R^d) \hookrightarrow L^2_{\rho^0}(\R^d)}$. Two other necessary and sufficient conditions are presented in Proposition 1.3 and Lemma 2.2 in \cite{Gan10}. In particular, it is required that the potential $V$ is such that either the Schrödinger operator
\begin{equation}
\mathcal S = - \Delta + \frac14 \abs{\nabla \rho^0}^2 - \frac12 \Delta \rho^0,
\end{equation}
or the operator
\begin{equation}
\mathcal P = - \Delta + \nabla \rho^0 \cdot \nabla,
\end{equation}
has compact resolvent. Moreover, another sufficient condition is given in \cite[Theorem 3.1]{Hoo81}, where it is proved that the potentials of the form $V = \abs{x}^{2p}$ with $p$ integer greater than zero satisfy the condition.
\end{remark}

Given \cref{ass:compactness} and using \cite[Proposition A.4]{APV16}, we can now prove that the same compactness result holds true also for the spaces $H^1_{\rho^\epl}(\R^d)$ and $L^2_{\rho^\epl}(\R^d)$.

\begin{lemma} \label{lem:injection_compact_e}
Under \cref{ass:dissipativity,ass:compactness}, the injection $I_{H^1_{\rho^\epl}(\R^d) \hookrightarrow L^2_{\rho^\epl}(\R^d)}$ is a compact operator and the measure $\rho^\epl$ satisfies the Poincaré inequality for all $u \in H^1_{\rho^\epl}(\R^d)$ and for a constant $\widetilde C_P^\epl > 0$ 
\begin{equation} \label{eq:poincare_e}
\int_{\R^d} (u(x) - \bar u^\epl)^2 \rho^\epl(x) \dd x \le \widetilde C_P^\epl \int_{\R^d} \abs{\nabla u(x)}^2 \rho^\epl(x) \dd x,
\end{equation}
where $\bar u^\epl = \int_{\R^d} u(x) \rho^\epl(x) \dd x$.
\end{lemma}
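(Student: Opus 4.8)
The plan is to transfer the compactness of $I_{H^1_{\rho^0}(\R^d) \hookrightarrow L^2_{\rho^0}(\R^d)}$, established in \cite[Proposition A.4]{APV16}, to the $\epl$-weighted spaces by exploiting the two-sided norm equivalences of \cref{lem:equivalence_L2} and \cref{cor:equivalence_H1}, with constants uniform in $\epl$. The key observation is that, as sets, $H^1_{\rho^\epl}(\R^d) = H^1_{\rho^0}(\R^d)$ and $L^2_{\rho^\epl}(\R^d) = L^2_{\rho^0}(\R^d)$, so $I_{H^1_{\rho^\epl}(\R^d) \hookrightarrow L^2_{\rho^\epl}(\R^d)}$ factors as the composition of three continuous maps: the identity injection $I_{H^1_{\rho^\epl}(\R^d) \hookrightarrow H^1_{\rho^0}(\R^d)}$ (continuous by \cref{cor:equivalence_H1}), the compact injection $I_{H^1_{\rho^0}(\R^d) \hookrightarrow L^2_{\rho^0}(\R^d)}$, and the identity injection $I_{L^2_{\rho^0}(\R^d) \hookrightarrow L^2_{\rho^\epl}(\R^d)}$ (continuous by \cref{lem:equivalence_L2}). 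Since the composition of a bounded operator with a compact operator (on either side) is compact, the claimed compactness of $I_{H^1_{\rho^\epl}(\R^d) \hookrightarrow L^2_{\rho^\epl}(\R^d)}$ follows immediately. Concretely, given a bounded sequence $(u_n)$ in $H^1_{\rho^\epl}(\R^d)$, it is bounded in $H^1_{\rho^0}(\R^d)$ by the upper bound in \cref{cor:equivalence_H1}, hence admits a subsequence converging in $L^2_{\rho^0}(\R^d)$, which then also converges in $L^2_{\rho^\epl}(\R^d)$ by the upper bound in \cref{lem:equivalence_L2}.

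For the Poincaré inequality, I would derive \eqref{eq:poincare_e} from \eqref{eq:poincare_0} by the same equivalence argument, but one must be careful with the means: $\bar u^\epl = \int u \, \rho^\epl \dd x$ differs from $\bar u^0 = \int u \, \rho^0 \dd x$. The cleanest route uses the variational characterization of the mean: $\bar u^\epl$ is the unique minimizer over $c \in \R$ of $\int_{\R^d}(u(x)-c)^2 \rho^\epl(x)\dd x$. Hence
\begin{equation}
\int_{\R^d}(u - \bar u^\epl)^2 \rho^\epl \dd x \le \int_{\R^d}(u - \bar u^0)^2 \rho^\epl \dd x \le C_{\mathrm{up}}^2 \int_{\R^d}(u - \bar u^0)^2 \rho^0 \dd x,
\end{equation}
where the last step uses \cref{lem:equivalence_L2} applied to the function $u - \bar u^0 \in L^2_{\rho^0}(\R^d)$. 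Applying \eqref{eq:poincare_0} and then the lower bound of \cref{lem:equivalence_L2} to $\nabla u$ gives
\begin{equation}
\int_{\R^d}(u - \bar u^\epl)^2 \rho^\epl \dd x \le C_{\mathrm{up}}^2 C_P \int_{\R^d}|\nabla u|^2 \rho^0 \dd x \le \frac{C_{\mathrm{up}}^2 C_P}{C_{\mathrm{low}}^2} \int_{\R^d}|\nabla u|^2 \rho^\epl \dd x,
\end{equation}
so one may take $\widetilde C_P \defeq C_{\mathrm{up}}^2 C_P / C_{\mathrm{low}}^2$, which is indeed independent of $\epl$ since $C_{\mathrm{low}}, C_{\mathrm{up}}$ are.

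There is no real obstacle here: the whole argument is soft functional analysis once the uniform norm equivalences are in hand. The only point requiring minor attention is the mean-shift in the Poincaré step, which I would handle via the minimization property of $\bar u^\epl$ as above rather than trying to estimate $\bar u^\epl - \bar u^0$ directly. I would also note, for completeness, that the inequalities of \cref{lem:equivalence_L2} extend trivially to the case where $u$ is replaced by a component of $\nabla u$, since the weights $\rho^\epl$ and $\rho^0$ are compared pointwise and independently of the function being integrated.
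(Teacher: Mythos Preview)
Your proof is correct, but it follows a different route than the paper's. The paper argues that, for each fixed $\epl>0$, the full multiscale potential $V^\epl(x)=V(x)+p(x/\epl)$ itself satisfies \cref{ass:compactness}: since $p$ is smooth and periodic, $p$, $\nabla p$ and $\Delta p$ are all bounded, so the perturbation of $\nabla V$ and $\Delta V$ by the fast-scale terms does not affect the limits in \cref{ass:compactness}. One can then invoke \cite[Proposition~A.4]{APV16} directly with $V$ replaced by $V^\epl$, obtaining both the compact embedding and the Poincar\'e inequality for $\rho^\epl$ in one stroke.

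Your approach instead transfers the $\rho^0$-result to $\rho^\epl$ via the norm equivalences of \cref{lem:equivalence_L2} and \cref{cor:equivalence_H1}, factoring the embedding through the known compact one and handling the mismatch of means by the variational characterisation of $\bar u^\epl$. This is entirely sound and arguably more elementary, since it avoids re-checking the growth conditions of \cref{ass:compactness} for $V^\epl$. It also has a small bonus: because $C_{\mathrm{low}}$ and $C_{\mathrm{up}}$ are independent of $\epl$, your argument yields a Poincar\'e constant $\widetilde C_P=C_{\mathrm{up}}^2 C_P/C_{\mathrm{low}}^2$ that is uniform in $\epl$, whereas the paper's route, applying \cite[Proposition~A.4]{APV16} to $V^\epl$ directly, a priori gives an $\epl$-dependent constant. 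The paper's approach, on the other hand, is more self-contained in that it does not need the separate mean-shift step.
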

\begin{proof}
Let $V^\epl$ be defined in \eqref{eq:potential_def}. Then, due to \cref{ass:dissipativity} there exists a constant $M>0$ such that $\abs{p(y)} \le M$, $\abs{\nabla p(y)} \le M$ and $\abs{\Delta p(y)} \le M$ for all $y \in Y$. We now show that for any $\epl>0$ the potential $V^\epl$ satisfies condition (82) in \cite{APV16}. In fact, by the triangle inequality we first have
\begin{equation}
\abs{\nabla V^\epl(x)} = \abs{\nabla V(x) + \frac1\epl \nabla p \left( \frac{x}{\epl} \right)} \ge \abs{\nabla V(x)} - \frac1\epl \abs{\nabla p \left( \frac{x}{\epl} \right)} \ge \abs{\nabla V(x)} - \frac{M}\epl,
\end{equation}
which due to \cref{ass:compactness} implies that
\begin{equation}
\lim_{\abs{x} \to +\infty} \abs{\nabla V^\epl(x)} \ge \lim_{\abs{x} \to +\infty} \abs{\nabla V(x)} - \frac{M}\epl = +\infty.
\end{equation}
Moreover, let $\beta$ be given in \cref{ass:compactness}, and notice that by Young's inequality we get
\begin{equation}
\begin{aligned}
\abs{\nabla V^\epl(x)}^2 &\ge \abs{\nabla V(x)}^2 + \frac1{\epl^2} \abs{\nabla p \left( \frac{x}{\epl} \right)}^2 - \frac2\epl \abs{\nabla V(x)} \abs{\nabla p \left( \frac{x}{\epl} \right)} \\
&\ge \abs{\nabla V(x)}^2 - \frac{\beta-2}{\beta} \abs{\nabla V(x)}^2 - \frac{\beta}{(\beta-2) \epl^2} \abs{\nabla p \left( \frac{x}{\epl} \right)}^2 \\
&\ge \frac2\beta \abs{\nabla V(x)}^2 - \frac{\beta M^2}{(\beta-2) \epl^2},
\end{aligned}
\end{equation}
which due to \cref{ass:compactness} yields 
\begin{equation}
\begin{aligned}
\lim_{\abs{x} \to +\infty} \left( \frac14 \abs{\nabla V^\epl(x)}^2 - \frac12 \Delta V^\epl(x) \right) &\ge \frac12 \lim_{\abs{x} \to +\infty} \left( \frac1\beta \abs{\nabla V(x)}^2 - \Delta V(x) \right) -  \frac{\beta M^2}{4(\beta-2) \epl^2} - \frac{M}{2\epl^2} \\
&= +\infty.
\end{aligned}
\end{equation}
Therefore, condition (82) in \cite{APV16} is satisfied, and following the same argument of \cite[Proposition A.4]{APV16} we obtain the desired result.
\end{proof}

\section{Poisson equation with a reaction term} \label{sec:poisson}

In this section we study the problem for the multiscale generator
\begin{equation} \label{eq:PDE_ms}
- \diffL^\epl u^\epl + \eta u^\epl = f,
\end{equation}
with $f \in L^2_{\rho^\epl}(\R^d)$ and where the reaction term with coefficient $\eta > 0$ is added in order to ensure the well-posedness of the problem, and we analyze its homogenization. In particular, we show that the solution $u^\epl$ converges in some sense which will be specified later to the solution $u^0$ of the Poisson problem for the homogenized generator with a reaction term
\begin{equation} \label{eq:PDE_hom}
- \diffL^0 u^0 + \eta u^0 = f,
\end{equation}
where, in view of \cref{lem:equivalence_L2}, $f$ is now seen as a function of $L^2_{\rho^0}(\R^d)$. 

\begin{remark}
We decided to study the Poisson equation with a reaction term with coefficient $\eta > 0$ so that, as we will see later, the bilinear form of the corresponding weak formulation is coercive. This guarantees the well-posedness of the problem without additional conditions on the solution and on the right-hand side, which would be otherwise needed if the bilinear form was only weakly coercive as in the case $\eta = 0$. Moreover, this PDE will be useful in the study of the homogenization of the eigenvalue problem for the generator, which is the focus of \cref{sec:eigen} and the main purpose of this work. We finally remark that the solutions to the equations \eqref{eq:PDE_ms} and \eqref{eq:PDE_hom} depend not only on $\epl$, but also on the choice of the parameter $\eta$. However, in order to simplify the notation, we decided not to include $\eta$ as a subscript of the solutions $u^\epl$ and $u^0$.
\end{remark}

\subsection{Weak formulation}

We first write the weak formulation of problems \eqref{eq:PDE_ms} and \eqref{eq:PDE_hom} and, applying the Lax--Milgram lemma, we prove that they admit a unique solution respectively in the spaces $H^1_{\rho^\epl}(\R^d)$ and $H^1_{\rho^0}(\R^d)$. Since the proof is analogous for both the cases, we present the details only in the multiscale setting. Letting $\psi \in H^1_{\rho^\epl}(\R^d)$ be a test function, multiplying equation \eqref{eq:PDE_ms} by $\psi(x) \rho^\epl(x)$ and integrating over $\R^d$ and by parts we obtain
\begin{equation}
\sigma \int_{\R^d} \nabla u^\epl(x) \cdot \nabla \psi(x) \rho^\epl(x) \dd x + \eta \int_{\R^d} u^\epl(x) \psi(x) \rho^\epl(x) \dd x = \int_{\R^d} f(x) \psi(x) \rho^\epl(x) \dd x.
\end{equation}
Therefore, the weak formulation of problem \eqref{eq:PDE_ms} reads:
\begin{equation} \label{eq:weakPDE_ms}
\text{find } u^\epl \in H^1_{\rho^\epl}(\R^d) \text{ such that } B^\epl(u^\epl, \psi) = F^\epl(\psi) \text{ for all } \psi \in H^1_{\rho^\epl}(\R^d),
\end{equation}
where $B^\epl \colon H^1_{\rho^\epl}(\R^d) \times H^1_{\rho^\epl}(\R^d) \to \R$ and $F^\epl \colon H^1_{\rho^\epl}(\R^d) \to \R$ are defined as
\begin{equation} \label{eq:Be_def}
\begin{aligned}
B^\epl(\varphi, \psi) &= \sigma \int_{\R^d} \nabla \varphi(x) \cdot \nabla \psi(x) \rho^\epl(x) \dd x + \eta \int_{\R^d} \varphi(x) \psi(x) \rho^\epl(x) \dd x, \\
F^\epl(\psi) &= \int_{\R^d} f(x) \psi(x) \rho^\epl(x) \dd x.
\end{aligned}
\end{equation}
Similarly, the weak formulation of problem \eqref{eq:PDE_hom} reads:
\begin{equation} \label{eq:weakPDE_hom}
\text{find } u^0 \in H^1_{\rho^0}(\R^d) \text{ such that } B^0(u^0, \psi) = F^0(\psi) \text{ for all } \psi \in H^1_{\rho^0}(\R^d),
\end{equation}
where $B^0 \colon H^1_{\rho^0}(\R^d) \times H^1_{\rho^0}(\R^d) \to \R$ and $F^0 \colon H^1_{\rho^0}(\R^d) \to \R$ are defined as
\begin{equation} \label{eq:B0_def}
\begin{aligned}
B^0(\varphi, \psi) &= \int_{\R^d} \Sigma \nabla \varphi(x) \cdot \nabla \psi(x) \rho^0(x) \dd x + \eta \int_{\R^d} \varphi(x) \psi(x) \rho^0(x) \dd x, \\
F^0(\psi) &= \int_{\R^d} f(x) \psi(x) \rho^0(x) \dd x.
\end{aligned}
\end{equation}

Then, the well-posedness of the two problems is given by the following lemmas.

\begin{lemma} \label{lem:weakPDE_ms}
Problem \eqref{eq:weakPDE_ms} has a unique solution $u^\epl \in H^1_{\rho^\epl}(\R^d)$ which satisfies
\begin{equation} \label{eq:estimatePDE_ms}
\norm{u^\epl}_{H^1_{\rho^\epl}(\R^d)} \le \frac{1}{\min \{ \sigma, \eta \}} \norm{f}_{L^2_{\rho^\epl}(\R^d)}.
\end{equation}
\end{lemma}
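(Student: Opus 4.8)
The plan is to apply the Lax--Milgram lemma to the weak formulation \eqref{eq:weakPDE_ms} on the Hilbert space $H^1_{\rho^\epl}(\R^d)$, so the task reduces to checking the three standard hypotheses: boundedness of $B^\epl$, coercivity of $B^\epl$, and boundedness of $F^\epl$. For boundedness of $B^\epl$, I would apply the Cauchy--Schwarz inequality in $L^2_{\rho^\epl}(\R^d)$ to each of the two terms in \eqref{eq:Be_def}, yielding $\abs{B^\epl(\varphi,\psi)} \le \max\{\sigma,\eta\} \norm{\varphi}_{H^1_{\rho^\epl}(\R^d)} \norm{\psi}_{H^1_{\rho^\epl}(\R^d)}$; the only slightly delicate point is that the constant must be independent of $\epl$, which is immediate here since $\sigma$ and $\eta$ are fixed. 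Similarly, $\abs{F^\epl(\psi)} \le \norm{f}_{L^2_{\rho^\epl}(\R^d)} \norm{\psi}_{L^2_{\rho^\epl}(\R^d)} \le \norm{f}_{L^2_{\rho^\epl}(\R^d)} \norm{\psi}_{H^1_{\rho^\epl}(\R^d)}$, so $F^\epl$ is a bounded linear functional with norm at most $\norm{f}_{L^2_{\rho^\epl}(\R^d)}$.

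The key step is coercivity, and this is where the reaction term $\eta > 0$ does its work: directly from \eqref{eq:Be_def},
\begin{equation}
B^\epl(\psi,\psi) = \sigma \int_{\R^d} \abs{\nabla \psi}^2 \rho^\epl \dd x + \eta \int_{\R^d} \psi^2 \rho^\epl \dd x \ge \min\{\sigma,\eta\} \norm{\psi}_{H^1_{\rho^\epl}(\R^d)}^2,
\end{equation}
so $B^\epl$ is coercive with constant $\min\{\sigma,\eta\}$, again independent of $\epl$. Note that no Poincaré inequality is needed here precisely because $\eta > 0$; this is the point emphasized in the remark preceding the weak formulation. With all three hypotheses verified, Lax--Milgram gives existence and uniqueness of $u^\epl \in H^1_{\rho^\epl}(\R^d)$.

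Finally, for the quantitative estimate \eqref{eq:estimatePDE_ms}, I would test the weak formulation against $\psi = u^\epl$, giving $B^\epl(u^\epl,u^\epl) = F^\epl(u^\epl)$; combining the coercivity lower bound with the bound on $F^\epl$ yields
\begin{equation}
\min\{\sigma,\eta\} \norm{u^\epl}_{H^1_{\rho^\epl}(\R^d)}^2 \le B^\epl(u^\epl,u^\epl) = F^\epl(u^\epl) \le \norm{f}_{L^2_{\rho^\epl}(\R^d)} \norm{u^\epl}_{H^1_{\rho^\epl}(\R^d)},
\end{equation}
and dividing by $\norm{u^\epl}_{H^1_{\rho^\epl}(\R^d)}$ (the case $u^\epl = 0$ being trivial) gives exactly \eqref{eq:estimatePDE_ms}. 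There is no real obstacle here: the whole argument is the textbook Lax--Milgram template, and the only feature worth stressing is that every constant ($\max\{\sigma,\eta\}$, $\min\{\sigma,\eta\}$, and the functional norm) is independent of $\epl$, which will matter for the subsequent homogenization passage to the limit. One should also confirm in passing that $H^1_{\rho^\epl}(\R^d)$ is a genuine Hilbert space (completeness), which follows from the standard weighted-Sobolev construction, and that the integration-by-parts step producing the weak form is justified for the smooth test functions that are dense in $H^1_{\rho^\epl}(\R^d)$, then extended by density.
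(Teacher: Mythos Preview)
Your proposal is correct and follows essentially the same Lax--Milgram argument as the paper: Cauchy--Schwarz for continuity of $B^\epl$ and $F^\epl$, the direct lower bound $B^\epl(\psi,\psi)\ge\min\{\sigma,\eta\}\norm{\psi}_{H^1_{\rho^\epl}(\R^d)}^2$ for coercivity, and then testing against $u^\epl$ to extract the estimate. The only cosmetic difference is that the paper records the continuity constant as $2\max\{\sigma,\eta\}$ whereas your sharper $\max\{\sigma,\eta\}$ is also valid; neither constant enters the final bound.
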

\begin{proof}
The existence and uniqueness of the solution follow from the Lax--Milgram lemma once we show the continuity and coercivity of $B^\epl$ and the continuity of $F^\epl$ defined in \eqref{eq:Be_def}. Applying the Cauchy--Schwarz inequality we obtain
\begin{equation}
\abs{B^\epl(\varphi, \psi)} \le 2 \max \{ \sigma, \eta \} \norm{\varphi}_{H^1_{\rho^\epl}(\R^d)} \norm{\psi}_{H^1_{\rho^\epl}(\R^d)},
\end{equation}
and
\begin{equation} \label{eq:Fe_continuous}
\abs{F^\epl(\psi)} \le \norm{f}_{L^2_{\rho^\epl}(\R^d)} \norm{\psi}_{H^1_{\rho^\epl}(\R^d)},
\end{equation}
which prove the continuity of $B^\epl$ and $F^\epl$.
Moreover, we also have
\begin{equation} \label{eq:Be_coercive}
B^\epl(\psi, \psi) \ge \min \{ \sigma, \eta \} \norm{\psi}^2_{H^1_{\rho^\epl}(\R^d)},
\end{equation}
which gives the coercivity of $B^\epl$. Finally, due to inequalities \eqref{eq:Fe_continuous} and \eqref{eq:Be_coercive} we deduce 
\begin{equation}
\min \{ \sigma, \eta \} \norm{u^\epl}^2_{H^1_{\rho^\epl}(\R^d)} \le B^\epl(u^\epl, u^\epl) = F^\epl(u^\epl) \le \norm{f}_{L^2_{\rho^\epl}(\R^d)} \norm{u^\epl}_{H^1_{\rho^\epl}(\R^d)}
\end{equation}
which implies estimate \eqref{eq:estimatePDE_ms} and concludes the proof.
\end{proof}

\begin{lemma} \label{lem:weakPDE_hom}
Problem \eqref{eq:weakPDE_hom} has a unique solution $u^0 \in H^1_{\rho^0}(\R^d)$ which satisfies
\begin{equation} \label{eq:estimatePDE_hom}
\norm{u^0}_{H^1_{\rho^0}(\R^d)} \le \frac{1}{\min \{ \lambda_{\mathrm{min}}(\Sigma), \eta \}} \norm{f}_{L^2_{\rho^0}(\R^d)},
\end{equation}
where $\lambda_{\mathrm{min}}(\Sigma) > 0$ is the smallest eigenvalue of the matrix $\Sigma$.
\end{lemma}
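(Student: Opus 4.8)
The plan is to mirror exactly the argument used for \cref{lem:weakPDE_ms}, since problem \eqref{eq:weakPDE_hom} has the same structure with $\sigma I$ replaced by the symmetric positive-definite matrix $\Sigma$ and $\rho^\epl$ replaced by $\rho^0$. Concretely, I would invoke the Lax--Milgram lemma for the bilinear form $B^0$ and the linear functional $F^0$ defined in \eqref{eq:B0_def}, which requires checking continuity of both and coercivity of $B^0$ on $H^1_{\rho^0}(\R^d)$.

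First I would establish continuity of $B^0$. Since $\Sigma$ is symmetric positive definite, $\abs{\Sigma \nabla \varphi(x) \cdot \nabla \psi(x)} \le \lambda_{\mathrm{max}}(\Sigma) \abs{\nabla \varphi(x)} \abs{\nabla \psi(x)}$, so an application of the Cauchy--Schwarz inequality (in the weighted $L^2_{\rho^0}$ inner product, using that $\rho^0$ is a probability density) gives
\begin{equation*}
\abs{B^0(\varphi, \psi)} \le 2 \max \{ \lambda_{\mathrm{max}}(\Sigma), \eta \} \norm{\varphi}_{H^1_{\rho^0}(\R^d)} \norm{\psi}_{H^1_{\rho^0}(\R^d)},
\end{equation*}
and likewise $\abs{F^0(\psi)} \le \norm{f}_{L^2_{\rho^0}(\R^d)} \norm{\psi}_{H^1_{\rho^0}(\R^d)}$. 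For coercivity I would use the lower bound $\Sigma \nabla \psi(x) \cdot \nabla \psi(x) \ge \lambda_{\mathrm{min}}(\Sigma) \abs{\nabla \psi(x)}^2$, which yields
\begin{equation*}
B^0(\psi, \psi) \ge \min \{ \lambda_{\mathrm{min}}(\Sigma), \eta \} \norm{\psi}^2_{H^1_{\rho^0}(\R^d)},
\end{equation*}
and this is where positive definiteness of $\Sigma$ matters, guaranteeing $\lambda_{\mathrm{min}}(\Sigma) > 0$. Lax--Milgram then gives existence and uniqueness of $u^0 \in H^1_{\rho^0}(\R^d)$.

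Finally, the a priori estimate \eqref{eq:estimatePDE_hom} follows by testing the weak formulation with $\psi = u^0$: combining the coercivity bound with the continuity of $F^0$ gives
\begin{equation*}
\min \{ \lambda_{\mathrm{min}}(\Sigma), \eta \} \norm{u^0}^2_{H^1_{\rho^0}(\R^d)} \le B^0(u^0, u^0) = F^0(u^0) \le \norm{f}_{L^2_{\rho^0}(\R^d)} \norm{u^0}_{H^1_{\rho^0}(\R^d)},
\end{equation*}
and dividing through by $\norm{u^0}_{H^1_{\rho^0}(\R^d)}$ yields the claim. There is no real obstacle here: the only point requiring slight care compared with \cref{lem:weakPDE_ms} is extracting the eigenvalue bounds for the matrix $\Sigma$ in place of the scalar $\sigma$, which is entirely routine given that $\Sigma = K\sigma$ is symmetric positive definite by \eqref{eq:K_def}.
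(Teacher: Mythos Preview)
Your proposal is correct and follows exactly the approach the paper intends: the paper omits the proof of \cref{lem:weakPDE_hom} precisely because it is the same Lax--Milgram argument as \cref{lem:weakPDE_ms} with $\sigma$ replaced by the eigenvalue bounds of $\Sigma$ and $\rho^\epl$ by $\rho^0$, which is what you have written out.
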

We omit the proof of \cref{lem:weakPDE_hom} since it follows the same argument of \cref{lem:weakPDE_ms}.

\subsection{Two-scale convergence}

We now focus on the homogenization of problem \eqref{eq:weakPDE_ms} and our strategy is based on the two-scale convergence method outlined in \cite[Chapter 9]{CiD99}. We remark that we extend this theory to the case of weighted Sobolev spaces in unbounded domains, hence also the definition of two-scale convergence has to be adapted and it is given in \cref{def:2scale_convergence}. We first introduce some preliminary results, and in the last part of this section we prove the main convergence theorem.

\begin{definition} \label{def:2scale_convergence}
A sequence of functions $\{ \varphi^\epl \}$ in $L^2_{\rho^0}(\R^d)$ is said to \emph{two-scale converge} to the limit $\varphi^0 \in L^2_{\rho^0}(\R^d \times Y)$ if for any function $\psi \in L^2_{\rho^0}(\R^d; C^0_{\mathrm{per}}(Y))$ it holds
\begin{equation}
\lim_{\epl \to 0} \int_{\R^d} \varphi^\epl(x) \psi \left( x, \frac{x}{\epl} \right) \rho^0(x) \dd x = \frac{1}{\abs{Y}} \int_{\R^d} \int_Y \varphi^0(x,y) \psi(x,y) \rho^0(x) \dd y \dd x.
\end{equation}
We then write $\varphi^\epl \twoscale \varphi^0$.
\end{definition}

\begin{remark} \label{rem:2scale_implies_weak}
From \cref{def:2scale_convergence} it follows that two-scale convergence implies weak convergence. In fact, choosing $\psi$ independent of $y$ we obtain
\begin{equation}
\varphi^\epl \toweak \frac{1}{\abs{Y}} \int_Y \varphi^0(\cdot, y) \dd y \qquad \text{in } L^2_{\rho^0}(\R^d),
\end{equation}
and if also the two-scale limit is independent of $y$ then $\varphi^\epl \toweak \varphi^0$ in $L^2_{\rho^0}(\R^d)$.
\end{remark}

The following lemmas are technical results which will be useful in the proof of next theorems. The former studies the properties of the space $L^2_{\rho^0}(\R^d; C^0_{\mathrm{per}}(Y))$ and the latter is a convergence result for two-scale functions in the same space.

\begin{lemma} \label{lem:separable_dense}
The space $L^2_{\rho^0}(\R^d; C^0_{\mathrm{per}}(Y))$ is separable and dense in $L^2_{\rho^0}(\R^d \times Y)$.
\end{lemma}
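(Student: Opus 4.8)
The plan is to establish the two assertions separately, first separability and then density, in both cases reducing to classical facts about separability/density of tensor-product spaces by exploiting the structure $L^2_{\rho^0}(\R^d;C^0_{\mathrm{per}}(Y))$ as a Bochner space of functions valued in the separable Banach space $C^0_{\mathrm{per}}(Y)$. For separability, I would first recall that $C^0_{\mathrm{per}}(Y)=C^0(\T^d_L)$ is separable (it is $C^0$ of a compact metric space, so separability follows from the Stone--Weierstrass theorem: trigonometric polynomials with rational coefficients form a countable dense set), and that $L^2_{\rho^0}(\R^d)$ is separable (the measure $\rho^0(x)\dd x$ is $\sigma$-finite and Borel on $\R^d$, so $L^2$ of it is separable). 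Then the Bochner space $L^2_{\rho^0}(\R^d;X)$ with $X$ separable is itself separable; concretely, finite sums $\sum_k \mathbbm{1}_{A_k}(x)\, g_k(y)$ with $A_k$ ranging over a countable generating algebra of Borel sets of finite $\rho^0$-measure and $g_k$ over a countable dense subset of $C^0_{\mathrm{per}}(Y)$ form a countable dense family. The one point requiring a little care is that the norm on this space involves $\sup_{y}|u(x,y)|$ inside the $L^2_{\rho^0}$-integral, i.e. it is genuinely the Bochner $L^2$-norm for the sup-norm on the fibre, so the standard Bochner-space separability argument applies verbatim.

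For density of $L^2_{\rho^0}(\R^d;C^0_{\mathrm{per}}(Y))$ in $L^2_{\rho^0}(\R^d\times Y)$, I would argue by a two-stage approximation. Observe that $L^2_{\rho^0}(\R^d\times Y)=L^2(\R^d\times Y,\ \rho^0(x)\dd x\,\dd y)$, and that simple functions of product form, i.e. finite linear combinations $\sum_k a_k(x) b_k(y)$ with $a_k\in L^2_{\rho^0}(\R^d)$ and $b_k\in L^2(Y)$, are dense there (this is the standard density of the algebraic tensor product $L^2_{\rho^0}(\R^d)\otimes L^2(Y)$ in the $L^2$ of the product measure; it follows, e.g., by approximating indicator functions of measurable rectangles and then general sets via a monotone-class/Dynkin argument). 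It therefore suffices to approximate each such product $a(x)b(y)$ by an element of $L^2_{\rho^0}(\R^d;C^0_{\mathrm{per}}(Y))$. For this, fix $a\in L^2_{\rho^0}(\R^d)$ and, given $b\in L^2(Y)$ and $\delta>0$, pick $b_\delta\in C^0_{\mathrm{per}}(Y)$ with $\|b-b_\delta\|_{L^2(Y)}<\delta$ (continuous periodic functions are dense in $L^2(Y)$). Then $x\mapsto a(x)\,b_\delta(\cdot)$ defines an element of $L^2_{\rho^0}(\R^d;C^0_{\mathrm{per}}(Y))$ with norm $\|a\|_{L^2_{\rho^0}}\|b_\delta\|_{C^0_{\mathrm{per}}(Y)}$, and one estimates
\begin{equation}
\norm{a\otimes b-a\otimes b_\delta}_{L^2_{\rho^0}(\R^d\times Y)}^2=\norm{a}_{L^2_{\rho^0}(\R^d)}^2\,\norm{b-b_\delta}_{L^2(Y)}^2<\delta^2\norm{a}_{L^2_{\rho^0}(\R^d)}^2,
\end{equation}
so the approximation error is controlled. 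Summing over the finitely many terms in the tensor-product approximation and choosing $\delta$ small enough relative to $\max_k\|a_k\|_{L^2_{\rho^0}(\R^d)}$ yields an element of $L^2_{\rho^0}(\R^d;C^0_{\mathrm{per}}(Y))$ within any prescribed distance of the target, proving density.

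The main obstacle is not any single estimate — each step is routine — but rather the bookkeeping around the two different fibre norms: the target space $L^2_{\rho^0}(\R^d\times Y)$ carries the $L^2(Y)$ norm on the $y$-fibre, whereas $L^2_{\rho^0}(\R^d;C^0_{\mathrm{per}}(Y))$ carries the sup norm, and one must check that the natural inclusion is well defined (it is, since $\|u(x)\|_{L^2(Y)}\le |Y|^{1/2}\|u(x)\|_{C^0_{\mathrm{per}}(Y)}$, so $L^2_{\rho^0}(\R^d;C^0_{\mathrm{per}}(Y))$ does embed continuously into $L^2_{\rho^0}(\R^d\times Y)$) before density even makes sense. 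Once that embedding is in place, the argument above goes through, and I would also remark that the separability of $L^2_{\rho^0}(\R^d\times Y)$ then follows as a corollary, since a separable space has a separable dense subspace. A minor point to verify along the way is that $\rho^0$, being a bounded smooth positive density (see \eqref{eq:rho_hom}), makes $L^2_{\rho^0}(\R^d)$ a genuine separable Hilbert space — this is immediate since $\rho^0(x)\dd x$ is a finite Borel measure on the second-countable space $\R^d$.
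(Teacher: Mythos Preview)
Your argument is correct and covers both assertions. It differs from the paper's proof in presentation rather than substance, but the differences are worth noting. For separability, the paper avoids working in the weighted space directly: it cites the standard fact that $L^2(\R^d;C^0_{\mathrm{per}}(Y))$ is separable whenever the target is, and then transports this to the weighted space via the isometric isomorphism $u\mapsto\sqrt{\rho^0}\,u$ from $L^2_{\rho^0}(\R^d;C^0_{\mathrm{per}}(Y))$ onto $L^2(\R^d;C^0_{\mathrm{per}}(Y))$. Your route builds a countable dense family directly in the weighted Bochner space; this is equally valid and a bit more self-contained, while the paper's isomorphism trick is shorter and sidesteps any discussion of the measure $\rho^0\dd x$. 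For density, the paper argues at the Bochner level throughout: from $\mathcal D(Y)$ dense in $L^2(Y)$ it infers $L^2_{\rho^0}(\R^d;\mathcal D(Y))$ dense in $L^2_{\rho^0}(\R^d;L^2(Y))=L^2_{\rho^0}(\R^d\times Y)$, and then observes $L^2_{\rho^0}(\R^d;\mathcal D(Y))\subset L^2_{\rho^0}(\R^d;C^0_{\mathrm{per}}(Y))$. You instead pass through the algebraic tensor product $L^2_{\rho^0}(\R^d)\otimes L^2(Y)$ and approximate only the $y$-factor by continuous periodic functions. Both routes ultimately rest on the density of continuous periodic functions in $L^2(Y)$; yours is more explicit about the two-stage approximation, and your remark on the continuous embedding (via $\|u(x)\|_{L^2(Y)}\le |Y|^{1/2}\|u(x)\|_{C^0_{\mathrm{per}}(Y)}$) makes precise a point the paper leaves implicit.
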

\begin{proof}
Since the space $C^0_{\mathrm{per}}(Y)$ is separable, then by \cite[Proposition 3.55]{CiD99} it follows that the space $L^2(\R^d; C^0_{\mathrm{per}}(Y))$ is separable. Moreover, $L^2(\R^d; C^0_{\mathrm{per}}(Y))$ is isomorphic to $L^2_{\rho^0}(\R^d; C^0_{\mathrm{per}}(Y))$ through the isomorphism
\begin{equation}
T \colon L^2_{\rho^0}(\R^d; C^0_{\mathrm{per}}(Y)) \to L^2(\R^d; C^0_{\mathrm{per}}(Y)), \qquad u \mapsto T(u) = \sqrt{\rho^0} u,
\end{equation}
and thus the space $L^2_{\rho^0}(\R^d; C^0_{\mathrm{per}}(Y))$ is separable as well. Concerning the density result, since $\mathcal D(Y)$ is dense in $L^2(Y)$, then $L^2_{\rho^0}(\R^d; \mathcal D(Y))$ is dense in $L^2_{\rho^0}(\R^d; L^2(Y))$. Finally, the property that $L^2_{\rho^0}(\R^d; C^0_{\mathrm{per}}(Y))$ is dense in  $L^2_{\rho^0}(\R^d \times Y)$ follows from the inclusion $L^2_{\rho^0}(\R^d; \mathcal D(Y)) \subset L^2_{\rho^0}(\R^d; C^0_{\mathrm{per}}(Y))$ and the fact that $L^2_{\rho^0}(\R^d; L^2(Y)) = L^2_{\rho^0}(\R^d \times Y)$. 
\end{proof}

\begin{lemma} \label{lem:convergenceL2}
Let $\psi \in L^2_{\rho^0}(\R^d; C^0_{\mathrm{per}}(Y))$. Then
\begin{equation} \label{eq:formula_twoscale}
\lim_{\epl \to 0} \int_{\R^d} \psi \left( x, \frac{x}{\epl} \right)^2 \rho^0(x) \dd x = \frac{1}{\abs{Y}} \int_{\R^d} \int_Y \psi(x,y)^2 \rho^0(x) \dd y \dd x.
\end{equation}
\end{lemma}
\begin{proof}
The proof follows the same steps of the proof of Lemma 5.2 in \cite{All92}, where the spaces $L^1(\Omega)$ and $L^1(\Omega; C^0_{\mathrm{per}}(Y))$ are replaced by $L^2_{\rho^0}(\R^d)$ and $L^2_{\rho^0}(\R^d; C^0_{\mathrm{per}}(Y))$, respectively. Accordingly, the integrals $\int_{\Omega} v(x) \dd x$ for a function $v = v(x)$ are replaced by $\int_{\R^d} v(x) \rho^0(x) \dd x$. Therefore, we only give here a sketch of the proof. For any positive integer $n$ we introduce a paving of the cube $Y=[0,L]^d$ made of small cubes $Y_i$ with $i = 1,\dots,n^d$, such that
\begin{equation}
Y = \bigcup_{i=1}^{n^d} Y_i, \qquad \abs{Y_i} = \left( \frac{L}{n} \right)^d, \qquad \text{and} \qquad \abs{Y_i \cap Y_j} = 0 \text{ if } i \neq j.
\end{equation}
Moreover, let $\chi_i$ be the characteristic function of the set $Y_i$ extended by $Y$-periodicity to $\R^d$, let $y_i \in Y_i$, and approximate the function $\psi$ by a sequence of functions $\{ \psi_n \}$ defined by
\begin{equation}
\psi_n(x,y) = \sum_{i=1}^{n^d} \psi(x,y_i) \chi_i(y).
\end{equation}
We remark that it can be shown that $\psi_n \to \psi$ as $n\to\infty$ in $L^2_{\rho^0}(\R^d; C^0_{\mathrm{per}}(Y))$. We first prove that equation \eqref{eq:formula_twoscale} holds true for $\psi_n$. By a classical result for periodic oscillating functions (see, e.g., \cite[Theorem 2.6]{CiD99}) we have
\begin{equation} \label{eq:formula_twoscale_n}
\begin{aligned}
\lim_{\epl \to 0} \int_{\R^d} \psi_n \left( x, \frac{x}{\epl} \right)^2 \rho^0(x) \dd x &= \sum_{i=1}^{n^d} \lim_{\epl \to 0} \int_{\R^d} \psi(x,y_i)^2 \chi_i \left( \frac{x}{\epl} \right) \rho^0(x) \dd x \\
&= \frac1{\abs{Y}} \sum_{i=1}^{n^d} \int_{\R^d} \int_Y \psi(x,y_i)^2 \chi_i(y) \rho^0(x) \dd y \dd x \\
&= \frac1{\abs{Y}} \int_{\R^d} \int_Y \psi_n(x,y)^2 \rho^0(x) \dd y \dd x.
\end{aligned}
\end{equation}
We can now move to the general case. By the triangle inequality we have
\begin{equation}
\begin{aligned}
&\abs{\int_{\R^d} \psi \left( x, \frac{x}{\epl} \right)^2 \rho^0(x) \dd x - \frac{1}{\abs{Y}} \int_{\R^d} \int_Y \psi(x,y)^2 \rho^0(x) \dd y \dd x} \\
&\hspace{2cm} \le \abs{\int_{\R^d} \psi \left( x, \frac{x}{\epl} \right)^2 \rho^0(x) \dd x - \int_{\R^d} \psi_n \left( x, \frac{x}{\epl} \right)^2 \rho^0(x) \dd x} \\
&\hspace{2cm}\quad + \abs{\int_{\R^d} \psi_n \left( x, \frac{x}{\epl} \right)^2 \rho^0(x) \dd x - \frac{1}{\abs{Y}} \int_{\R^d} \int_Y \psi_n(x,y)^2 \rho^0(x) \dd y \dd x} \\
&\hspace{2cm}\quad + \abs{\frac{1}{\abs{Y}} \int_{\R^d} \int_Y \psi_n(x,y)^2 \rho^0(x) \dd y \dd x - \frac{1}{\abs{Y}} \int_{\R^d} \int_Y \psi(x,y)^2 \rho^0(x) \dd y \dd x},
\end{aligned}
\end{equation}
which due to equation \eqref{eq:formula_twoscale_n} implies
\begin{equation} \label{eq:formula_twoscale_final}
\begin{aligned}
&\limsup_{\epl \to 0} \abs{\int_{\R^d} \psi \left( x, \frac{x}{\epl} \right)^2 \rho^0(x) \dd x - \frac{1}{\abs{Y}} \int_{\R^d} \int_Y \psi(x,y)^2 \rho^0(x) \dd y \dd x} \\
&\hspace{1.5cm} \le \left( 1 + \frac{1}{\abs{Y}} \right) \left( \norm{\psi_n}_{L^2_{\rho^0}(\R^d; C^0_{\mathrm{per}}(Y))} + \norm{\psi}_{L^2_{\rho^0}(\R^d; C^0_{\mathrm{per}}(Y))} \right) \norm{\psi_n - \psi}_{L^2_{\rho^0}(\R^d; C^0_{\mathrm{per}}(Y))}.
\end{aligned}
\end{equation}
Since equation \eqref{eq:formula_twoscale_final} holds for all positive integer $n$, passing to the limit as $n\to\infty$ we obtain the desired result.
\end{proof}

The following propositions are compactness results in the spaces $L^2_{\rho^0}(\R^d)$ and $H^1_{\rho^0}(\R^d)$, respectively, which highlight the importance of the notion of two-scale convergence and thus justify the introduction of \cref{def:2scale_convergence}. The proof of \cref{pro:subsequence_2scale_grad} is based on the proof of Theorem 9.9 in \cite{CiD99}.

\begin{proposition} \label{pro:subsequence_2scale}
Let $\{ \varphi^\epl \}$ be a bounded sequence in $L^2_{\rho^0}(\R^d)$. Then, there exist a subsequence $\{ \varphi^{\epl'} \}$ and a function $\varphi^0 \in L^2_{\rho^0}(\R^d \times Y)$ such that
\begin{equation}
\varphi^{\epl'} \twoscale \varphi^0.
\end{equation}
\end{proposition}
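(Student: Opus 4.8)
The plan is to mimic the classical proof of the two-scale compactness theorem (Theorem 9.7 in \cite{CiD99} or Theorem 1.2 in \cite{All92}), carefully transplanting it to the weighted space $L^2_{\rho^0}(\R^d)$, exploiting that $\rho^0$ is a fixed, $\epl$-independent weight so that the linear-functional machinery goes through verbatim. First I would fix, for each $\epl$, the linear functional
\begin{equation}
\mu^\epl(\psi) \defeq \int_{\R^d} \varphi^\epl(x)\, \psi\!\left(x,\frac{x}{\epl}\right) \rho^0(x)\dd x, \qquad \psi \in L^2_{\rho^0}(\R^d; C^0_{\mathrm{per}}(Y)).
\end{equation}
Using the Cauchy--Schwarz inequality in $L^2_{\rho^0}(\R^d)$ together with the trivial bound $\psi(x,x/\epl)^2 \le \sup_{y\in Y}\abs{\psi(x,y)}^2$, one gets $\abs{\mu^\epl(\psi)} \le \norm{\varphi^\epl}_{L^2_{\rho^0}(\R^d)} \norm{\psi}_{L^2_{\rho^0}(\R^d; C^0_{\mathrm{per}}(Y))} \le C \norm{\psi}_{L^2_{\rho^0}(\R^d; C^0_{\mathrm{per}}(Y))}$, where $C$ is the uniform bound on the sequence. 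Thus $\{\mu^\epl\}$ is bounded in the dual of the separable Banach space $L^2_{\rho^0}(\R^d; C^0_{\mathrm{per}}(Y))$ (separability being exactly \cref{lem:separable_dense}), so by the sequential Banach--Alaoglu theorem there is a subsequence $\{\epl'\}$ and a functional $\mu^0$ with $\mu^{\epl'}(\psi) \to \mu^0(\psi)$ for every such $\psi$.

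Next I would identify $\mu^0$ with an element of $L^2_{\rho^0}(\R^d\times Y)$. The key estimate is passing to the limit in $\abs{\mu^{\epl'}(\psi)} \le C\,\norm{\psi(\cdot,\cdot/\epl')}_{L^2_{\rho^0}(\R^d)}$ using the preceding lemma (equation \eqref{eq:formula_twoscale}), which gives
\begin{equation}
\abs{\mu^0(\psi)} \le C \left( \frac{1}{\abs{Y}} \int_{\R^d}\int_Y \psi(x,y)^2 \rho^0(x)\dd y\dd x \right)^{1/2}.
\end{equation}
Hence $\mu^0$ extends to a bounded linear functional on the dense subspace $L^2_{\rho^0}(\R^d; C^0_{\mathrm{per}}(Y))$ of the Hilbert space $L^2_{\rho^0}(\R^d\times Y)$ (density is again \cref{lem:separable_dense}), so it extends uniquely to all of $L^2_{\rho^0}(\R^d\times Y)$, and by the Riesz representation theorem there is $\varphi^0 \in L^2_{\rho^0}(\R^d\times Y)$ with $\mu^0(\psi) = \abs{Y}^{-1}\int_{\R^d}\int_Y \varphi^0(x,y)\psi(x,y)\rho^0(x)\dd y\dd x$. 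Unwinding the definitions, this is precisely $\varphi^{\epl'} \twoscale \varphi^0$.

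The main obstacle, and the only place where the weighted/unbounded setting requires genuine care rather than a literal copy, is the auxiliary limit \eqref{eq:formula_twoscale}: it supplies the bound controlling $\mu^0$ by the $L^2_{\rho^0}(\R^d\times Y)$ norm, which is what makes the Riesz representation step available — but that lemma is already proved in the excerpt (as a transcription of Lemma 5.2 in \cite{All92} with $\dd x$ replaced by $\rho^0\dd x$), so I may invoke it directly. Beyond that, the argument is insensitive to the domain being $\R^d$ rather than a bounded $\Omega$, precisely because the weight $\rho^0$ is integrable and $\epl$-independent, so no tightness or truncation argument is needed.
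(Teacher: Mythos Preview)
Your proposal is correct and is precisely the approach the paper takes: the paper's proof consists of nothing more than the instruction to follow Theorem~9.7 of \cite{CiD99} with the substitutions $L^2(\Omega)\leadsto L^2_{\rho^0}(\R^d)$, Proposition~3.61 $\leadsto$ \cref{lem:separable_dense}, and equation~(9.2) $\leadsto$ \eqref{eq:formula_twoscale}, which is exactly the argument you have spelled out (bounded functionals on a separable Banach space, Banach--Alaoglu, the limit estimate via \eqref{eq:formula_twoscale}, density, Riesz). You have simply unpacked what the paper leaves implicit.
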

\begin{proof}
The proof follows the same steps of the proof of Theorem 9.7 in \cite{CiD99}, where Proposition 3.61, equation (9.2) and the spaces $L^2(\Omega)$, $L^2(\Omega \times Y)$, $L^2(\Omega; C^0_{\mathrm{per}}(Y))$ are replaced by \cref{lem:separable_dense}, equation \eqref{eq:formula_twoscale} and $L^2_{\rho^0}(\R^d)$, $L^2_{\rho^0}(\R^d \times Y)$, $L^2_{\rho^0}(\R^d; C^0_{\mathrm{per}}(Y))$, respectively. Accordingly, the integrals $\int_{\Omega} v(x) \dd x$ for a function $v = v(x)$ are replaced by $\int_{\R^d} v(x) \rho^0(x) \dd x$. Therefore, we only give here a sketch of the proof. Let $\psi \in L^2_{\rho^0}(\R^d; C^0_{\mathrm{per}}(Y))$ and notice that by Cauchy--Schwarz inequality we have
\begin{equation}
\begin{aligned}
\abs{\int_{\R^d} \varphi^\epl(x) \psi \left( x, \frac{x}{\epl} \right) \rho^0(x) \dd x} &\le \left( \int_{\R^d} \varphi^\epl(x)^2 \rho^0(x) \dd x \right)^{1/2} \left( \int_{\R^d} \psi \left( x, \frac{x}{\epl} \right)^2 \rho^0(x) \dd x \right)^{1/2} \\
&\le C \norm{\psi}_{L^2_{\rho^0}(\R^d; C^0_{\mathrm{per}}(Y))},
\end{aligned}
\end{equation}
which means that $\varphi^\epl$ can be seen as an element $\zeta^\epl$ of the dual space $[L^2_{\rho^0}(\R^d; C^0_{\mathrm{per}}(Y))]'$ such that
\begin{equation}
\norm{\zeta^\epl}_{[L^2_{\rho^0}(\R^d; C^0_{\mathrm{per}}(Y))]'} \le C.
\end{equation}
Hence, since $L^2_{\rho^0}(\R^d; C^0_{\mathrm{per}}(Y))$ is separable due to \cref{lem:separable_dense}, by \cite[Theorem 1.26]{CiD99} we can extract a subsequence $\{ \zeta^{\epl'} \}$ such that
\begin{equation}
\zeta^{\epl'} \toweak^* \zeta^0 \qquad \text{in } [L^2_{\rho^0}(\R^d; C^0_{\mathrm{per}}(Y))]',
\end{equation}
which means that
\begin{equation} \label{eq:definitionZeta0}
\inprod{\zeta^0}{\psi}_{([L^2_{\rho^0}(\R^d; C^0_{\mathrm{per}}(Y))]';L^2_{\rho^0}(\R^d; C^0_{\mathrm{per}}(Y))} = \lim_{\epl' \to 0} \int_{\R^d} \varphi^{\epl'}(x) \psi \left( x, \frac{x}{\epl'} \right) \rho^0(x) \dd x.
\end{equation}
Moreover, by Cauchy--Schwarz inequality and \cref{lem:convergenceL2} we obtain
\begin{equation}
\begin{aligned}
\abs{\inprod{\zeta^0}{\psi}_{([L^2_{\rho^0}(\R^d; C^0_{\mathrm{per}}(Y))]';L^2_{\rho^0}(\R^d; C^0_{\mathrm{per}}(Y))}} &\le C \lim_{\epl' \to 0} \left( \int_{\R^d} \psi \left( x, \frac{x}{\epl'} \right)^2 \rho^0(x) \dd x \right)^{1/2} \\
&= \frac{C}{\abs{Y}^{1/2}} \norm{\psi}_{L^2_{\rho^0}(\R^d \times Y)},
\end{aligned}
\end{equation}
which holds for all $\psi \in L^2_{\rho^0}(\R^d \times Y)$ since $L^2_{\rho^0}(\R^d; C^0_{\mathrm{per}}(Y))$ is dense in $L^2_{\rho^0}(\R^d \times Y)$ due to \cref{lem:separable_dense}. Therefore, $\zeta^0$ can be extended continuously to $L^2_{\rho^0}(\R^d \times Y)$ and by the representation theorem \cite[Theorem 1.36]{CiD99} it can be identified with an element $\varphi \in L^2_{\rho^0}(\R^d \times Y)$, which together with equation \eqref{eq:definitionZeta0} yields
\begin{equation}
\lim_{\epl' \to 0} \int_{\R^d} \varphi^{\epl'}(x) \psi \left( x, \frac{x}{\epl'} \right) \rho^0(x) \dd x = \int_{\R^d} \int_Y \varphi(x,y) \psi(x,y) \rho^0(x) \dd y \dd x,
\end{equation}
which shows that $\varphi^{\epl'} \twoscale \abs{Y} \varphi \eqdef \varphi^0 \in L^2_{\rho^0}(\R^d \times Y)$ and concludes the proof.
\end{proof}

\begin{proposition} \label{pro:subsequence_2scale_grad}
Let $\{ \varphi^\epl \}$ be a sequence of functions in $H^1_{\rho^0}(\R^d)$ such that
\begin{equation} \label{eq:convergence_assumption}
\varphi^\epl \toweak \varphi^0 \qquad \text{in } H^1_{\rho^0}(\R^d).
\end{equation}
Then, $\varphi^\epl \twoscale \varphi^0$ and there exist a subsequence $\{ \varphi^{\epl'} \}$ and $\varphi_1 \in L^2_{\rho^0}(\R^d; \mathcal W_{\mathrm{per}}(Y))$ such that
\begin{equation}
\nabla \varphi^{\epl'} \twoscale \nabla \varphi^0 + \nabla_y \varphi_1.
\end{equation}
\end{proposition}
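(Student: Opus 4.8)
The plan is to follow the classical compactness argument for two-scale limits of gradients (the proof of Theorem 9.9 in \cite{CiD99}), while carefully tracking the extra terms that the weight $\rho^0$ produces on integration by parts. The one structural fact I would use throughout is that, since $\trace(\Sigma^{-1}K)/d = 1/\sigma$, the density is $\rho^0(x) = C_{\rho^0}^{-1}e^{-V(x)/\sigma}$, so $\nabla\rho^0 = -\tfrac1\sigma(\nabla V)\rho^0$, and $\nabla V$ is continuous by \cref{ass:dissipativity}, hence bounded on compact sets. Concretely, since $\varphi^\epl\toweak\varphi^0$ in $H^1_{\rho^0}(\R^d)$ by \eqref{eq:convergence_assumption}, both $\{\varphi^\epl\}$ and $\{\nabla\varphi^\epl\}$ are bounded in $L^2_{\rho^0}(\R^d)$, so applying \cref{pro:subsequence_2scale} (componentwise to the gradients) I would extract a subsequence $\{\varphi^{\epl'}\}$, a limit $\xi^0\in L^2_{\rho^0}(\R^d\times Y)$ and a field $\chi^0\in L^2_{\rho^0}(\R^d\times Y)^d$ with $\varphi^{\epl'}\twoscale\xi^0$ and $\nabla\varphi^{\epl'}\twoscale\chi^0$; \cref{rem:2scale_implies_weak} then yields $\abs{Y}^{-1}\int_Y\xi^0(\cdot,y)\dd y = \varphi^0$.

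\emph{Identifying $\xi^0=\varphi^0$.} For $\psi\in\mathcal D(\R^d;C^\infty_{\mathrm{per}}(Y))^d$ I would pair $\nabla\varphi^{\epl'}$ with $\epl'\psi(x,x/\epl')\rho^0(x)$ and integrate by parts. The chain rule produces on the right-hand side a term with $(\divergence_x\psi)$, a term with $(\divergence_y\psi)$, and a term with $\nabla\rho^0 = -\tfrac1\sigma(\nabla V)\rho^0$; the first and the third carry a factor $\epl'$ and are $O(\epl')$ (here the compact support of $\psi$ in $x$ makes $\nabla V$ bounded there, and \eqref{eq:formula_twoscale} bounds $\norm{\psi(\cdot,\cdot/\epl')}_{L^2_{\rho^0}(\R^d)}$), and so is the left-hand side. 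Passing to the limit leaves $\int_{\R^d}\int_Y\xi^0(x,y)(\divergence_y\psi)(x,y)\rho^0(x)\dd y\dd x = 0$; running this over products $\phi(x)\theta(y)$ with $\phi\in\mathcal D(\R^d)$ and $\theta\in C^\infty_{\mathrm{per}}(Y)^d$ shows $\nabla_y\xi^0=0$, so $\xi^0$ is independent of $y$, and with the mean-value relation above this forces $\xi^0=\varphi^0$. Hence $\varphi^{\epl'}\twoscale\varphi^0$, and since this limit is subsequence-independent the whole sequence satisfies $\varphi^\epl\twoscale\varphi^0$.

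\emph{Identifying $\chi^0$.} Next I would take $\psi\in\mathcal D(\R^d;C^\infty_{\mathrm{per}}(Y))^d$ with $\divergence_y\psi\equiv0$, pair $\nabla\varphi^{\epl'}$ with $\psi(x,x/\epl')\rho^0(x)$ (this time without the factor $\epl'$) and integrate by parts: the potentially singular term $\tfrac1{\epl'}(\divergence_y\psi)$ drops out, leaving only $(\divergence_x\psi)\rho^0$ and $\psi\cdot\nabla\rho^0$. Letting $\epl'\to0$, using $\varphi^{\epl'}\twoscale\varphi^0$ and the fact that $\psi\cdot\nabla\rho^0 = -\tfrac1\sigma(\psi\cdot\nabla V)\rho^0$ is an admissible test function (again because $\psi$ is compactly supported in $x$), I would obtain, with $\Psi(x)\defeq\abs{Y}^{-1}\int_Y\psi(x,y)\dd y$,
\[
\frac1{\abs{Y}}\int_{\R^d}\int_Y\chi^0\cdot\psi\,\rho^0\dd y\dd x = -\int_{\R^d}\varphi^0\,\divergence_x(\Psi\rho^0)\dd x = \int_{\R^d}\nabla\varphi^0\cdot\Psi\,\rho^0\dd x = \frac1{\abs{Y}}\int_{\R^d}\int_Y\nabla\varphi^0\cdot\psi\,\rho^0\dd y\dd x,
\]
the middle equality being integration by parts valid for $\varphi^0\in H^1_{\rho^0}(\R^d)$. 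Hence $\chi^0-\nabla\varphi^0$ is orthogonal in $L^2_{\rho^0}(\R^d\times Y)^d$ to every divergence-free $\psi$ of this type. For a.e.\ fixed $x$, the $Y$-periodic $L^2$ fields orthogonal to all smooth divergence-free periodic fields are exactly the gradients $\nabla_y w$ with $w\in H^1_{\mathrm{per}}(Y)$; this produces $\varphi_1(x,\cdot)\in\mathcal W_{\mathrm{per}}(Y)$ with $\chi^0(x,y)=\nabla\varphi^0(x)+\nabla_y\varphi_1(x,y)$, and the pointwise bound $\norm{\nabla_y\varphi_1(x,\cdot)}_{L^2(Y)}\le\norm{\chi^0(x,\cdot)}_{L^2(Y)}+\abs{Y}^{1/2}\abs{\nabla\varphi^0(x)}$, together with measurable dependence of this decomposition on $x$, gives $\varphi_1\in L^2_{\rho^0}(\R^d;\mathcal W_{\mathrm{per}}(Y))$.

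\emph{Expected main obstacle.} The genuinely new difficulty relative to the classical unweighted theory on a bounded domain is that every integration by parts spawns a term proportional to $\nabla\rho^0$, which is \emph{not} bounded relative to $\rho^0$ (it grows like $\nabla V$). This is what forces all the integration-by-parts identities to be established first for test functions with compact support in $x$ — where $\nabla V$ is bounded — and then extended by density; the density needed is that of $\mathcal D(\R^d;C^\infty_{\mathrm{per}}(Y))$, and of its divergence-free subspace, in the appropriate weighted test spaces, in the spirit of \cref{lem:separable_dense}. The remaining technical point, extracting $\varphi_1$ with the correct measurability and $x$-integrability from the pointwise Helmholtz/Weyl decomposition, I would handle exactly as in \cite{CiD99}.
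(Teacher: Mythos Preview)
Your proposal is correct and follows the paper's argument almost step for step: extract two-scale limits via \cref{pro:subsequence_2scale}, integrate by parts against $\epl\psi(x,x/\epl)\rho^0$ to see that the scalar limit is $y$-independent (hence equal to $\varphi^0$), then integrate by parts against $\psi(x,x/\epl)\rho^0$ with $\divergence_y\psi=0$ to show that $\chi^0-\nabla\varphi^0$ is $L^2_{\rho^0}$-orthogonal to all such $\psi$. Your discussion of the weight term $\nabla\rho^0=-\tfrac1\sigma(\nabla V)\rho^0$ and the need to work first with $\psi\in\mathcal D(\R^d;C^\infty_{\mathrm{per}}(Y))^d$ exactly matches what the paper does.

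The one genuine difference is in the final extraction of $\varphi_1$. You localize in $x$ (via tensor-product test functions and separability) and invoke the periodic Helmholtz decomposition on $Y$ pointwise, then argue measurability and $L^2_{\rho^0}$-integrability of the resulting map $x\mapsto\varphi_1(x,\cdot)$. The paper instead makes the substitution $\widetilde\psi=\sqrt{\rho^0}\,\psi$, which is a bijection on the divergence-free test space (since $\rho^0$ is smooth, strictly positive, and $y$-independent), thereby reducing the weighted orthogonality relation to an \emph{unweighted} one for $\sqrt{\rho^0}\,(\chi^0-\nabla\varphi^0)\in L^2(\R^d\times Y)^d$; a classical result then produces $\widetilde\varphi_1\in L^2(\R^d;\mathcal W_{\mathrm{per}}(Y))$ directly, and one sets $\varphi_1=\widetilde\varphi_1/\sqrt{\rho^0}$. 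Your route is slightly more hands-on but avoids citing the product-space result; the paper's substitution is cleaner and sidesteps the measurability bookkeeping entirely.
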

\begin{proof}
By \cref{pro:subsequence_2scale}, there exists a subsequence (still denoted by $\epl$) such that 
\begin{equation} \label{eq:2scale_convergence_f_df}
\varphi^\epl \twoscale \varphi \in L^2_{\rho^0}(\R^d \times Y) \qquad \text{and} \qquad \nabla \varphi^\epl \twoscale \Xi \in (L^2_{\rho^0}(\R^d \times Y))^d.
\end{equation}
Letting $\psi \in (\mathcal D(\R^d; C^\infty_{\mathrm{per}}(Y)))^d$ and integrating by parts we have
\begin{equation}
\begin{aligned}
\int_{\R^d} \nabla \varphi^\epl(x) \cdot \psi \left( x, \frac{x}{\epl} \right) \rho^0(x) \dd x &= - \int_{\R^d} \varphi^\epl(x) \left[ \divergence_x \psi \left( x, \frac{x}{\epl} \right) + \frac{1}{\epl} \divergence_y \psi \left( x, \frac{x}{\epl} \right) \right] \rho^0(x) \dd x \\
&\quad + \frac1\sigma \int_{\R^d} \varphi^\epl(x) \psi \left( x, \frac{x}{\epl} \right) \cdot \nabla V(x) \rho^0(x) \dd x,
\end{aligned}
\end{equation}
which implies 
\begin{equation}
\begin{aligned}
\int_{\R^d} \varphi^\epl(x) \divergence_y \psi \left( x, \frac{x}{\epl} \right) \rho^0(x) \dd x &= \epl \int_{\R^d} \varphi^\epl(x) \left[ \frac1\sigma \psi \left( x, \frac{x}{\epl} \right) \cdot \nabla V(x) - \divergence_x \psi \left( x, \frac{x}{\epl} \right) \right] \rho^0(x) \dd x \\
&\quad - \epl \int_{\R^d} \nabla \varphi^\epl(x) \cdot \psi \left( x, \frac{x}{\epl} \right) \rho^0(x) \dd x.
\end{aligned}
\end{equation}
Passing to the limit as $\epl \to 0$ and due to \eqref{eq:2scale_convergence_f_df} we obtain
\begin{equation}
\frac{1}{\abs{Y}} \int_{\R^d} \int_Y \varphi(x,y) \divergence_y \psi (x,y) \rho^0(x) \dd y \dd x = 0,
\end{equation}
which yields for all $\psi \in (\mathcal D(\R^d \times Y))^d$
\begin{equation}
\int_{\R^d} \int_Y \nabla_y \varphi(x,y) \cdot \psi(x,y) \rho^0(x) \dd y \dd x = 0.
\end{equation}
Hence, by \cite[Theorem 1.44]{CiD99} and since $\rho^0(x) > 0$ for all $x \in \R^d$ we get
\begin{equation}
\nabla_y \varphi = 0 \qquad a.e. \text{ on } \R^d \times Y.
\end{equation}
Therefore, from \cite[Proposition 3.38]{CiD99} with $\Omega$ replaced by $Y$ and $x$ fixed we deduce that $\varphi$ does not depend on $y$ and due to \cref{rem:2scale_implies_weak} and assumption \eqref{eq:convergence_assumption} this implies that $\varphi = \varphi^0 \in H^1_{\rho^0}(\R^d)$. Let now $\psi \in (\mathcal D(\R^d; C^\infty_{\mathrm{per}}(Y)))^d$ such that $\divergence_y \psi = 0$. Integrating by parts and by \eqref{eq:2scale_convergence_f_df} we obtain
\begin{equation}
\begin{aligned}
\lim_{\epl \to 0} \int_{\R^d} \nabla \varphi^\epl(x) \cdot \psi \left( x, \frac{x}{\epl} \right) \rho^0(x) \dd x &= \lim_{\epl \to 0} \int_{\R^d} \varphi^\epl(x) \left[ \frac1\sigma \psi \left( x, \frac{x}{\epl} \right) \cdot \nabla V(x) - \divergence_x \psi \left( x, \frac{x}{\epl} \right) \right] \rho^0(x) \dd x \\
&\hspace{-0.4cm}= \frac{1}{\abs{Y}} \int_{\R^d} \int_Y \varphi^0(x) \left[ \frac1\sigma \psi(x,y) \cdot \nabla V(x) - \divergence_x \psi(x,y) \right] \rho^0(x) \dd y \dd x \\
&\hspace{-0.4cm}= \frac{1}{\abs{Y}} \int_{\R^d} \int_Y \nabla \varphi^0(x) \cdot \psi(x,y) \rho^0(x) \dd y \dd x.
\end{aligned}
\end{equation}
Due to \eqref{eq:2scale_convergence_f_df} we also have
\begin{equation}
\lim_{\epl \to 0} \int_{\R^d} \nabla \varphi^\epl(x) \cdot \psi \left( x, \frac{x}{\epl} \right) \rho^0(x) \dd x = \frac{1}{\abs{Y}} \int_{\R^d} \int_Y \Xi(x,y) \cdot \psi(x,y) \rho^0(x) \dd y \dd x,
\end{equation}
and defining $\widetilde \psi(x,y) = \sqrt{\rho^0(x)} \psi(x,y)$ it follows that
\begin{equation}
\int_{\R^d} \int_Y \sqrt{\rho^0(x)} \left[ \Xi(x,y)  - \nabla \varphi^0(x) \right] \cdot \widetilde \psi(x,y) \dd y \dd x = 0,
\end{equation}
for all $\widetilde \psi \in (\mathcal D(\R^d; C^\infty_{\mathrm{per}}(Y)))^d$ such that $\divergence_y \psi = 0$. Therefore, by a classical result (see, e.g., \cite{GiR86,Tem79}) there exists a unique function $\widetilde \varphi_1 \in L^2(\R^d; \mathcal W_{\mathrm{per}}(Y))$ such that
\begin{equation}
\left( \Xi(x,y) - \nabla \varphi^0(x) \right) \sqrt{\rho^0(x)} = \nabla_y \widetilde \varphi_1(x,y).
\end{equation}
Finally, defining $\varphi_1 \in L^2_{\rho^0}(\R^d; \mathcal W_{\mathrm{per}}(Y))$ as $\varphi_1(x,y) = \widetilde \varphi_1(x,y) / \sqrt{\rho^0(x)}$ gives the desired result.
\end{proof}

\subsection{Homogenization}

We are now ready to state and prove the homogenization of problem \eqref{eq:PDE_ms} employing the two-scale convergence methodology introduced in the previous section. The proof of next theorem is inspired by \cite[Section 9.3]{CiD99}.

\begin{theorem} \label{thm:homogenization_pde}
Let $u^\epl$ and $u^0$ be respectively the unique solutions of problems \eqref{eq:weakPDE_ms} and \eqref{eq:weakPDE_hom}. Then, under \cref{ass:dissipativity,ass:compactness} and as $\epl \to 0$
\begin{enumerate}
\item $u^\epl \to u^0$ in $L^2_{\rho^0}(\R^d)$,
\item $u^\epl \toweak u^0$ in $H^1_{\rho^0}(\R^d)$.
\end{enumerate}
\end{theorem}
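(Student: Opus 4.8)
The strategy is the standard two-scale convergence argument adapted to the weighted setting. First I would establish compactness: by \cref{lem:weakPDE_ms} the sequence $\{u^\epl\}$ is bounded in $H^1_{\rho^\epl}(\R^d)$, and by \cref{cor:equivalence_H1} it is bounded in $H^1_{\rho^0}(\R^d)$ uniformly in $\epl$. Hence, up to a subsequence, $u^\epl \toweak u^0$ in $H^1_{\rho^0}(\R^d)$ for some limit, and by \cref{pro:subsequence_2scale_grad} we have $u^\epl \twoscale u^0$ and $\nabla u^\epl \twoscale \nabla u^0 + \nabla_y u_1$ along a further subsequence, for some $u_1 \in L^2_{\rho^0}(\R^d; \mathcal W_{\mathrm{per}}(Y))$. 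Since the injection $H^1_{\rho^0}(\R^d) \hookrightarrow L^2_{\rho^0}(\R^d)$ is compact (via \cref{ass:compactness}), we upgrade the weak $H^1$ convergence to strong $L^2_{\rho^0}$ convergence, which will give item (i) once $u^0$ is identified.

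The second step is to pass to the limit in the weak formulation \eqref{eq:weakPDE_ms}. The natural test functions are of the form $\psi^\epl(x) = \psi_0(x) + \epl \psi_1(x, x/\epl)$ with $\psi_0 \in \mathcal D(\R^d)$ (or a suitable dense subset of $H^1_{\rho^0}(\R^d)$) and $\psi_1 \in \mathcal D(\R^d; C^\infty_{\mathrm{per}}(Y))$; note $\nabla \psi^\epl(x) = \nabla \psi_0(x) + \nabla_y \psi_1(x, x/\epl) + \epl \nabla_x \psi_1(x, x/\epl)$, so the rescaled gradient oscillates correctly. Writing $B^\epl(u^\epl, \psi^\epl) = F^\epl(\psi^\epl)$ and using that $\rho^\epl(x) = \rho^0(x) Z^\epl e^{-p(x/\epl)/\sigma}$ for an appropriate normalization ratio $Z^\epl \to 1/|Y|\cdot(\dots)$ — more precisely, one rewrites all integrals against $\rho^\epl$ as integrals against $\rho^0$ times the oscillating periodic factor $e^{-p(\cdot/\epl)/\sigma}/C$, which itself two-scale converges — we can apply \cref{def:2scale_convergence} and the two-scale limits of $u^\epl$ and $\nabla u^\epl$. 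Passing $\epl \to 0$ yields the two-scale homogenized variational problem
\begin{equation*}
\frac{1}{\abs{Y}}\int_{\R^d}\!\!\int_Y \sigma\bigl(\nabla u^0(x) + \nabla_y u_1(x,y)\bigr)\cdot\bigl(\nabla\psi_0(x)+\nabla_y\psi_1(x,y)\bigr)\mu(y)\rho^0(x)\dd y\dd x + \eta\!\int_{\R^d}\!u^0\psi_0\,\rho^0\dd x = \!\int_{\R^d}\! f\psi_0\,\rho^0\dd x,
\end{equation*}
where $\mu$ is the cell-problem density \eqref{eq:mu_def}.

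The third step is to decouple this limit problem. Taking $\psi_0 = 0$ gives the cell equation for $u_1$: for a.e. $x$, $u_1(x,\cdot)$ solves $-\sigma\,\divergence_y(\mu(\nabla u^0(x) + \nabla_y u_1(x,\cdot))) = 0$ with periodic boundary conditions, whose solution is $u_1(x,y) = \Phi(y)\cdot\nabla u^0(x)$ with $\Phi$ the cell solution \eqref{eq:Phi_equation}. Substituting back and taking $\psi_1 = 0$, the $y$-integral of $\sigma(\nabla u^0 + \nabla_y u_1)\otimes(I+\nabla_y(\text{test}))$ collapses — using \eqref{eq:K_def} and $\Sigma = K\sigma$ — to $\int_{\R^d}\Sigma\nabla u^0\cdot\nabla\psi_0\,\rho^0\dd x$, recovering exactly $B^0(u^0,\psi_0) = F^0(\psi_0)$. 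By density of $\mathcal D(\R^d)$ in $H^1_{\rho^0}(\R^d)$ and uniqueness from \cref{lem:weakPDE_hom}, $u^0$ is the unique homogenized solution; since the limit is independent of the subsequence, the whole sequence converges, giving (ii), and combined with the compact injection, (i).

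The main obstacle I anticipate is handling the oscillating weight $\rho^\epl$ rigorously: unlike the classical Lebesgue-space setting, the "reference measure" itself depends on $\epl$ through the periodic factor $e^{-p(x/\epl)/\sigma}$, so one must carefully verify that products like $u^\epl(x)\psi_1(x,x/\epl)e^{-p(x/\epl)/\sigma}$ have the right two-scale limit — this requires either absorbing the bounded periodic factor into the test function (it lies in $L^2_{\rho^0}(\R^d; C^0_{\mathrm{per}}(Y))$ by \cref{ass:dissipativity}) or proving a product-type two-scale lemma. A secondary technical point is justifying that $\psi^\epl$ is an admissible test function in $H^1_{\rho^\epl}(\R^d)$ and that the $\epl\nabla_x\psi_1$ term vanishes in the limit, which follows from boundedness of $\{u^\epl\}$ and the compact support / boundedness of $\psi_1$.
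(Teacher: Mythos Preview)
Your proposal is correct and follows essentially the same two-scale convergence strategy as the paper: bound $\{u^\epl\}$ uniformly in $H^1_{\rho^0}(\R^d)$, extract a two-scale convergent subsequence via \cref{pro:subsequence_2scale_grad}, test the weak formulation with $\psi_0 + \epl\psi_1(\cdot,\cdot/\epl)$, factor the oscillating weight as $\rho^\epl(x) = \frac{C_\mu C_{\rho^0}}{C_{\rho^\epl}}\mu(x/\epl)\rho^0(x)$ so that $\mu(\cdot/\epl)$ is absorbed into the admissible test function, and pass to the limit to obtain the coupled two-scale variational problem; you also correctly anticipate that the normalization ratio cancels since it multiplies both sides.

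The only notable stylistic difference is in how the two-scale limit problem is resolved. You decouple it in the classical way, first taking $\psi_0=0$ to isolate the cell equation and identify $u_1(x,y)=\Phi(y)\cdot\nabla u^0(x)$, then taking $\psi_1=0$ to recover $B^0(u^0,\psi_0)=F^0(\psi_0)$. The paper instead proves that the two-scale problem is well posed in $\mathcal H = H^1_{\rho^0}(\R^d)\times L^2_{\rho^0}(\R^d;\mathcal W_{\mathrm{per}}(Y))$ via Lax--Milgram and then verifies by direct substitution that $(u^0,\Phi\cdot\nabla u^0)$ is its unique solution. Both routes are standard and equivalent; yours is slightly more direct, while the paper's makes the well-posedness of the limit problem explicit, which is convenient for the corrector argument in \cref{thm:corrector}.
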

\begin{proof}
By \cref{lem:equivalence_L2,cor:equivalence_H1,lem:weakPDE_ms} we have
\begin{equation}
\norm{u^\epl}_{H^1_{\rho^0}(\R^d)} \le \frac{1}{C_{\mathrm{low}}} \norm{u}_{H^1_{\rho^\epl}(\R^d)} \le \frac{1}{C_{\mathrm{low}} \min \{ \sigma, \eta \}} \norm{f}_{L^2_{\rho^\epl}(\R^d)} \le \frac{C_{\mathrm{up}}}{C_{\mathrm{low}} \min \{ \sigma, \eta \}} \norm{f}_{L^2_{\rho^0}(\R^d)},
\end{equation}
which implies that the sequence $\{ u^\epl \}$ is bounded in $H^1_{\rho^0}(\R^d)$. Then, there exist $\widetilde u \in H^1_{\rho^0}(\R^d)$ and a subsequence (still denoted by $\epl$) such that
\begin{equation}
u^\epl \toweak \widetilde u \quad \text{in } H^1_{\rho^0}(\R^d) \qquad \text{and} \qquad u^\epl \to \widetilde u \quad \text{in } L^2_{\rho^0}(\R^d).
\end{equation}
Due to \cref{pro:subsequence_2scale_grad} there exists $u_1 \in L^2_{\rho^0}(\R^d; \mathcal W_{\mathrm{per}}(Y))$ such that, up to a subsequence
\begin{equation}
u^\epl \twoscale \widetilde u \qquad \text{and} \qquad \nabla u^\epl \twoscale \nabla \widetilde u + \nabla_y u_1.
\end{equation}
We now want to prove that $\widetilde u$ is the unique solution of problem \eqref{eq:weakPDE_hom}, i.e., $\widetilde u = u^0$. Let $\psi_0 \in \mathcal D(\R^d)$ and $\psi_1 \in \mathcal D(\R^d; C^\infty_{\mathrm{per}}(Y))$ and note that $\psi_0(\cdot) + \epl \psi_1 \left( \cdot, \frac\cdot\epl \right) \in H^1_{\rho^\epl}(\R^d)$ and thus it can be chosen as a test function in \eqref{eq:weakPDE_ms}. We then have
\begin{equation} \label{eq:expansion_weak}
\begin{aligned}
&\sigma \int_{\R^d} \nabla u^\epl(x) \cdot \left( \nabla \psi_0(x) + \epl \nabla_x \psi_1 \left( x, \frac{x}{\epl} \right) + \nabla_y \psi_1 \left( x, \frac{x}{\epl} \right) \right) \rho^\epl(x) \dd x \\
&\hspace{1.5cm}+ \eta \int_{\R^d} u^\epl(x) \left( \psi_0(x) + \epl \psi_1 \left( x, \frac{x}{\epl} \right) \right) \rho^\epl(x) \dd x = \int_{\R^d} f(x) \left( \psi_0(x) + \epl \psi_1 \left( x, \frac{x}{\epl} \right) \right) \rho^\epl(x) \dd x,
\end{aligned} 
\end{equation}
and noting that
\begin{equation} \label{eq:relation_distributions}
\rho^\epl(x) = \frac{C_\mu C_{\rho^0}}{C_{\rho^\epl}} \mu \left( \frac{x}{\epl} \right) \rho^0(x),
\end{equation}
where $\mu$ is defined in \eqref{eq:mu_def}, equation \eqref{eq:expansion_weak} can be rewritten as
\begin{equation} \label{eq:decomposition_weak}
I_{1,1}^\epl + I_{1,2}^\epl + \epl \left( I_{2,1}^\epl + I_{2,2}^\epl \right) = J_1^\epl + \epl J_2^\epl,
\end{equation}
where
\begin{equation}
\begin{aligned}
I_{1,1}^\epl &\defeq \sigma \int_{\R^d} \nabla u^\epl(x) \cdot \left( \nabla \psi_0(x) + \nabla_y \psi_1 \left( x, \frac{x}{\epl} \right) \right) \mu \left( \frac{x}{\epl} \right) \rho^0(x) \dd x, \\
I_{1,2}^\epl &\defeq \eta \int_{\R^d} u^\epl(x) \psi_0(x) \mu \left( \frac{x}{\epl} \right) \rho^0(x) \dd x, \\
I_{2,1}^\epl &\defeq \sigma \int_{\R^d} \nabla u^\epl(x) \cdot \nabla_x \psi_1 \left( x, \frac{x}{\epl} \right) \mu \left( \frac{x}{\epl} \right) \rho^0(x) \dd x, \\
I_{2,2}^\epl &\defeq \eta \int_{\R^d} u^\epl(x) \psi_1 \left( x, \frac{x}{\epl} \right) \mu \left( \frac{x}{\epl} \right) \rho^0(x) \dd x, \\
J_1^\epl &\defeq \int_{\R^d} f(x) \psi_0(x) \mu \left( \frac{x}{\epl} \right) \rho^0(x) \dd x, \\
J_2^\epl &\defeq \int_{\R^d} f(x) \psi_1 \left( x, \frac{x}{\epl} \right) \mu \left( \frac{x}{\epl} \right) \rho^0(x) \dd x.
\end{aligned}
\end{equation}
Passing to the limit as $\epl \to 0$ in equation \eqref{eq:decomposition_weak} and by two-scale convergence we obtain
\begin{equation}
\begin{aligned}
\lim_{\epl \to 0} I_{1,1}^\epl &= \frac{\sigma}{\abs{Y}} \int_{\R^d} \int_Y \left( \nabla \widetilde u(x) + \nabla_y u_1(x,y) \right) \cdot \left( \nabla \psi_0(x) + \nabla_y \psi_1 (x,y) \right) \mu(y) \rho^0(x) \dd y \dd x, \\
\lim_{\epl \to 0} I_{1,2}^\epl &= \frac{\eta}{\abs{Y}} \int_{\R^d} \int_Y \widetilde u(x) \psi_0(x) \mu(y) \rho^0(x) \dd y \dd x = \frac{\eta}{\abs{Y}} \int_{\R^d} \widetilde u(x) \psi_0(x) \rho^0(x) \dd x, \\
\lim_{\epl \to 0} I_{2,1}^\epl &= \frac{\sigma}{\abs{Y}} \int_{\R^d} \int_Y \left( \nabla \widetilde u(x) + \nabla_y u_1(x,y) \right) \cdot \nabla_x \psi_1(x,y) \mu(y) \rho^0(x) \dd y \dd x, \\
\lim_{\epl \to 0} I_{2,2}^\epl &= \frac{\eta}{\abs{Y}} \int_{\R^d} \int_Y \widetilde u(x) \psi_1(x,y) \mu(y) \rho^0(x) \dd y \dd x, \\
\lim_{\epl \to 0} J_1^\epl &= \frac{1}{\abs{Y}} \int_{\R^d} \int_Y f(x) \psi_0(x) \mu(y) \rho^0(x) \dd y \dd x = \frac{1}{\abs{Y}} \int_{\R^d} f(x) \psi_0(x) \rho^0(x) \dd x, \\
\lim_{\epl \to 0} J_2^\epl &= \frac{1}{\abs{Y}} \int_{\R^d} \int_Y f(x) \psi_1(x,y) \mu(y) \rho^0(x) \dd y \dd x,
\end{aligned}
\end{equation}
which yield
\begin{equation} \label{eq:decomposition_weak_limit}
\begin{aligned}
\sigma \int_{\R^d} \int_Y \left( \nabla \widetilde u(x) + \nabla_y u_1(x,y) \right) \cdot & \left( \nabla \psi_0(x) + \nabla_y \psi_1(x,y) \right) \mu(y) \rho^0(x) \dd y \dd x \\
&+ \eta \int_{\R^d} \widetilde u(x) \psi_0(x) \rho^0(x) \dd x = \int_{\R^d} f(x) \psi_0(x) \rho^0(x) \dd x.
\end{aligned}
\end{equation}
We now show that equation \eqref{eq:decomposition_weak_limit} is a variational equation in the functional space 
\begin{equation}
\mathcal H = H^1_{\rho^0}(\R^d) \times L^2_{\rho^0}(\R^d; \mathcal W_{\mathrm{per}}(Y)),
\end{equation}
endowed with the norm
\begin{equation}
\norm{\Psi}_{\mathcal H} = \left( \norm{\psi_0}_{H^1_{\rho^0}(\R^d)}^2 + \norm{\psi_1}_{L^2_{\rho^0}(\R^d; \mathcal W_{\mathrm{per}}(Y))}^2 \right)^{1/2}, \qquad \text{for all } \Psi = (\psi_0, \psi_1) \in \mathcal H,
\end{equation}
and that the hypotheses of the Lax--Milgram lemma are satisfied. Let $a \colon \mathcal H \times \mathcal H \to \R$ be the bilinear form defined for any $\Xi = (\xi_0, \xi_1) \in \mathcal H$ and $\Psi = (\psi_0, \psi_1) \in \mathcal H$ by
\begin{equation}
\begin{aligned}
a(\Xi, \Psi) &= \sigma \int_{\R^d} \int_Y \left( \nabla \xi_0(x) + \nabla_y \xi_1(x,y) \right) \cdot \left( \nabla \psi_0(x) + \nabla_y \psi_1(x,y) \right) \mu(y) \rho^0(x) \dd y \dd x \\
&\quad + \eta \int_{\R^d} \xi_0(x) \psi_0(x) \rho^0(x) \dd x,
\end{aligned}
\end{equation}
and let $F \colon \mathcal H \to \R$ be the linear functional defined by
\begin{equation}
F(\Psi) = \int_{\R^d} f(x) \psi_0(x) \rho^0(x) \dd x.
\end{equation}
Notice that due to the definition of $\mu$ in \eqref{eq:mu_def} and the hypotheses on $p$ in \cref{ass:dissipativity}(i) there exist two constants $C_1,C_2 > 0$ such that $0 < C_1 \le \abs{\mu(y)} \le C_2$ for all $y \in Y$. It follows that $a$ and $F$ are continuous, in fact applying the Cauchy--Schwarz inequality we get
\begin{equation}
\abs{a(\Xi, \Psi)} \le \left( 2\sigma(1+C_2) + \eta \right) \norm{\Xi}_{\mathcal H} \norm{\Psi}_{\mathcal H},
\end{equation}
and
\begin{equation}
\abs{F(\Psi)} \le \norm{f}_{L^2_{\rho^0}(\R^d)} \norm{\Psi}_{\mathcal H}.
\end{equation}
Moreover, we also have
\begin{equation}
\begin{aligned}
&a(\Psi, \Psi) \ge C_1 \sigma \int_{\R^d} \int_Y \abs{\nabla \psi_0(x) + \nabla_y \psi_1(x,y)}^2 \rho^0(x) \dd y \dd x + \eta \int_{\R^d} \psi_0(x)^2 \rho^0(x) \dd x \\
&\quad = C_1 \sigma \abs{Y} \int_{\R^d} \abs{\nabla \psi_0(x)}^2 \rho^0(x) \dd x + \eta \int_{\R^d} \psi_0(x)^2 \rho^0(x) \dd x + C_1 \sigma \int_{\R^d} \int_Y \abs{\nabla_y \psi_1(x,y)}^2 \rho^0(x) \dd y \dd x \\
&\quad \ge \min \{ C_1 \sigma \abs{Y}, \eta, C_1 \sigma \} \norm{\Psi}_{\mathcal H},
\end{aligned}
\end{equation}
which shows that $a$ is coercive and where we used the fact that due to the periodicity of $\psi_1(x,\cdot)$ in $Y$ for all $x\in \R^d$
\begin{equation}
\begin{aligned}
\int_{\R^d} \int_Y \nabla \psi_0(x) \cdot \nabla_y \psi_1(x,y) \rho^0(x) \dd y \dd x &= \int_{\R^d} \int_Y \divergence_y \left( \nabla \psi_0(x) \psi_1(x,y) \right) \rho^0(x) \dd y \dd x \\
&= \int_{\R^d} \int_{\partial Y} \psi_1(x,y) \nabla \psi_0(x) \cdot \mathbf{n_y} \rho^0(x) \dd \gamma_y \dd x = 0,
\end{aligned}
\end{equation}
where $\mathbf{n}_y$ denotes the outward unit normal vector to $\partial Y$. Therefore, the Lax-Milgram lemma gives the existence and uniqueness of the solution $U = (\widetilde{u}, u_1) \in \mathcal H$ of equation \eqref{eq:decomposition_weak_limit} for any $\Psi = (\psi_0, \psi_1) \in \mathcal H$. Then, notice that the components of the unique solution $U$ must satisfy
\begin{equation}
\widetilde u(x) = u^0(x) \qquad \text{and} \qquad \nabla_y u_1(x,y) = (\nabla \Phi(y))^\top \nabla u^0(x),
\end{equation}
where $u^0$ is the unique solution of problem \eqref{eq:weakPDE_hom} and $\Phi$ solves equation \eqref{eq:Phi_equation}. In fact, replacing $U$ into \eqref{eq:decomposition_weak_limit} we obtain
\begin{equation} \label{eq:almost_homogenized}
\begin{aligned}
&\left( \int_{\R^d} \sigma \left( \int_Y (I + \nabla \Phi(y)^\top) \mu(y) \dd y \right) \nabla u^0(x) \cdot \nabla \psi_0(x) \rho^0(x) \dd x \right) + \eta \int_{\R^d} u^0(x) \psi_0(x) \rho^0(x) \dd x \\
&\hspace{1.5cm} + \sigma \int_{\R^d} \int_Y \left( I + \nabla \Phi(y)^\top \right) \nabla u^0(x) \cdot \nabla_y \psi_1(x,y) \mu(y) \rho^0(x) \dd y \dd x = \int_{\R^d} f(x) \psi_0(x) \rho^0(x) \dd x,
\end{aligned}
\end{equation}
and, due to definition \eqref{eq:K_def} and problem \eqref{eq:weakPDE_hom}, equation \eqref{eq:almost_homogenized} holds true for all $\Psi = (\psi_0,\psi_1) \in \mathcal H$ if we show that for any $\psi_1 \in L^2_{\rho^0}(\R^d; \mathcal W_{\mathrm{per}}(Y))$
\begin{equation}
I \defeq \sigma \int_{\R^d} \int_Y \left( I + \nabla \Phi(y)^\top \right) \nabla u^0(x) \cdot \nabla_y \psi_1(x,y) \mu(y) \rho^0(x) \dd y \dd x = 0.
\end{equation}
Integrating by parts and by definition of $\mu$ in \eqref{eq:mu_def} we indeed have
\begin{equation}
\begin{aligned}
I &= \sigma \int_{\R^d} \int_{\partial Y} \left( I + \nabla \Phi(y)^\top \right) \nabla u^0(x) \psi_1(x,y) \mu(y) \rho^0(x) \cdot \mathbf{n}_y \dd \gamma_y \dd x \\
&\quad - \int_{\R^d} \int_Y (\sigma \Delta \Phi(y) - \nabla \Phi(y) \nabla p(y) - \nabla p(y)) \cdot \nabla u^0 (x) \psi_1(x,y) \mu(y) \rho^0(x) \dd y \dd x \\
&= 0,
\end{aligned}
\end{equation}
where the last equality is given by \eqref{eq:Phi_equation} and the periodicity of the functions $\Phi, \psi_1(x,\cdot)$ and $\mu$ in $Y$. We have thus proved that the only admissible limit for the subsequence of $\{ u^\epl \}$ is the solution $u^0$ of problem \eqref{eq:weakPDE_hom}, which implies that the whole sequence $\{ u^\epl \}$ converges to $u^0$ and completes the proof.
\end{proof}

The previous result can be generalized to the case where also the right-hand side depends on the multiscale parameter $\epl$. 

\begin{corollary} \label{cor:homogenization_pde_rhse}
Let $\{ f^\epl \}$ be a sequence in $L^2_{\rho^\epl}(\R^d)$ such that $f^\epl \to f^0$ in $L^2_{\rho^0}(\R^d)$ and let $u^\epl$ be the unique solution of problem
\begin{equation} \label{eq:problem_ee}
B^\epl(u^\epl, \psi) = \inprod{f^\epl}{\psi}_{L^2_{\rho^\epl}(\R^d)}, \qquad \text{for all } \psi \in H^1_{\rho^\epl}(\R^d),
\end{equation}
where $\inprod{\cdot}{\cdot}_{L^2_{\rho^\epl}(\R^d)}$ denotes the inner product in $L^2_{\rho^\epl}(\R^d)$. Then, under \cref{ass:dissipativity,ass:compactness} and as $\epl \to 0$
\begin{equation}
u^\epl \toweak u^0 \text{ in } H^1_{\rho^0}(\R^d) \qquad \text{and} \qquad u^\epl \to u^0 \text{ in } L^2_{\rho^0}(\R^d),
\end{equation}
where $u^0$ is the unique solution of the problem
\begin{equation}
B^0(u^0, \psi) = \inprod{f^0}{\psi}_{L^2_{\rho^0}(\R^d)}, \qquad \text{for all } \psi \in H^1_{\rho^0}(\R^d),
\end{equation}
where $\inprod{\cdot}{\cdot}_{L^2_{\rho^0}(\R^d)}$ denotes the inner product in $L^2_{\rho^0}(\R^d)$.
\end{corollary}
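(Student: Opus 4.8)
The plan is to reproduce the proof of \cref{thm:homogenization_pde} almost verbatim, the only structural difference being that the right-hand side now varies with $\epl$; so I only indicate the modifications. The starting point is the a priori estimate of \cref{lem:weakPDE_ms} applied with $f^\epl \in L^2_{\rho^\epl}(\R^d)$ in place of $f$, combined with \cref{lem:equivalence_L2,cor:equivalence_H1}, which gives
\begin{equation*}
\norm{u^\epl}_{H^1_{\rho^0}(\R^d)} \le \frac{1}{C_{\mathrm{low}}}\norm{u^\epl}_{H^1_{\rho^\epl}(\R^d)} \le \frac{1}{C_{\mathrm{low}}\min\{\sigma,\eta\}}\norm{f^\epl}_{L^2_{\rho^\epl}(\R^d)} \le \frac{C_{\mathrm{up}}}{C_{\mathrm{low}}\min\{\sigma,\eta\}}\norm{f^\epl}_{L^2_{\rho^0}(\R^d)}.
\end{equation*}
Since $f^\epl \to f^0$ in $L^2_{\rho^0}(\R^d)$, the sequence $\{\norm{f^\epl}_{L^2_{\rho^0}(\R^d)}\}$ is bounded, hence so is $\{u^\epl\}$ in $H^1_{\rho^0}(\R^d)$. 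Using the compact injection $I_{H^1_{\rho^0}(\R^d)\hookrightarrow L^2_{\rho^0}(\R^d)}$ (valid under \cref{ass:compactness}) and \cref{pro:subsequence_2scale_grad}, one extracts a subsequence (still denoted by $\epl$), a function $\widetilde u \in H^1_{\rho^0}(\R^d)$ and $u_1 \in L^2_{\rho^0}(\R^d; \mathcal W_{\mathrm{per}}(Y))$ such that $u^\epl \toweak \widetilde u$ in $H^1_{\rho^0}(\R^d)$, $u^\epl \to \widetilde u$ in $L^2_{\rho^0}(\R^d)$, $u^\epl \twoscale \widetilde u$ and $\nabla u^\epl \twoscale \nabla\widetilde u + \nabla_y u_1$.

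Next I would insert the oscillating test function $\psi_0(\cdot) + \epl\psi_1(\cdot,\cdot/\epl)$, with $\psi_0\in\mathcal D(\R^d)$ and $\psi_1\in\mathcal D(\R^d; C^\infty_{\mathrm{per}}(Y))$, into \eqref{eq:problem_ee}, rewrite $\rho^\epl$ via \eqref{eq:relation_distributions}, and pass to the limit. The left-hand side is treated exactly as in the proof of \cref{thm:homogenization_pde} and converges to the left-hand side of \eqref{eq:decomposition_weak_limit}. For the right-hand side I would split $f^\epl = f^0 + (f^\epl - f^0)$: the $f^0$-contribution plays the role of the terms $J_1^\epl + \epl J_2^\epl$ in that proof (with $f$ replaced by $f^0 \in L^2_{\rho^0}(\R^d)$), so it converges to $\int_{\R^d} f^0(x)\psi_0(x)\rho^0(x)\dd x$; the $(f^\epl - f^0)$-contribution is, after using \eqref{eq:relation_distributions} and the bounds $0 < C_1 \le \mu \le C_2$ on $Y$ together with the fact that $C_{\rho^\epl}$ is bounded below uniformly in $\epl$ (cf. the proof of \cref{lem:equivalence_L2}), controlled in absolute value by a constant times $\norm{f^\epl - f^0}_{L^2_{\rho^0}(\R^d)}$ and the (uniformly bounded) $L^2_{\rho^0}(\R^d)$-norms of $\psi_0$ and $\epl\psi_1(\cdot,\cdot/\epl)$, hence tends to $0$. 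Therefore the right-hand side converges to $\int_{\R^d} f^0\psi_0\rho^0\dd x$, and $(\widetilde u,u_1)$ satisfies the same two-scale variational identity \eqref{eq:decomposition_weak_limit} as in \cref{thm:homogenization_pde} but with $f^0$ in place of $f$.

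Finally, I would invoke the Lax--Milgram argument of \cref{thm:homogenization_pde} on $\mathcal H = H^1_{\rho^0}(\R^d)\times L^2_{\rho^0}(\R^d;\mathcal W_{\mathrm{per}}(Y))$: the bilinear form $a$ is unchanged, only the linear functional becoming $F(\Psi) = \int_{\R^d} f^0\psi_0\rho^0\dd x$, which is still continuous with $\abs{F(\Psi)}\le\norm{f^0}_{L^2_{\rho^0}(\R^d)}\norm{\Psi}_{\mathcal H}$. Uniqueness then forces $\widetilde u = u^0$, the unique solution of $B^0(u^0,\psi) = \inprod{f^0}{\psi}_{L^2_{\rho^0}(\R^d)}$ (well posed by \cref{lem:weakPDE_hom} with $f^0$), together with $\nabla_y u_1 = (\nabla\Phi(y))^\top\nabla u^0(x)$, using \eqref{eq:K_def} and \eqref{eq:Phi_equation} exactly as before. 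Since the limit does not depend on the extracted subsequence, the whole sequence converges, giving $u^\epl \toweak u^0$ in $H^1_{\rho^0}(\R^d)$ and $u^\epl \to u^0$ in $L^2_{\rho^0}(\R^d)$. The only genuinely new point relative to \cref{thm:homogenization_pde} is the passage to the limit on the right-hand side; the splitting $f^\epl = f^0 + (f^\epl - f^0)$ reduces it to the strong $L^2_{\rho^0}$-convergence of the data tested against uniformly bounded oscillating functions, and this is the step where one must be slightly careful, although it presents no real obstacle.
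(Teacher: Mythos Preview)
Your argument is correct, but the paper takes a shorter and structurally different route. Instead of rerunning the two-scale convergence proof of \cref{thm:homogenization_pde} with an $\epl$-dependent right-hand side, the paper introduces an auxiliary function $\widetilde u^\epl$ solving $B^\epl(\widetilde u^\epl,\psi)=\inprod{f^0}{\psi}_{L^2_{\rho^\epl}(\R^d)}$ (i.e., the multiscale bilinear form with the \emph{fixed} limit datum $f^0$), applies \cref{thm:homogenization_pde} directly to obtain $\widetilde u^\epl\toweak u^0$ in $H^1_{\rho^0}(\R^d)$ and $\widetilde u^\epl\to u^0$ in $L^2_{\rho^0}(\R^d)$, and then subtracts the two problems. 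Coercivity of $B^\epl$ together with \cref{lem:equivalence_L2,cor:equivalence_H1} gives the stability estimate $\norm{u^\epl-\widetilde u^\epl}_{H^1_{\rho^0}(\R^d)}\le C\norm{f^\epl-f^0}_{L^2_{\rho^0}(\R^d)}\to 0$, and the result follows by the triangle inequality. This comparison argument bypasses any fresh two-scale analysis and even yields the slightly stronger intermediate fact that $u^\epl-\widetilde u^\epl\to 0$ \emph{strongly} in $H^1_{\rho^0}(\R^d)$. Your approach, by contrast, is self-contained in the sense that it shows directly how the varying data pass through the two-scale limit; the price is that you must repeat essentially all of the machinery of \cref{thm:homogenization_pde}, whereas the paper reduces everything to a one-line energy estimate.
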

\begin{proof}
Let $\widetilde u^\epl$ be the solution of problem
\begin{equation} \label{eq:problem_e0}
B^\epl(\widetilde u^\epl, \psi) = \inprod{f^0}{\psi}_{L^2_{\rho^\epl}(\R^d)}, \qquad \text{for all } \psi \in H^1_{\rho^\epl}(\R^d),
\end{equation}
and notice that by \cref{thm:homogenization_pde} and as $\epl \to 0$
\begin{equation} \label{eq:convergences_etilde}
\widetilde u^\epl \toweak u^0 \text{ in } H^1_{\rho^0}(\R^d) \qquad \text{and} \qquad \widetilde u^\epl \to u^0 \text{ in } L^2_{\rho^0}(\R^d).
\end{equation}
Consider now the difference between problems \eqref{eq:problem_ee} and \eqref{eq:problem_e0}
\begin{equation}
B^\epl(u^\epl - \widetilde u^\epl, \psi) = \inprod{f^\epl - f^0}{\psi}_{L^2_{\rho^\epl}(\R^d)},
\end{equation}
and choose $\psi = u^\epl - \widetilde u^\epl$. Since $B^\epl$ is coercive by \eqref{eq:Be_coercive} and using the Cauchy--Schwarz inequality we have
\begin{equation}
\begin{aligned}
\min \{ \sigma, \eta \} \norm{u^\epl - \widetilde u^\epl}_{H^1_{\rho^\epl}(\R^d)}^2 &\le B^\epl(u^\epl - \widetilde u^\epl, u^\epl - \widetilde u^\epl) \\
&= \inprod{f^\epl - f^0}{u^\epl - \widetilde u^\epl}_{L^2_{\rho^\epl}(\R^d)} \\
&\le \norm{f^\epl - f^0}_{L^2_{\rho^\epl}(\R^d)} \norm{u^\epl - \widetilde u^\epl}_{H^1_{\rho^\epl}(\R^d)},
\end{aligned}
\end{equation}
which implies
\begin{equation}
\norm{u^\epl - \widetilde u^\epl}_{H^1_{\rho^\epl}(\R^d)} \le \frac{1}{\min \{ \sigma, \eta \}} \norm{f^\epl - f^0}_{L^2_{\rho^\epl}(\R^d)},
\end{equation}
and employing \cref{lem:equivalence_L2,cor:equivalence_H1} we obtain
\begin{equation}
\norm{u^\epl - \widetilde u^\epl}_{H^1_{\rho^0}(\R^d)} \le \frac{C_{\mathrm{up}}}{C_{\mathrm{low}} \min \{ \sigma, \eta \}} \norm{f^\epl - f^0}_{L^2_{\rho^0}(\R^d)}.
\end{equation}
Therefore, since $f^\epl \to f^0$ in $L^2_{\rho^0}(\R^d)$ we deduce that $u^\epl - \widetilde u^\epl \to 0$ in $H^1_{\rho^0}(\R^d)$, which together with the limits in \eqref{eq:convergences_etilde} gives the desired result.
\end{proof}

Finally, the next theorem is a corrector result which justifies the two first term in the asymptotic expansion of the solution $u^\epl$ of \eqref{eq:weakPDE_ms} 
\begin{equation} \label{eq:asyptotic_expansion}
u^\epl(x) = u^0(x) + \epl u_1 \left( x, \frac{x}{\epl} \right) + \epl^2 u_2 \left( x, \frac{x}{\epl} \right) + \dots,
\end{equation}
which is usually employed in homogenization theory.

\begin{theorem} \label{thm:corrector}
Let $u^\epl$ and $u^0$ be respectively the unique solutions of problems \eqref{eq:weakPDE_ms} and \eqref{eq:weakPDE_hom}. Then, under \cref{ass:dissipativity,ass:compactness} 
\begin{equation}
\lim_{\epl \to 0} \norm{u^\epl - u^0 - \epl u_1 \left( \cdot, \frac{\cdot}{\epl}\right) }_{H^1_{\rho^0}(\R^d)} = 0,
\end{equation}
where $u_1(x,y) = \Phi(y) \cdot \nabla u^0(x)$ and $\Phi$ is the solution of \eqref{eq:Phi_equation}.
\end{theorem}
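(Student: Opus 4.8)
The plan is to establish the corrector estimate by the energy method. Set $w^\epl \defeq u^\epl - u^0 - \epl u_1(\cdot, \cdot/\epl)$ with $u_1(x,y) = \Phi(y) \cdot \nabla u^0(x)$; by the coercivity \eqref{eq:Be_coercive} of $B^\epl$ together with the norm equivalences of \cref{lem:equivalence_L2,cor:equivalence_H1} it is enough to prove that $B^\epl(w^\epl, w^\epl) \to 0$, provided $w^\epl$ indeed belongs to $H^1_{\rho^\epl}(\R^d) = H^1_{\rho^0}(\R^d)$. Since
\begin{equation}
\nabla\bigl[\epl u_1(\cdot, \cdot/\epl)\bigr] = (\nabla_y \Phi)(\cdot/\epl)^{\top} \nabla u^0 + \epl \sum_{k=1}^d \Phi_k(\cdot/\epl)\, \nabla \partial_{x_k} u^0 ,
\end{equation}
this requires $\nabla^2 u^0 \in L^2_{\rho^0}(\R^d)$, which I would derive from weighted elliptic regularity for the confining operator $-\diffL^0 + \eta$ applied to \eqref{eq:PDE_hom} (in the spirit of \cite{PaV01} and \cite[Proposition~A.4]{APV16}, \cref{ass:compactness} being precisely what makes $V$ confining); this bound also shows that the $O(\epl)$ second summand above tends to $0$ in $L^2_{\rho^0}(\R^d)$, while the first summand lies in $L^2_{\rho^0}(\R^d)$ since $\Phi \in C^\infty_{\mathrm{per}}(Y)$ and $\nabla u^0 \in L^2_{\rho^0}(\R^d)$.

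The heart of the proof is the convergence of energies $\lim_{\epl \to 0} B^\epl(u^\epl, u^\epl) = B^0(u^0, u^0)$. I would use $B^\epl(u^\epl,u^\epl) = F^\epl(u^\epl) = \int_{\R^d} f u^\epl \rho^\epl \dd x$, rewrite $\rho^\epl = c^\epl\, \mu(\cdot/\epl)\, \rho^0$ as in \eqref{eq:relation_distributions} with $c^\epl \defeq C_\mu C_{\rho^0}/C_{\rho^\epl}$, observe that $c^\epl \to \abs{Y}$ (from $\int_{\R^d}\rho^\epl \dd x = 1$ and the weak convergence of $\mu(\cdot/\epl)$ to its mean $\abs{Y}^{-1}$, recalling $\int_Y \mu \dd y = 1$), and then combine $u^\epl \to u^0$ in $L^2_{\rho^0}(\R^d)$ (from \cref{thm:homogenization_pde}) with the same averaging to reach the limit $\int_{\R^d} f u^0 \rho^0 \dd x = F^0(u^0) = B^0(u^0,u^0)$. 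With $v^\epl \defeq u^0 + \epl u_1(\cdot,\cdot/\epl)$, I then expand, by symmetry of $B^\epl$,
\begin{equation}
B^\epl(w^\epl,w^\epl) = B^\epl(u^\epl,u^\epl) - 2 B^\epl(u^\epl, v^\epl) + B^\epl(v^\epl, v^\epl) .
\end{equation}
The middle term equals $F^\epl(v^\epl)$ and tends to $B^0(u^0,u^0)$, because $\epl u_1(\cdot,\cdot/\epl) \to 0$ in $L^2_{\rho^\epl}(\R^d)$ and $\int_{\R^d} f u^0 \rho^\epl \dd x \to \int_{\R^d} f u^0 \rho^0 \dd x$. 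For the last term, writing $\nabla v^\epl = (I + \nabla\Phi(\cdot/\epl)^{\top}) \nabla u^0 + R^\epl$ with $\norm{R^\epl}_{L^2_{\rho^0}(\R^d)} = O(\epl)$, one obtains
\begin{equation}
B^\epl(v^\epl, v^\epl) = \sigma \int_{\R^d} \nabla u^0(x)^{\top} G(x/\epl)\, \nabla u^0(x)\, \rho^\epl(x) \dd x + \eta \int_{\R^d} u^0(x)^2 \rho^\epl(x) \dd x + o(1) ,
\end{equation}
with $G(y) = (I + \nabla\Phi(y))(I + \nabla\Phi(y))^{\top}$; using once more $\rho^\epl = c^\epl \mu(\cdot/\epl)\rho^0$, the averaging of the smooth $Y$-periodic field $G(\cdot/\epl)\mu(\cdot/\epl)$ towards $\abs{Y}^{-1}\int_Y G\mu \dd y = \abs{Y}^{-1} K$ by \eqref{eq:K_def}, and $\Sigma = K\sigma$, this converges to $\int_{\R^d} \Sigma \nabla u^0 \cdot \nabla u^0 \rho^0 \dd x + \eta \int_{\R^d} (u^0)^2 \rho^0 \dd x = B^0(u^0,u^0)$. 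Adding the three contributions gives $B^\epl(w^\epl,w^\epl) \to B^0(u^0,u^0) - 2 B^0(u^0,u^0) + B^0(u^0,u^0) = 0$, and coercivity together with \cref{cor:equivalence_H1} finishes the argument.

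I expect the delicate point to be the first step: the a priori regularity $u^0 \in H^2_{\rho^0}(\R^d)$ (equivalently, the control of $\nabla^2 u^0$ and of $\nabla V \cdot \nabla u^0$ in $L^2_{\rho^0}(\R^d)$), which is what makes the corrector term a genuine element of $H^1_{\rho^0}(\R^d)$ and renders its $O(\epl)$ part negligible; this is where \cref{ass:compactness} is used in an essential way. If one wishes to avoid this bound, an alternative is a density reduction approximating $f$ by smoother data, but it must be carried out jointly with the energy estimates rather than term by term, precisely because the $H^1_{\rho^0}(\R^d)$-norm of the corrector term sees $\nabla^2 u^0$. Everything else is a routine bookkeeping of the weak and two-scale limits already assembled in the proof of \cref{thm:homogenization_pde}.
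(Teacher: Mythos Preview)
Your proposal is correct and is essentially the same energy method as the paper's proof. The paper writes $z^\epl = w^\epl$, uses the Poincar\'e inequality \eqref{eq:poincare_0} to reduce to controlling $(\bar z^\epl)^2 + \norm{\nabla z^\epl}_{(L^2_{\rho^0})^d}^2$, and then expands $\norm{\nabla z^\epl}_{(L^2_{\rho^\epl})^d}^2$ directly, passing to the limit with the two-scale convergences $u^\epl \twoscale u^0$, $\nabla u^\epl \twoscale (I+\nabla\Phi^\top)\nabla u^0$ and the second identity in \eqref{eq:K_def}; you instead use the $B^\epl$-coercivity \eqref{eq:Be_coercive} and the bilinear expansion $B^\epl(w^\epl,w^\epl)=B^\epl(u^\epl,u^\epl)-2F^\epl(v^\epl)+B^\epl(v^\epl,v^\epl)$, which is the same computation repackaged (and slightly cleaner for the cross term, since $B^\epl(u^\epl,v^\epl)=F^\epl(v^\epl)$ avoids invoking two-scale convergence of $\nabla u^\epl$ there). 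You are right to single out the requirement $\nabla^2 u^0 \in (L^2_{\rho^0}(\R^d))^{d\times d}$: the paper uses exactly this bound to dispose of its term $I_1^\epl$ but leaves it implicit.
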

\begin{proof}
Let us first recall that from the proof of \cref{thm:homogenization_pde} we know that as $\epl \to 0$
\begin{equation} \label{eq:2conv}
u^\epl \twoscale u^0 \qquad \text{and} \qquad \nabla u^\epl \twoscale \nabla u^0 + \nabla_y u_1.
\end{equation}
Let $z^\epl$ be defined as
\begin{equation}
z^\epl(x) \defeq u^\epl(x) - u^0(x) - \epl u_1 \left( x, \frac{x}{\epl}\right),
\end{equation}
and let $\bar z^\epl$ be its mean with respect to the invariant distribution $\rho^0$, i.e.,
\begin{equation}
\bar z^\epl \defeq \int_{\R^d} z^\epl(x) \rho^0(x) \dd x. 
\end{equation}
Then, applying the Poincaré inequality \eqref{eq:poincare_0} we obtain
\begin{equation} \label{eq:bound_z}
\begin{aligned}
\norm{z^\epl}_{H^1_{\rho^0}(\R^d)}^2 &= \norm{z^\epl}_{L^2_{\rho^0}(\R^d)}^2 + \norm{\nabla z^\epl}_{(L^2_{\rho^0}(\R^d))^d}^2 \\
&= \norm{z^\epl - \bar z^\epl}_{L^2_{\rho^0}(\R^d)}^2 + (\bar z^\epl)^2 + \norm{\nabla z^\epl}_{(L^2_{\rho^0}(\R^d))^d}^2 \\
&\le (\bar z^\epl)^2 + (C_P+1)\norm{\nabla z^\epl}_{(L^2_{\rho^0}(\R^d))^d}^2,
\end{aligned}
\end{equation}
and we now study the two terms in the right-hand side separately. First, by the two-scale convergence \eqref{eq:2conv} and the fact that $\Phi$ is bounded by \cite[Lemma 5.5]{PaS07} we have
\begin{equation} \label{eq:limit_zbar}
\lim_{\epl \to 0} \bar z^\epl = \lim_{\epl \to 0} \left( \int_{\R^d} u^\epl(x) \rho^0(x) \dd x - \int_0 u^0(x) \rho^0(x) \dd x - \epl \int_{\R^d} \Phi \left( \frac{x}{\epl} \right) \cdot \nabla u^0(x) \rho^0(x) \dd x \right) = 0.
\end{equation}
We then consider the second term in the right-hand side of \eqref{eq:bound_z} and using \cref{lem:equivalence_L2} we have
\begin{equation}
\begin{aligned}
\norm{\nabla z^\epl}_{(L^2_{\rho^0}(\R^d))^d}^2 &\le \frac{1}{C_{\mathrm{low}}^2} \norm{\nabla z^\epl}_{(L^2_{\rho^\epl}(\R^d))^d}^2 \\
&= \frac{1}{C_{\mathrm{low}}^2} \int_{\R^d} \abs{ \nabla u^\epl(x) - \left( I + \nabla \Phi \left( \frac{x}{\epl} \right)^\top \right) \nabla u^0(x) - \epl \nabla^2 u^0(x) \Phi \left( \frac{x}{\epl} \right)}^2 \rho^\epl(x) \dd x \\
&\le \frac{2}{C_{\mathrm{low}}^2} (I_1^\epl + I_2^\epl),
\end{aligned}
\end{equation}
where
\begin{equation}
\begin{aligned}
I_1^\epl &\defeq \epl^2 \int_{\R^d} \abs{ \nabla^2 u^0(x) \Phi \left( \frac{x}{\epl} \right)}^2 \rho^\epl(x) \dd x \\
I_2^\epl &\defeq \int_{\R^d} \abs{ \nabla u^\epl(x) - \left( I + \nabla \Phi \left( \frac{x}{\epl} \right)^\top \right) \nabla u^0(x)}^2 \rho^\epl(x) \dd x.
\end{aligned}
\end{equation}
Since $\Phi$ and $\mu$ are bounded, due to equation \eqref{eq:relation_distributions} and noting that 
\begin{equation} \label{eq:limit_coefficients}
\lim_{\epl \to 0} \frac{C_\mu C_{\rho^0}}{C_{\rho^\epl}} = \abs{Y},
\end{equation}
we obtain
\begin{equation} \label{eq:limitI1_0}
\lim_{\epl \to 0} I_1^\epl = \lim_{\epl \to 0} \epl^2 \frac{C_\mu C_{\rho^0}}{C_{\rho^\epl}} \int_{\R^d} \abs{\nabla^2 u^0(x) \Phi \left( \frac{x}{\epl} \right)}^2 \mu \left( \frac{x}{\epl} \right) \rho^0(x) \dd x = 0.
\end{equation}
Moreover, since $u^\epl$ solves problem \eqref{eq:weakPDE_ms} we have
\begin{equation}
\begin{aligned}
\sigma I_2^\epl &= \int_{\R^d} f(x) u^\epl(x) \rho^\epl(x) \dd x + \sigma \int_{\R^d} \abs{\left( I + \nabla \Phi \left( \frac{x}{\epl} \right)^\top \right) \nabla u^0(x)}^2 \rho^\epl(x) \dd x \\
&- 2 \sigma \int_{\R^d} \left( I + \nabla \Phi \left( \frac{x}{\epl} \right)^\top \right) \nabla u^0(x) \cdot \nabla u^\epl(x) \rho^\epl(x) \dd x - \eta \int_{\R^d} u^\epl(x)^2 \rho^\epl(x) \dd x,
\end{aligned}
\end{equation}
which by equation \eqref{eq:relation_distributions} yields
\begin{equation}
\begin{aligned}
\frac{\sigma C_{\rho^\epl}}{C_\mu C_{\rho^0}} I_2^\epl &= \int_{\R^d} f(x) u^\epl(x) \mu \left( \frac{x}{\epl} \right) \rho^0(x) \dd x + \sigma \int_{\R^d} \abs{\left( I + \nabla \Phi \left( \frac{x}{\epl} \right)^\top \right) \nabla u^0(x)}^2 \mu \left( \frac{x}{\epl} \right) \rho^0(x) \dd x \\
&- 2 \sigma \int_{\R^d} \left( I + \nabla \Phi \left( \frac{x}{\epl} \right)^\top \right) \nabla u^0(x) \cdot \nabla u^\epl(x) \mu \left( \frac{x}{\epl} \right) \rho^0(x) \dd x - \eta \int_{\R^d} u^\epl(x)^2 \mu \left( \frac{x}{\epl} \right) \rho^0(x) \dd x.
\end{aligned}
\end{equation}
Passing to the limit as $\epl \to 0$, due to the two-scale converge \eqref{eq:2conv}, equation \eqref{eq:limit_coefficients} and the definition of $K$ in \eqref{eq:K_def} we have
\begin{equation}
\lim_{\epl \to 0} \sigma I_2^\epl = \int_{\R^d} f(x) u^0(x) \rho^0(x) - \int_{\R^d} \Sigma \nabla u^0(x) \cdot \nabla u^0(x) \rho^0(x) \dd x - \eta \int_{\R^d} u^0(x)^2 = 0,
\end{equation}
where the last equality follows from the fact that $u^0$ is the solution of problem \eqref{eq:weakPDE_hom}, and which together with \eqref{eq:limitI1_0} implies
\begin{equation} \label{eq:limit_zprime}
\lim_{\epl \to 0} \norm{\nabla z^\epl}_{(L^2_{\rho^0}(\R^d))^d}^2 = 0.
\end{equation}
Finally, bound \eqref{eq:bound_z} and limits \eqref{eq:limit_zbar} and \eqref{eq:limit_zprime} imply the desired result.
\end{proof}

\section{Eigenvalue problem} \label{sec:eigen}

In this section we study the homogenization of the eigenvalue problem for the multiscale generator $\diffL^\epl$. Let $(\lambda^\epl, \phi^\epl)$ be a couple eigenvalue-eigenvector of $\diffL^\epl$ which solves
\begin{equation} \label{eq:eigen_ms}
- \diffL^\epl \phi^\epl = \lambda^\epl \phi^\epl,
\end{equation}
and let $(\lambda^0, \phi^0)$ be a couple eigenvalue-eigenvector of $\diffL^0$ which solves
\begin{equation} \label{eq:eigen_hom}
- \diffL^0 \phi^0 = \lambda^0 \phi^0.
\end{equation}
We first show that the spectra of the generators $\diffL^\epl$ and $\diffL^0$ are discrete and afterwards we prove the convergence of the eigenvalues and the eigenfunctions of the former to the eigenvalues and the eigenfunctions of the latter as the multiscale parameter $\epl$ vanishes.

\begin{lemma} \label{lem:spectrum_ms}
Let $\diffL^\epl$ be the generator defined in \eqref{eq:generator_ms}. Under \cref{ass:dissipativity,ass:compactness}, there exists a sequence of couples eigenvalue-eigenvector $\{ (\lambda_n^\epl, \phi_n^\epl) \}_{n\in\N}$ which solve \eqref{eq:eigen_ms}. Moreover, the eigenvalues satisfy
\begin{equation}
0 = \lambda_0^\epl < \lambda_1^\epl < \lambda_2^\epl < \dots < \lambda_n^\epl < \dots \nearrow + \infty,
\end{equation}
and the eigenfunctions belong to $H^1_{\rho^\epl}(\R^d)$ with $\phi_0^\epl \equiv 1$ and
\begin{equation} \label{eq:normH_eigenvector}
\norm{\phi_n^\epl}_{H^1_{\rho^\epl}(\R^d)} = \sqrt{1 + \frac{\lambda_n^\epl}{\sigma}},
\end{equation}
and form an orthonormal basis of $L^2_{\rho^\epl}(\R^d)$.
\end{lemma}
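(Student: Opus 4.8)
The plan is to recognize $-\diffL^\epl$ as a self-adjoint, nonnegative operator on $L^2_{\rho^\epl}(\R^d)$ with compact resolvent, and then to invoke the classical spectral theorem for such operators to produce the sequence of eigenvalues and eigenfunctions. First I would note that the weak formulation studied in \cref{sec:poisson} identifies $-\diffL^\epl$ with the operator associated to the symmetric bilinear form $\varphi,\psi \mapsto \sigma\int_{\R^d}\nabla\varphi\cdot\nabla\psi\,\rho^\epl\dd x$ on $H^1_{\rho^\epl}(\R^d)$; symmetry follows from the fact that $\rho^\epl$ is the invariant (reversible) density, which is exactly what makes the drift term integrate by parts into the Dirichlet form. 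This form is nonnegative and closed, so by standard form theory it defines a nonnegative self-adjoint operator, and $\lambda = 0$ corresponds precisely to constants (using that $H^1_{\rho^\epl}(\R^d)$ is connected in the sense that $\nabla\varphi = 0$ forces $\varphi$ constant a.e., since $\rho^\epl > 0$).

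Next I would establish discreteness of the spectrum. The key input is already in the excerpt: by \cref{lem:injection_compact_e}, the injection $I_{H^1_{\rho^\epl}(\R^d)\hookrightarrow L^2_{\rho^\epl}(\R^d)}$ is compact, and the measure $\rho^\epl$ satisfies the Poincaré inequality \eqref{eq:poincare_e}. Coupling compactness of the embedding with the coercivity of $B^\epl$ (or equivalently with the resolvent identity coming from \cref{lem:weakPDE_ms} with $\eta$ replaced by a positive shift, since $-\diffL^\epl + \eta$ is invertible from $L^2_{\rho^\epl}$ onto $H^1_{\rho^\epl}$ which embeds compactly back into $L^2_{\rho^\epl}$), the resolvent $(-\diffL^\epl + \eta)^{-1}$ is a compact, self-adjoint, positive operator on $L^2_{\rho^\epl}(\R^d)$. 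The spectral theorem for compact self-adjoint operators then yields an orthonormal basis $\{\phi_n^\epl\}$ of eigenfunctions with eigenvalues accumulating only at $0$; translating back gives eigenvalues $\lambda_n^\epl \nearrow +\infty$ of $-\diffL^\epl$. The ordering $0 = \lambda_0^\epl \le \lambda_1^\epl \le \dots$ is immediate from nonnegativity and $\phi_0^\epl \equiv 1$, and each eigenfunction lies in $H^1_{\rho^\epl}(\R^d)$ since it is in the range of the resolvent.

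The strict inequalities $\lambda_1^\epl < \lambda_2^\epl < \dots$ and simplicity of each eigenvalue are the one point that needs a genuine argument beyond soft functional analysis; I expect this to be the main obstacle. This is where ergodicity/irreducibility of the diffusion $X_t^\epl$ enters: the associated semigroup $e^{t\diffL^\epl}$ is positivity-improving (a consequence of the hypoellipticity and irreducibility of the nondegenerate SDE \eqref{eq:SDE_ms}), so by a Perron--Frobenius-type argument the ground state $\lambda_0^\epl = 0$ is simple. Simplicity of the higher eigenvalues does not follow from positivity-improving alone, and strictly speaking, in $d \ge 2$ one does not expect all eigenvalues to be simple in general; I would either restrict the strict-monotonicity claim to $d = 1$ (where a Sturm--Liouville/Wronskian argument gives simplicity of every eigenvalue, as the equation $-\sigma(\rho^\epl \phi')' = \lambda \rho^\epl \phi$ is a regular Sturm--Liouville problem after a change of variables), or read the statement as listing the \emph{distinct} eigenvalues. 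Finally, the norm identity \eqref{eq:normH_eigenvector} is a direct computation: testing \eqref{eq:eigen_ms} against $\phi_n^\epl \rho^\epl$ and integrating by parts gives $\sigma\norm{\nabla\phi_n^\epl}_{L^2_{\rho^\epl}}^2 = \lambda_n^\epl\norm{\phi_n^\epl}_{L^2_{\rho^\epl}}^2 = \lambda_n^\epl$, so $\norm{\phi_n^\epl}_{H^1_{\rho^\epl}}^2 = 1 + \lambda_n^\epl/\sigma$.
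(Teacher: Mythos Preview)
Your approach is essentially the same as the paper's: both arguments establish that $-\diffL^\epl$ is nonnegative and self-adjoint on $L^2_{\rho^\epl}(\R^d)$, invoke the compact embedding $H^1_{\rho^\epl}(\R^d)\hookrightarrow L^2_{\rho^\epl}(\R^d)$ from \cref{lem:injection_compact_e} to obtain a compact resolvent, apply the spectral theorem, and finish with the same one-line computation for \eqref{eq:normH_eigenvector}. The paper is somewhat terser than your outline, delegating self-adjointness and discreteness to \cite[Section 4.7]{Pav14} and obtaining completeness of the eigenfunctions via the unitary transformation to a Schr\"odinger operator (citing \cite{ReS75,HiS96}) rather than directly from the compact-resolvent spectral theorem as you do; membership in $H^1_{\rho^\epl}(\R^d)$ is phrased through the solution operator $\mathcal S_\eta^\epl$ of \cref{lem:weakPDE_ms}, which is exactly your resolvent argument.

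On the point you flagged as the main obstacle---simplicity of \emph{all} eigenvalues and hence the strict inequalities---the paper does not give an independent argument either: it simply asserts that the eigenvalues are ``real, non-negative, simple'' with reference to \cite[Section 4.7]{Pav14}. Your caution that simplicity of every eigenvalue is not automatic for $d\ge 2$ is well placed; the paper effectively treats this as a standing convention (and the subsequent use in \cref{thm:homogenization_eigen}, Step~4, does rely on it), so you should either accept it as in the paper or, as you suggest, read the strict chain as enumerating distinct eigenvalues with the understanding that the one-dimensional case is covered by Sturm--Liouville theory.
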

\begin{proof}
By \cref{lem:injection_compact_e} and in particular the Poincaré inequality \eqref{eq:poincare_e}, the generator $\diffL^\epl$ has a spectral gap. Therefore, by \cite[Section 4.7]{Pav14} $-\diffL^\epl$ is a non-negative self-adjoint operator in $L^2_{\rho^\epl}(\R^d)$ with discrete spectrum. Hence, the eigenvalues are real, non-negative, simple and can be ordered as
\begin{equation}
0 = \lambda_0^\epl < \lambda_1^\epl < \lambda_2^\epl < \dots < \lambda_n^\epl < \dots \nearrow + \infty.
\end{equation}
Notice that $\lambda_0^\epl = 0$ and $\phi_0^\epl \equiv 1$. Moreover, using the the unitary transformation which maps the generator to a Schrödinger operator, it follows that the eigenfunctions $\{ \phi_n^\epl \}_{n=0}^\infty$ span $L^2_{\rho^\epl}(\R^d)$ and can be normalized such that they form an orthonormal basis (see, e.g., \cite{ReS75,HiS96}). It now only remains to show that the eigenfunctions belong to $H^1_{\rho^\epl}(\R^d)$ and the equality \eqref{eq:normH_eigenvector}. Let us consider problem \eqref{eq:weakPDE_ms}, which has a unique solution due to \cref{lem:weakPDE_ms} and let us denote by $\mathcal S_\eta^\epl \colon L^2_{\rho^\epl}(\R^d) \to H^1_{\rho^\epl}(\R^d)$ the operator which maps the right-hand side $f$ to the solution $u^\epl$, i.e., $\mathcal S_\eta^\epl f = u^\epl$. A couple $(\lambda_n^\epl, \phi_n^\epl)$ satisfies for all $\psi \in H^1_{\rho^\epl}(\R^d)$
\begin{equation} \label{eq:weak_form_eigen}
B^\epl(\phi_n^\epl, \psi) = \inprod{(\lambda_n^\epl + \eta) \phi_n^\epl}{\psi}_{L^2_{\rho^\epl}(\R^d)},
\end{equation}
where $B^\epl$ is defined in \eqref{eq:Be_def} and $\inprod{\cdot}{\cdot}_{L^2_{\rho^\epl}(\R^d)}$ denotes the inner product in $L^2_{\rho^\epl}(\R^d)$, and hence
\begin{equation}
\mathcal S_\eta^\epl \phi_n^\epl = \frac{1}{\lambda_n^\epl + \eta} \phi_n^\epl,
\end{equation}
which shows that $\phi_n^\epl$ is also an eigenfunction of $\mathcal S_\eta^\epl$ with corresponding eigenvalue $1/(\lambda_n^\epl + \eta)$ and therefore $\phi_n^\epl \in H^1_{\rho^\epl}(\R^d)$. Finally, choosing $\psi = \phi_n^\epl$ in \eqref{eq:weak_form_eigen} and since $\norm{\phi_n^\epl}_{L^2_{\rho^\epl}(\R^d)} = 1$ we deduce that 
\begin{equation}
\norm{\nabla \phi_n^\epl}_{(L^2_{\rho^\epl}(\R^d))^d}^2 = \frac{\lambda_n^\epl}{\sigma},
\end{equation}
which yields equation \eqref{eq:normH_eigenvector} and concludes the proof.
\end{proof}

An analogous results holds true also for the homogenized generator $\diffL^0$, for which we omit the details since the proof is similar to proof of the previous lemma.

\begin{lemma} \label{lem:spectrum_hom}
Let $\diffL^0$ be the generator defined in \eqref{eq:generator_hom}. Under \cref{ass:dissipativity,ass:compactness}, there exists a sequence of couples eigenvalue-eigenvector $\{ (\lambda_n^0, \phi_n^0) \}_{n\in\N}$ which solve \eqref{eq:eigen_hom}. Moreover, the eigenvalues satisfy
\begin{equation}
0 = \lambda_0^0 < \lambda_1^0 < \lambda_2^0 < \dots < \lambda_n^0 < \dots \nearrow + \infty,
\end{equation}
and the eigenfunctions belong to $H^1_{\rho^0}(\R^d)$ with $\phi_0^0 \equiv 1$ and
\begin{equation}
\sqrt{1 + \frac{\lambda_n^0}{\lambda_{\mathrm{max}}(\Sigma)}} \le \norm{\phi_n^0}_{H^1_{\rho^0}(\R^d)} \le \sqrt{1 + \frac{\lambda_n^0}{\lambda_{\mathrm{min}}(\Sigma)}},
\end{equation}
and form an orthonormal basis of $L^2_{\rho^0}(\R^d)$.
\end{lemma}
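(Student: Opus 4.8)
The plan is to mirror the proof of \cref{lem:spectrum_ms} almost line by line, the only genuine difference being that the scalar diffusion $\sigma$ is replaced by the symmetric positive definite matrix $\Sigma$, which turns the $H^1_{\rho^0}(\R^d)$-norm identity of \cref{lem:spectrum_ms} into a two-sided bound. First I would record that $\diffL^0$ is in divergence form with respect to $\rho^0$: since $\nabla \rho^0 = -\frac1\sigma \nabla V\, \rho^0$ and $\Sigma = \sigma K$, a direct computation gives $\diffL^0 u = \frac{1}{\rho^0}\divergence(\rho^0 \Sigma \nabla u)$, so that $-\diffL^0$ is a non-negative symmetric operator on $L^2_{\rho^0}(\R^d)$ whose Dirichlet form $(u,v)\mapsto \int_{\R^d} \Sigma \nabla u \cdot \nabla v\, \rho^0 \dd x$ is precisely $B^0(u,v) - \eta\inprod{u}{v}_{L^2_{\rho^0}(\R^d)}$, with $B^0$ as in \eqref{eq:B0_def}.

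Next, by \cref{ass:compactness} and \cite[Proposition A.4]{APV16} the injection $I_{H^1_{\rho^0}(\R^d)\hookrightarrow L^2_{\rho^0}(\R^d)}$ is compact and $\rho^0$ satisfies the Poincaré inequality \eqref{eq:poincare_0}; hence $-\diffL^0$ has a spectral gap and, following \cite[Section 4.7]{Pav14}, it is a non-negative self-adjoint operator on $L^2_{\rho^0}(\R^d)$ with compact resolvent, so that its spectrum is discrete. Consequently the eigenvalues are real, non-negative and accumulate only at $+\infty$; since the constants lie in the kernel of $\diffL^0$ we get $\lambda_0^0 = 0$ and $\phi_0^0 \equiv 1$, and the ordering $0 = \lambda_0^0 < \lambda_1^0 < \dots$ follows exactly as in \cref{lem:spectrum_ms}. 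As there, conjugating $\diffL^0$ by $\sqrt{\rho^0}$ to the associated Schrödinger operator shows the eigenfunctions can be normalized so as to form an orthonormal basis of $L^2_{\rho^0}(\R^d)$.

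To place the eigenfunctions in $H^1_{\rho^0}(\R^d)$ and obtain the norm bound, I would introduce the solution operator $\mathcal S_\eta^0 \colon L^2_{\rho^0}(\R^d)\to H^1_{\rho^0}(\R^d)$ associated with problem \eqref{eq:weakPDE_hom}, which is well defined by \cref{lem:weakPDE_hom}. A pair $(\lambda_n^0,\phi_n^0)$ satisfies $B^0(\phi_n^0,\psi) = \inprod{(\lambda_n^0+\eta)\phi_n^0}{\psi}_{L^2_{\rho^0}(\R^d)}$ for every $\psi \in H^1_{\rho^0}(\R^d)$, whence $\mathcal S_\eta^0 \phi_n^0 = (\lambda_n^0+\eta)^{-1}\phi_n^0$ and in particular $\phi_n^0 \in H^1_{\rho^0}(\R^d)$. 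Taking $\psi = \phi_n^0$ and using $\norm{\phi_n^0}_{L^2_{\rho^0}(\R^d)} = 1$ gives $\int_{\R^d} \Sigma\nabla\phi_n^0\cdot\nabla\phi_n^0\, \rho^0 \dd x = \lambda_n^0$; estimating the integrand from below and above by $\lambda_{\mathrm{min}}(\Sigma)\abs{\nabla\phi_n^0}^2$ and $\lambda_{\mathrm{max}}(\Sigma)\abs{\nabla\phi_n^0}^2$ yields $\lambda_n^0/\lambda_{\mathrm{max}}(\Sigma) \le \norm{\nabla\phi_n^0}_{(L^2_{\rho^0}(\R^d))^d}^2 \le \lambda_n^0/\lambda_{\mathrm{min}}(\Sigma)$, and adding the unit $L^2_{\rho^0}(\R^d)$-norm and taking square roots produces the claimed two-sided bound on $\norm{\phi_n^0}_{H^1_{\rho^0}(\R^d)}$.

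The routine parts — the divergence structure, the Lax--Milgram-type estimates, and the norm computation — I expect to go through without difficulty. As in \cref{lem:spectrum_ms}, the only delicate point is transferring the spectral-theoretic facts (discreteness of the spectrum, the orthonormal basis property, the strict ordering of the eigenvalues) to the weighted, unbounded-domain setting; this rests entirely on the compactness of the embedding guaranteed by \cref{ass:compactness} and on the unitary conjugation to a Schrödinger operator, so no new ideas beyond those in the proof of \cref{lem:spectrum_ms} are required.
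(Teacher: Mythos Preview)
Your proposal is correct and is precisely the approach the paper has in mind: the paper explicitly omits the proof of \cref{lem:spectrum_hom} because it is ``similar to the proof of the previous lemma,'' and your line-by-line adaptation of the proof of \cref{lem:spectrum_ms}---replacing $\sigma$ by the matrix $\Sigma$ and thereby turning the norm identity \eqref{eq:normH_eigenvector} into the two-sided bound via $\lambda_{\mathrm{min}}(\Sigma)\abs{\nabla\phi_n^0}^2 \le \Sigma\nabla\phi_n^0\cdot\nabla\phi_n^0 \le \lambda_{\mathrm{max}}(\Sigma)\abs{\nabla\phi_n^0}^2$---is exactly that adaptation.
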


We remark that the eigenvalues and the eigenfunctions of the generators $\diffL^\epl$ and $\diffL^0$ can be computed employing the Rayleigh quotients $R^\epl$ and $R^0$, respectively, which are defined as
\begin{equation} \label{eq:Rayleigh_def}
\begin{aligned}
R^\epl(\psi) &= \sigma \frac{\norm{\nabla \psi}_{(L^2_{\rho^\epl}(\R^d))^d}^2}{\norm{\psi}_{L^2_{\rho^\epl}(\R^d)}^2} \qquad &\text{for all } \psi \in H^1_{\rho^\epl}(\R^d), \quad \psi \neq 0, \\
R^0(\psi) &= \frac{\inprod{\Sigma \nabla \psi}{\nabla \psi}_{(L^2_{\rho^0}(\R^d))^d}}{\norm{\psi}_{L^2_{\rho^0}(\R^d)}^2} \qquad &\text{for all } \psi \in H^1_{\rho^0}(\R^d), \quad \psi \neq 0.
\end{aligned}
\end{equation}
Let $E^\epl_n$ be the finite dimensional subspace of $H^1_{\rho^\epl}(\R^d)$ spanned by the first $n$ eigenfunctions $\{ \phi_0^\epl, \phi_1^\epl, \dots, \phi_n^\epl \}$ and let $E^0_n$ be the finite dimensional subspace of $H^1_{\rho^0}(\R^d)$ spanned by the first $n$ eigenfunctions $\{ \phi_0^0, \phi_1^0, \dots, \phi_n^0 \}$. Then, the ``minimax principle'' (see, e.g., \cite{CoH62,StF73}) gives the characterization for the $n$-th eigenvalue
\begin{equation} \label{eq:minimax_principle}
\begin{aligned}
\lambda^\epl_n &= R^\epl(\phi_n^\epl) = \max_{\psi \in E^\epl_n} R^\epl(\psi) = \min_{\psi \in H^1_{\rho^\epl}(\R^d), \psi \perp E^\epl_{n-1}} R^\epl(\psi) = \min_{W \in D^\epl_n} \max_{\psi \in W} R^\epl(\psi), \\
\lambda^0_n &= R^0(\phi_n^0) = \max_{\psi \in E^0_n} R^0(\psi) = \min_{\psi \in H^1_{\rho^0}(\R^d), \psi \perp E^0_{n-1}} R^0(\psi) = \min_{W \in D^0_n} \max_{\psi \in W} R^0(\psi),
\end{aligned}
\end{equation}
where
\begin{equation}
\begin{aligned}
D^\epl_n &= \{ W \subset H^1_{\rho^\epl}(\R^d) \colon \dim W = n \}, \\
D^0_n &= \{ W \subset H^1_{\rho^0}(\R^d) \colon \dim W = n \}.
\end{aligned}
\end{equation}
We can now state and prove the homogenization of the spectrum of the multiscale generator, whose proof is inspired by the proof of Theorem 2.1 in \cite{Kes79a}.

\begin{theorem} \label{thm:homogenization_eigen}
Let $(\lambda_n^\epl, \phi_n^\epl)$ and $(\lambda_n^0, \phi_n^0)$ be ordered couples eigenvalue-eigenfunction of the generators $\diffL^\epl$ and $\diffL^0$, respectively, with $\norm{\phi_n^\epl}_{L^2_{\rho^\epl}(\R^d)} = 1$ and $\norm{\phi_n^0}_{L^2_{\rho^0}(\R^d)} = 1$. Then, under \cref{ass:dissipativity,ass:compactness} and choosing the sign of $\phi_n^\epl$ such that $\inprod{\phi_n^\epl}{\phi_n^0}_{L^2_{\rho^0}(\R^d)} > 0$, it holds for all $n \in \N$ and as $\epl \to 0$
\begin{enumerate}
\item $\lambda_n^\epl \to \lambda_n^0$,
\item $\phi_n^\epl \to \phi_n^0$ in $L^2_{\rho^0}(\R^d)$,
\item $\phi_n^\epl \rightharpoonup \phi_n^0$ in $H^1_{\rho^0}(\R^d)$.
\end{enumerate}
\end{theorem}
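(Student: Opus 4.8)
The plan is to reduce the eigenvalue problem to the already‑established homogenization of the Poisson problem through the solution operator $\mathcal S_\eta^\epl$ of \cref{lem:spectrum_ms} (and its homogenized analogue $\mathcal S_\eta^0$), exploiting that $\phi_n^\epl$ is the eigenfunction of $\mathcal S_\eta^\epl$ with eigenvalue $(\lambda_n^\epl+\eta)^{-1}$, together with the minimax principle \eqref{eq:minimax_principle}. The basic tool is the following passage to the limit: since $\rho^\epl=\tfrac{C_\mu C_{\rho^0}}{C_{\rho^\epl}}\mu(\cdot/\epl)\rho^0$ by \eqref{eq:relation_distributions} with $\tfrac{C_\mu C_{\rho^0}}{C_{\rho^\epl}}\to\abs{Y}$, $\mu$ bounded above and below, and $\mu(\cdot/\epl)\rightharpoonup\abs{Y}^{-1}$ weakly‑$\ast$ in $L^\infty(\R^d)$, one has $\inprod{g^\epl}{h^\epl}_{L^2_{\rho^\epl}(\R^d)}\to\inprod{g^0}{h^0}_{L^2_{\rho^0}(\R^d)}$ whenever $g^\epl\to g^0$ and $h^\epl\to h^0$ strongly in $L^2_{\rho^0}(\R^d)$, and likewise for the corresponding inner products of gradients.

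\emph{Upper bound $\limsup_{\epl\to0}\lambda_n^\epl\le\lambda_n^0$.} For $j=0,\dots,n$ set $w_j^\epl\defeq(\lambda_j^0+\eta)\mathcal S_\eta^\epl\phi_j^0\in H^1_{\rho^\epl}(\R^d)$, i.e. the solution of $B^\epl(w_j^\epl,\cdot)=\inprod{(\lambda_j^0+\eta)\phi_j^0}{\cdot}_{L^2_{\rho^\epl}(\R^d)}$, noting $\phi_j^0\in L^2_{\rho^0}(\R^d)=L^2_{\rho^\epl}(\R^d)$ by \cref{lem:equivalence_L2}. Applying \cref{thm:homogenization_pde} with right‑hand side $(\lambda_j^0+\eta)\phi_j^0$ gives $w_j^\epl\to(\lambda_j^0+\eta)\mathcal S_\eta^0\phi_j^0=\phi_j^0$ in $L^2_{\rho^0}(\R^d)$. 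Testing the weak formulation of $w_j^\epl$ with $w_k^\epl$ and passing to the limit with the principle above yields $B^\epl(w_j^\epl,w_k^\epl)\to(\lambda_j^0+\eta)\delta_{jk}$, and subtracting $\eta\inprod{w_j^\epl}{w_k^\epl}_{L^2_{\rho^\epl}(\R^d)}\to\eta\delta_{jk}$ gives $\sigma\inprod{\nabla w_j^\epl}{\nabla w_k^\epl}_{L^2_{\rho^\epl}(\R^d)}\to\lambda_j^0\delta_{jk}$. Hence, for $\epl$ small, the $w_j^\epl$ are linearly independent and span a subspace $W^\epl\subset H^1_{\rho^\epl}(\R^d)$ with $\dim W^\epl=n+1$, and for $\psi=\sum_jc_jw_j^\epl$ with $\abs{c}=1$ one gets $R^\epl(\psi)=\bigl(\sum_jc_j^2\lambda_j^0+o(1)\bigr)/\bigl(1+o(1)\bigr)\le\lambda_n^0+o(1)$ uniformly over the unit sphere. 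The minimax principle \eqref{eq:minimax_principle}, with $W^\epl$ as a competitor, then gives $\lambda_n^\epl\le\max_{\psi\in W^\epl}R^\epl(\psi)\le\lambda_n^0+o(1)$.

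\emph{Convergence, by induction on $n$.} For $n=0$ there is nothing to prove, since $\lambda_0^\epl=\lambda_0^0=0$ and $\phi_0^\epl\equiv\phi_0^0\equiv1$. Assume the statement for $0,\dots,n-1$. By the upper bound $\{\lambda_n^\epl\}$ is bounded, so by \eqref{eq:normH_eigenvector} and \cref{cor:equivalence_H1} $\{\phi_n^\epl\}$ is bounded in $H^1_{\rho^0}(\R^d)$; along a subsequence $\lambda_n^\epl\to\lambda_n^\ast\le\lambda_n^0$, $\phi_n^\epl\rightharpoonup\phi_n^\ast$ in $H^1_{\rho^0}(\R^d)$ and, by compactness of the injection $H^1_{\rho^0}(\R^d)\hookrightarrow L^2_{\rho^0}(\R^d)$, $\phi_n^\epl\to\phi_n^\ast$ in $L^2_{\rho^0}(\R^d)$. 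Since $\phi_n^\epl=(\lambda_n^\epl+\eta)\mathcal S_\eta^\epl\phi_n^\epl$ and $(\lambda_n^\epl+\eta)\phi_n^\epl\to(\lambda_n^\ast+\eta)\phi_n^\ast$ in $L^2_{\rho^0}(\R^d)$, \cref{cor:homogenization_pde_rhse} gives $\phi_n^\epl\to(\lambda_n^\ast+\eta)\mathcal S_\eta^0\phi_n^\ast$ in $L^2_{\rho^0}(\R^d)$, hence $\mathcal S_\eta^0\phi_n^\ast=(\lambda_n^\ast+\eta)^{-1}\phi_n^\ast$; passing $\norm{\phi_n^\epl}_{L^2_{\rho^\epl}(\R^d)}=1$ to the limit with the principle above gives $\norm{\phi_n^\ast}_{L^2_{\rho^0}(\R^d)}=1$, so $\phi_n^\ast\ne0$ and $(\lambda_n^\ast,\phi_n^\ast)$ is a normalized eigenpair of $\diffL^0$. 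Passing $\inprod{\phi_n^\epl}{\phi_j^\epl}_{L^2_{\rho^\epl}(\R^d)}=0$ ($j<n$) to the limit with the induction hypothesis gives $\inprod{\phi_n^\ast}{\phi_j^0}_{L^2_{\rho^0}(\R^d)}=0$ for $j=0,\dots,n-1$. Since the spectrum of $\diffL^0$ is simple and $\{\phi_m^0\}$ is an orthonormal basis of $L^2_{\rho^0}(\R^d)$, necessarily $\lambda_n^\ast=\lambda_m^0$ and $\phi_n^\ast=\pm\phi_m^0$ for some $m\ge n$, whence $\lambda_n^0\le\lambda_m^0=\lambda_n^\ast\le\lambda_n^0$ forces $m=n$; the sign convention $\inprod{\phi_n^\epl}{\phi_n^0}_{L^2_{\rho^0}(\R^d)}>0$ then yields $\phi_n^\ast=\phi_n^0$. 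As every subsequential limit equals $(\lambda_n^0,\phi_n^0)$, the whole sequence converges, which closes the induction and proves (i)--(iii).

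\emph{Main obstacle.} The crux is the bookkeeping between the two weights $\rho^\epl$ and $\rho^0$: one must verify that weighted inner products and energies with respect to $\rho^\epl$ converge to the correct homogenized quantities, in particular that $\sigma\inprod{\nabla w_j^\epl}{\nabla w_j^\epl}_{L^2_{\rho^\epl}(\R^d)}\to\inprod{\Sigma\nabla\phi_j^0}{\nabla\phi_j^0}_{L^2_{\rho^0}(\R^d)}=\lambda_j^0$, which is exactly where the cell problem \eqref{eq:Phi_equation} and the formula \eqref{eq:K_def} for $\Sigma=\sigma K$ enter, via \cref{thm:homogenization_pde}. The second delicate point is the ordering argument: identifying the subsequential limit eigenvalue as $\lambda_n^0$ and not some $\lambda_m^0$ with $m>n$ relies on the simplicity of the spectrum and on propagating the $L^2_{\rho^0}$-orthogonality of the eigenfunctions to the limit, so the induction must be carried out simultaneously for eigenvalues and eigenfunctions.
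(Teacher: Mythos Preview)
Your proof is correct and complete, but it follows a genuinely different strategy from the paper's. The paper proceeds \`a la Kesavan: it first obtains a crude two--sided bound $C_1\lambda_n^0\le\lambda_n^\epl\le C_2\lambda_n^0$ by comparing the Rayleigh quotients $R^\epl$ and $R^0$ through the norm--equivalence constants of \cref{lem:equivalence_L2}; it then extracts a diagonal subsequence along which all $(\lambda_n^\epl,\phi_n^\epl)$ converge, identifies each limit as an eigenpair of $\diffL^0$ via \cref{cor:homogenization_pde_rhse}, shows the limit eigenvalues are strictly increasing (orthogonality plus simplicity), and finally argues by contradiction---constructing a competitor $\xi^\epl$ orthogonal to $E_m^\epl$ from $\mathcal S_\eta^\epl\phi$---that no homogenized eigenvalue is skipped.

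Your route replaces both the crude bound and the ``no eigenvalue is skipped'' contradiction by a single sharp inequality $\limsup_{\epl\to0}\lambda_n^\epl\le\lambda_n^0$, obtained by pushing the homogenized eigenfunctions through $\mathcal S_\eta^\epl$ and using the resulting $(n{+}1)$--dimensional subspace as a minimax competitor; the identification of the limit then follows by a clean induction, the orthogonality relations forcing $\lambda_n^\ast\ge\lambda_n^0$ and the upper bound closing the gap. Both proofs hinge on the same homogenization input (\cref{thm:homogenization_pde}, \cref{cor:homogenization_pde_rhse}) and on the passage $\inprod{\cdot}{\cdot}_{L^2_{\rho^\epl}}\to\inprod{\cdot}{\cdot}_{L^2_{\rho^0}}$ for strongly convergent sequences, which you state as your ``basic tool'' and the paper verifies in \eqref{eq:convergence_inner_product_1}--\eqref{eq:convergence_inner_product_2}. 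Your argument is slightly more economical (it avoids the auxiliary function $\xi^\epl$ and the contradiction in the paper's Step~5), while the paper's approach has the advantage of treating all $n$ at once rather than inductively.
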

\begin{proof}
The proof is divided into several steps. \\
\textbf{Step 1}: \emph{Boundedness of eigenvalues and eigenfunctions.} \\
Let $\psi \in H^1_{\rho^\epl}(\R^d)$, which due to \cref{cor:equivalence_H1} belongs to $H^1_{\rho^0}(\R^d)$ as well. Employing \cref{lem:equivalence_L2} we have
\begin{equation}
\frac{C_{\mathrm{low}} \norm{\nabla \psi}_{(L^2_{\rho^0}(\R^d))^d}}{C_{\mathrm{up}} \norm{\psi}_{L^2_{\rho^0}(\R^d)}} \le \frac{\norm{\nabla \psi}_{(L^2_{\rho^\epl}(\R^d))^d}}{\norm{\psi}_{L^2_{\rho^\epl}(\R^d)}} \le \frac{C_{\mathrm{up}} \norm{\nabla \psi}_{(L^2_{\rho^0}(\R^d))^d}}{C_{\mathrm{low}} \norm{\psi}_{L^2_{\rho^0}(\R^d)}},
\end{equation}
which by the definitions of the Rayleigh quotients in \eqref{eq:Rayleigh_def} implies
\begin{equation}
\frac{C_{\mathrm{low}}^2}{\lambda_{\mathrm{max}}(K) C_{\mathrm{up}}^2} R^0(\psi) \le R^\epl(\psi) \le \frac{C_{\mathrm{up}}^2}{\lambda_{\mathrm{min}}(K) C_{\mathrm{low}}^2} R^0(\psi),
\end{equation}
where $K$ is defined in \eqref{eq:K_def}. Then, applying the ``minimax principle'' in \eqref{eq:minimax_principle} we obtain for all $n \in \N$
\begin{equation}
\frac{C_{\mathrm{low}}^2}{\lambda_{\mathrm{max}}(K) C_{\mathrm{up}}^2} \lambda^0_n \le \lambda^\epl_n \le \frac{C_{\mathrm{up}}^2}{\lambda_{\mathrm{min}}(K) C_{\mathrm{low}}^2} \lambda^0_n,
\end{equation}
which shows that the sequence of eigenvalues $\{ \lambda^\epl_n \}$ is bounded for all $n \in \N$. Moreover, due to equation \eqref{eq:normH_eigenvector} and \cref{cor:equivalence_H1} we deduce that also the sequence of eigenfunctions $\{ \phi^\epl_n \}$ is bounded in $H^1_{\rho^0}(\R^d)$, in fact we have
\begin{equation}
\norm{\phi_n^\epl}_{H^1_{\rho^0}(\R^d)} \le \frac{1}{C_{\mathrm{low}}} \norm{\phi_n^\epl}_{H^1_{\rho^\epl}(\R^d)} \le \frac{1}{C_{\mathrm{low}}} \sqrt{1 + \frac{C_{\mathrm{up}}^2}{\lambda_{\mathrm{min}}(\Sigma) C_{\mathrm{low}}^2} \lambda^0_n}.
\end{equation}
\textbf{Step 2}: \emph{Extraction of a subsequence.} \\
Due to Step 1 we can extract a subsequence $\epl'$ of $\epl$ such that $\{ \lambda^{\epl'}_0 \}$ is convergent and $\{ \phi^{\epl'}_0 \}$ is weakly convergent in $H^1_{\rho^0}(\R^d)$ and strongly convergent in $L^2_{\rho^0}(\R^d)$ and a further subsequence $\epl''$ of $\epl'$ such that $\{ \lambda^{\epl''}_0 \}$ and $\{ \lambda^{\epl''}_1 \}$ are convergent and $\{ \phi^{\epl''}_0 \}$ and $\{ \phi^{\epl''}_1 \}$ are weakly convergent in $H^1_{\rho^0}(\R^d)$ and strongly convergent in $L^2_{\rho^0}(\R^d)$. Repeating this procedure for all $n \in \N$ and choosing the standard diagonal subsequence we can find a subsequence, which is still denoted by $\epl$, such that for all $n \in \N$
\begin{equation}
\lambda_n^\epl \to \widetilde \lambda_n, \qquad \phi_n^\epl \toweak \widetilde \phi_n \text{ in } H^1_{\rho^0}(\R^d), \qquad \phi_n^\epl \to \widetilde \phi_n \text{ in } L^2_{\rho^0}(\R^d),
\end{equation}
where $\widetilde \lambda_n \in \R$ and $\widetilde \phi_n \in H^1_{\rho^0}(\R^d)$. From now on we will always consider this final subsequence, if not stated differently. \\
\textbf{Step 3}: \emph{Identification of the limits.} \\
A couple eigenvalue-eigenfunction $(\lambda_n^\epl, \phi_n^\epl)$ of the multiscale generator $\diffL^\epl$ solves the problem
\begin{equation}
B^\epl(\phi_n^\epl, \psi) = \inprod{(\lambda_n^\epl + \eta )\phi_n^\epl}{\psi}_{L^2_{\rho^\epl}(\R^d)}, \qquad \text{for all } \psi \in H^1_{\rho^\epl}(\R^d),
\end{equation}
where $B^\epl$ is defined in \eqref{eq:Be_def} and by Step 2 
\begin{equation}
(\lambda_n^\epl + \eta) \phi_n^\epl \to (\widetilde \lambda_n + \eta) \widetilde \phi_n \text{ in } L^2_{\rho^0}(\R^d).
\end{equation}
Hence, by \cref{cor:homogenization_pde_rhse} and the uniqueness of the limit it follows that the couple $(\widetilde \lambda_n, \widetilde \phi_n)$ solves the problem
\begin{equation}
B^0(\widetilde \phi_n, \psi) = \inprod{(\widetilde \lambda_n + \eta ) \widetilde \phi_n}{\psi}_{L^2_{\rho^0}(\R^d)}, \qquad \text{for all } \psi \in H^1_{\rho^0}(\R^d),
\end{equation}
where $B^0$ is defined in \eqref{eq:B0_def} and therefore it is a couple eigenvalue-eigenfunction of the homogenized generator $\diffL^0$. \\
\textbf{Step 4}: \emph{Ordering of the limits.} \\
We now show that the sequence of limits $\{ \widetilde \lambda_n \}_{n \in \N}$ is such that $\widetilde \lambda_0 < \widetilde \lambda_1 < \widetilde \lambda_2 < \cdots < \widetilde \lambda_n < \cdots$. First, due to \cref{lem:spectrum_ms} we know that $\lambda_0^\epl < \lambda_1^\epl < \lambda_2^\epl < \cdots < \lambda_n^\epl < \cdots$, hence their limits must satisfy $\widetilde \lambda_0 \le \widetilde \lambda_1 \le \widetilde \lambda_2 \le \cdots \le \widetilde \lambda_n \le \cdots$. Let us now assume by contradiction that there exist $l,m \in \N$ such that $\widetilde \lambda_l = \widetilde \lambda_m \eqdef \widetilde \lambda$. Since the eigenfunctions $\phi_l^\epl$ and $\phi_m^\epl$ corresponding to the eigenvalues $\lambda_l^\epl$ and $\lambda_m^\epl$ are orthogonal in $L^2_{\rho^\epl}(\R^d)$, then 
\begin{equation}
\inprod{\phi_l^\epl}{\phi_m^\epl}_{L^2_{\rho^\epl}(\R^d)} = 0,
\end{equation}
and passing to the limit as $\epl$ vanishes we obtain
\begin{equation} \label{eq:eigenvectors_orthogonal}
\inprod{\widetilde \phi_l}{\widetilde \phi_m}_{L^2_{\rho^0}(\R^d)} = 0.
\end{equation}
In fact, we have
\begin{equation} \label{eq:convergence_inner_product_1}
\begin{aligned}
&\abs{\inprod{\widetilde \phi_l}{\widetilde \phi_m}_{L^2_{\rho^0}(\R^d)} - \inprod{\phi_l^\epl}{\phi_m^\epl}_{L^2_{\rho^\epl}(\R^d)}} \le \\
&\hspace{2cm} \abs{\inprod{\widetilde \phi_l}{\widetilde \phi_m}_{L^2_{\rho^0}(\R^d)} - \inprod{\widetilde \phi_l}{\widetilde \phi_m}_{L^2_{\rho^\epl}(\R^d)}} + \abs{\inprod{\widetilde \phi_l}{\widetilde \phi_m}_{L^2_{\rho^\epl}(\R^d)} - \inprod{\phi_l^\epl}{\phi_m^\epl}_{L^2_{\rho^\epl}(\R^d)}},
\end{aligned}
\end{equation}
where the first term in the right hand side vanishes due to the convergence of the measure with density $\rho^\epl$ towards the measure with density $\rho^0$ and the second term tends to zero due to the convergence of the eigenvectors and because by Cauchy--Schwarz inequality and \cref{lem:equivalence_L2} we have
\begin{equation} \label{eq:convergence_inner_product_2}
\abs{\inprod{\widetilde \phi_l}{\widetilde \phi_m}_{L^2_{\rho^\epl}(\R^d)} - \inprod{\phi_l^\epl}{\phi_m^\epl}_{L^2_{\rho^\epl}(\R^d)}} \le \norm{\widetilde \phi_l \widetilde \phi_m - \phi_l^\epl \phi_m^\epl}_{L^2_{\rho^\epl}(\R^d)} \le C_{\mathrm{up}} \norm{\widetilde \phi_l \widetilde \phi_m - \phi_l^\epl \phi_m^\epl}_{L^2_{\rho^0}(\R^d)}.
\end{equation}
Therefore, equality \eqref{eq:eigenvectors_orthogonal} implies that the eigenvectors $\widetilde \phi_l$ and $\widetilde \phi_m$ corresponding to the eigenvalue $\widetilde \lambda$ are linearly independent and hence $\widetilde \lambda$ is not a simple eigenvalue, which is impossible due to \cref{lem:spectrum_hom}. \\
\textbf{Step 5}: \emph{Entire spectrum.} \\
We now prove that there is no eigenvalue of the homogenized generator $\diffL^0$ other than those in the sequence $\{ \widetilde \lambda_n \}_{n \in \N}$. Let us assume by contradiction that $\{ \widetilde \lambda_n \}_{n \in \N}$ is a subsequence of $\{ \lambda_n^0 \}_{n \in \N}$, i.e., that there exists an eigenvalue $\lambda \in \R$ of the homogenized generator $\diffL^0$ such that $\lambda \neq \widetilde \lambda_n$ for all $n \in \N$ and let $\phi \in H^1_{\rho^0}(\R^d)$ be its corresponding normalized eigenfunction, which due to \cref{lem:spectrum_hom} satisfies
\begin{equation}
\inprod{\phi}{\widetilde \phi_n}_{L^2_{\rho^0}(\R^d)} = 0, \qquad \text{for all } n \in \N.
\end{equation}
Then, there exists $m \in \N$ such that $\lambda < \widetilde \lambda_{m+1}$. Let $\varphi^\epl$ be the solution of the problem
\begin{equation} \label{eq:phie_def}
B^\epl(\varphi^\epl, \psi) = (\lambda + \eta) \inprod{\phi}{\psi}_{L^2_{\rho^\epl}(\R^d)}, \qquad \text{for all } \psi \in H^1_{\rho^\epl}(\R^d),
\end{equation}
and notice that due to \cref{thm:homogenization_pde} 
\begin{equation} \label{eq:convergence_phie}
\varphi^\epl \toweak \phi \text{ in } H^1_{\rho^0}(\R^d) \qquad \text{and} \qquad \varphi^\epl \to \phi \text{ in } L^2_{\rho^0}(\R^d).
\end{equation}
Choosing $\psi = \varphi^\epl$ in \eqref{eq:phie_def} we then have
\begin{equation} \label{eq:limit_Rayleigh_phie}
\lim_{\epl \to 0} R^\epl(\varphi^\epl) = \lim_{\epl \to 0} \sigma \frac{\norm{\nabla \varphi^\epl}_{(L^2_{\rho^\epl}(\R^d))^d}}{\norm{\varphi^\epl}_{L^2_{\rho^\epl}(\R^d)}} = \lim_{\epl \to 0} \frac{(\lambda+\eta) \inprod{\phi}{\varphi^\epl}_{L^2_{\rho^\epl}(\R^d)}}{\norm{\varphi^\epl}_{L^2_{\rho^\epl}(\R^d)}} - \eta = \lambda,
\end{equation}
where the last equality is justified by an argument similar to \eqref{eq:convergence_inner_product_1} and \eqref{eq:convergence_inner_product_2}. Let now $\xi^\epl$ be defined as
\begin{equation} \label{eq:xie_def}
\xi^\epl \defeq \varphi^\epl - \sum_{n=0}^{m} \inprod{\varphi^\epl}{\phi^\epl_n}_{L^2_{\rho^\epl}(\R^d)} \phi^\epl_n,
\end{equation}
which has the same limit as $\varphi^\epl$, i.e.,
\begin{equation}
\xi^\epl \toweak \phi \text{ in } H^1_{\rho^0}(\R^d) \qquad \text{and} \qquad \xi^\epl \to \phi \text{ in } L^2_{\rho^0}(\R^d),
\end{equation}
since a similar computation to \eqref{eq:convergence_inner_product_1} and \eqref{eq:convergence_inner_product_2} yields
\begin{equation} \label{eq:limit_inner_product_0}
\lim_{\epl \to 0} \inprod{\varphi^\epl}{\phi_n^\epl}_{L^2_{\rho^\epl}(\R^d)} = \inprod{\phi}{\widetilde \phi_n}_{L^2_{\rho^0}(\R^d)} = 0.
\end{equation}
Moreover, due to \eqref{eq:limit_inner_product_0} also its Rayleigh quotient has the same limit as \eqref{eq:limit_Rayleigh_phie}, i.e.,
\begin{equation}
\lim_{\epl \to 0} R^\epl(\xi^\epl) = \lambda,
\end{equation}
and by definition \eqref{eq:xie_def} it follows for all $n = 1, \dots, m$
\begin{equation}
\inprod{\xi^\epl}{\phi_n^\epl}_{L^2_{\rho^\epl}(\R^d)} = 0 .
\end{equation} 
Therefore, by the ``minimax principle'' \eqref{eq:minimax_principle}, $\lambda_{m+1}^\epl \le R^\epl(\xi^\epl)$ and passing to the limit as $\epl \to 0$ we deduce that $\widetilde \lambda_{m+1} \le \lambda$ which contradicts the fact that $m$ is such that $\lambda < \widetilde \lambda_{m+1}$. \\
\textbf{Step 6}: \emph{Convergence to the homogenized spectrum.} \\
From Steps 3,4,5 and by \cref{lem:spectrum_hom} it follows that the sequence of limits $\{ \widetilde \lambda_n \}_{n \in \N}$ is the same as the sequence of eigenvalues $\{ \lambda_n^0 \}_{n \in \N}$ of the homogenized generator $\diffL^0$, hence we have $\widetilde \lambda_n = \lambda_n^0$ for all $n \in \N$. Moreover, since the eigenfunctions are normalized, then the limit $\widetilde \phi_n$ can be either $\phi_n^0$ or $-\phi_n^0$. The hypothesis that the sign of $\phi_n^\epl$ is chosen such that $(\phi_n^\epl, \phi_n^0)_{L^2_{\rho^0}(\R^d)} > 0$ implies that the positive sign is the right one, i.e., $\widetilde \phi_n = \phi_n^0$ for all $n \in \N$. \\
\textbf{Step 7}: \emph{Convergence of the whole sequence.} \\
For all $n \in \N$ the fact that the only admissible limit for the subsequence $\{ \lambda_n^\epl \}$ is $\lambda_n^0$ implies that the whole sequence converges to $\lambda_n^0$. Indeed, assuming by contradiction that $\{ \lambda_n^\epl \}$ does not converge to $\lambda_n^0$ gives the existence of a subsequence $\{ \lambda_n^{\epl'} \}$ and $\delta > 0$ such that
\begin{equation} \label{eq:to_contradict}
\abs{\lambda_n^{\epl'} - \lambda_n^0} > \delta.
\end{equation}
However, repeating all the previous steps we can extract a subsequence $\{ \lambda_n^{\epl''} \}$ from $\{ \lambda_n^{\epl'} \}$ such that
\begin{equation}
\lim_{\epl \to 0} \lambda_n^{\epl''} = \lambda_n^0,
\end{equation}
which contradicts \eqref{eq:to_contradict}. Finally, a similar argument shows the convergence of the whole sequence of eigenfunctions $\{ \phi_n^\epl \}$ to $\phi_n^0$ and concludes the proof.
\end{proof}

\section{Numerical illustration} \label{sec:experiments}

In this section we present an example complementing our theoretical results. We consider the one-dimensional ($d = 1$) multiscale Ornstein--Uhlenbeck process with slow-scale potential $V(x) = x^2/2$, fast-scale potential $p(y) = \cos(y)$ and diffusion coefficient $\sigma = 1$. The numerical results are obtained setting the discretization size $h = \epl^2$ and replacing the real line $\R$ with a truncated domain $D = [-R,R]$ with $R = 5$. The error introduced by this approximation is negligible since the invariant measures $\rho^0$ and $\rho^\epl$, which appear as weight functions in the integrals, decay exponentially fast for $\abs{x} \to \infty$. In particular, the problems are discretized employing the finite element method with continuous piecewise linear functions as basis functions. We also remark that, due to the fast decay of the invariant measures, we did not need to impose any boundary condition on the solution, but we only assumed the functions $\rho^0$ and $\rho^\epl$ to be zero on the boundary of the truncated domain $D$.

\subsection{Poisson problem with a reaction term}

\begin{figure}
\centering
\includegraphics[]{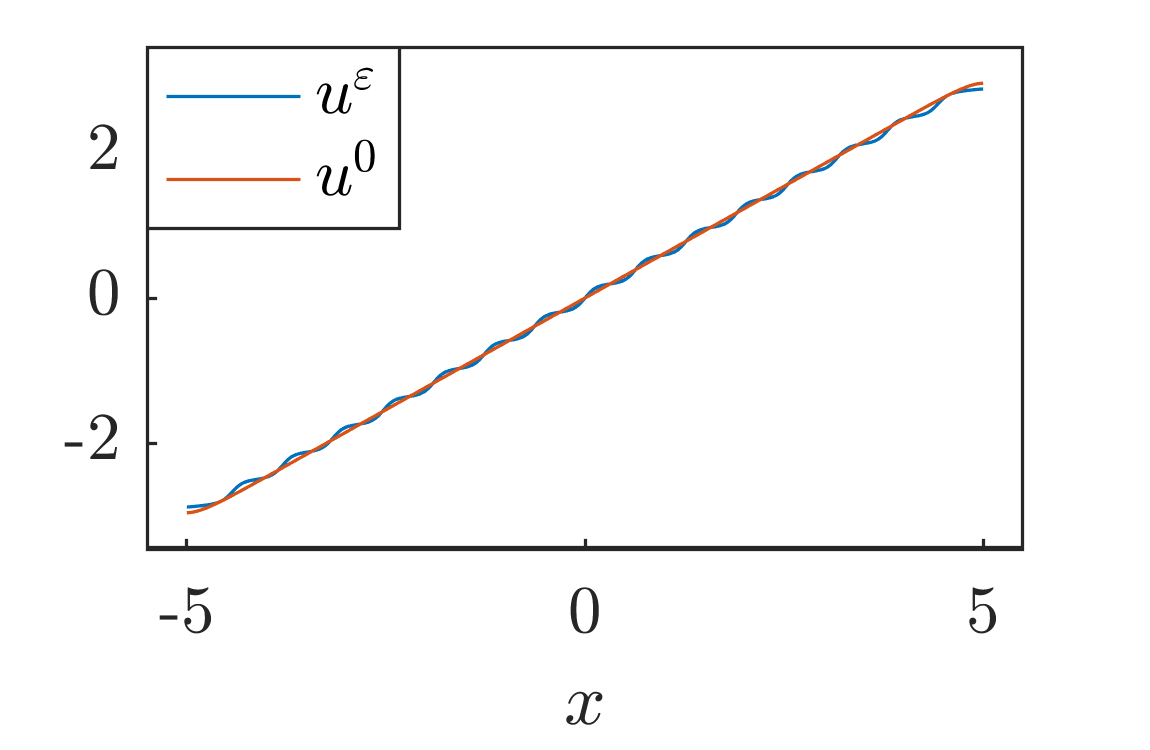}
\caption{Multiscale and homogenized solution of the Poisson problem with a reaction term setting $\epl = 0.1$.}
\label{fig:plotPoisson}
\end{figure}

\begin{figure}
\centering
\includegraphics[]{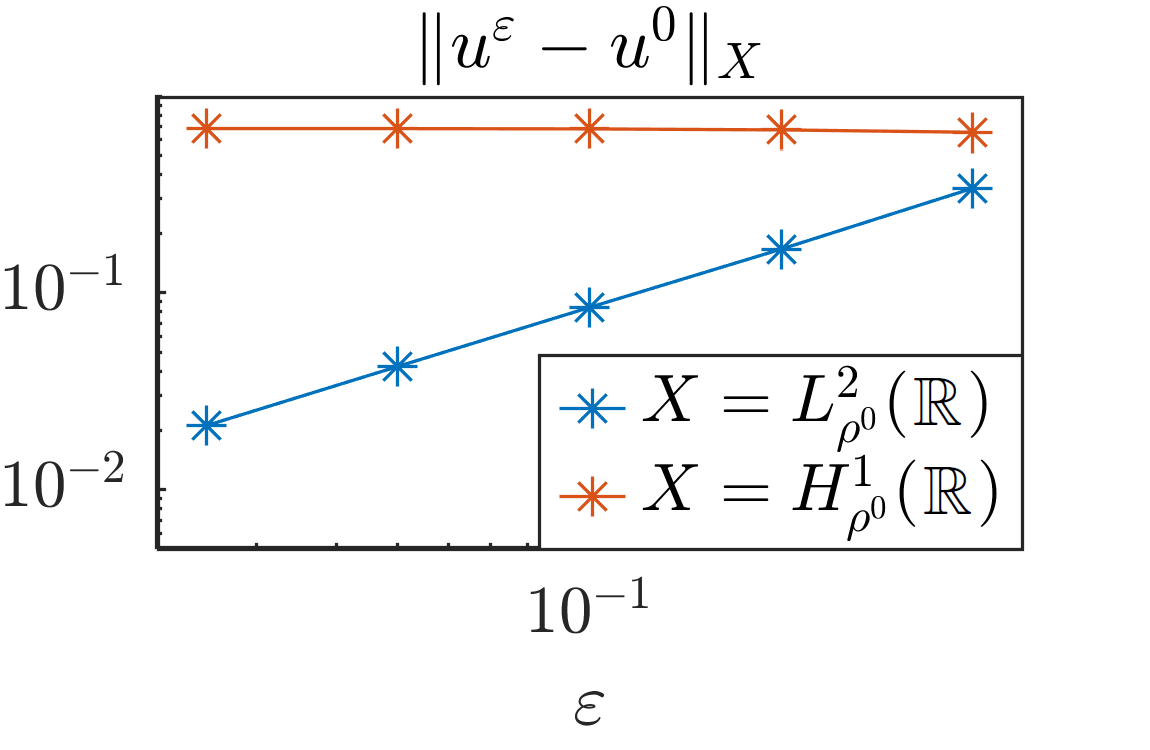} \qquad
\includegraphics[]{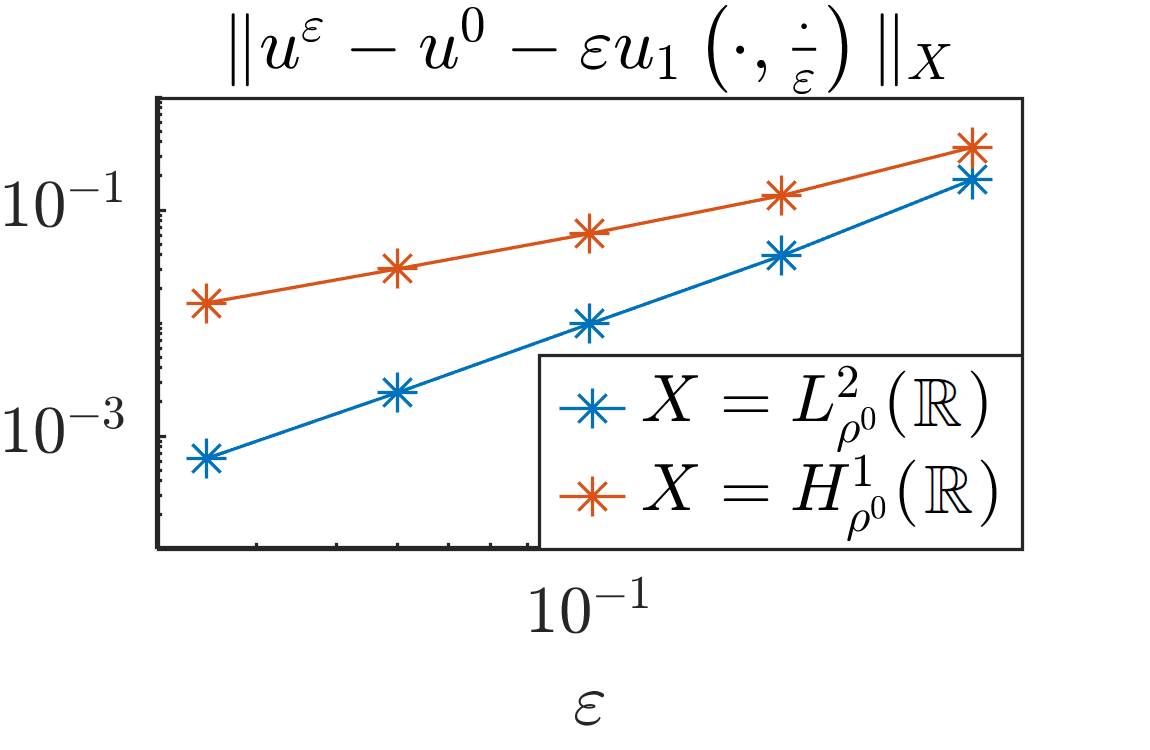}
\caption{Poisson problem with a reaction term varying $\epl$. Left: distance between the multiscale and homogenized solution. Right: distance between the multiscale solution and its first order expansion.}
\label{fig:ratePoisson}
\end{figure}

We consider the Poisson problems \eqref{eq:PDE_ms} and \eqref{eq:PDE_hom} with a reaction coefficient $\eta = 1$ and right-hand side $f(x) = x$. In this particular case the homogenized equation \eqref{eq:PDE_hom} admits the analytical solution
\begin{equation}
u^0(x) = \frac{x}{K + \eta}.
\end{equation}
In \cref{fig:plotPoisson} we plot the numerical solutions $u^\epl$ and $u^0$ setting $\epl = 0.1$, and we observe that the multiscale solution oscillates around the homogenized one. We then solve equation \eqref{eq:PDE_ms} for different values of the mutiscale parameter $\epl = 0.025, 0.05, 0.1, 0.2, 0.4$, and we compute the distance between $u^\epl$ and $u^0$ both in $L^2_{\rho^0}(\R)$ and $H^1_{\rho^0}(\R)$. On the left of \cref{fig:ratePoisson} we observe that the theoretical results given by \cref{thm:homogenization_pde} are confirmed in practice. In particular, $\norm{u^\epl - u^0}_{L^2_{\rho^0}(\R)}$ decreases as $\epl$ vanishes, while $\norm{u^\epl - u^0}_{H^1_{\rho^0}(\R)}$ remains constant. Indeed, the solution $u^\epl$ converges to $u^0$ strongly in $L^2_{\rho^0}(\R)$ but only weakly in $H^1_{\rho^0}(\R)$. We now consider a better approximation of the multiscale solution $u^\epl$, which is given by the first order expansion 
\begin{equation}
\widetilde u^\epl(x) = u^0(x) + \epl u_1 \left( x, \frac{x}{\epl} \right),
\end{equation}
where
\begin{equation}
u_1(x,y) = (u^0)'(x) \Phi(y).
\end{equation}
The analytical solution $\Phi$ of equation \eqref{eq:Phi_equation}, which is periodic in $Y = [0,L]$ and has zero-mean with respect to $\mu$, is
\begin{equation}
\Phi(y) = C_\Phi - y + \frac{L}{\widehat C_\mu} \int_0^y e^{\frac1\sigma p(z)} \dd z,
\end{equation}
where 
\begin{equation}
\widehat C_\mu = \int_0^L e^{\frac1\sigma p(y)} \dd y,
\end{equation}
and
\begin{equation}
C_\Phi = \frac{1}{C_\mu} \int_0^L y e^{-\frac1\sigma p(y)} \dd y - \frac{L}{C_\mu \widehat C_\mu} \int_0^L \int_0^y e^{\frac1\sigma(p(z) - p(y))} \dd z \dd y.
\end{equation}
On the right of \cref{fig:ratePoisson} we plot the distance between $u^\epl$ and its first order approximation $\widetilde u^\epl$ both in $L^2_{\rho^0}(\R)$ and $H^1_{\rho^0}(\R)$, and we observe that we now also have strong convergence in $H^1_{\rho^0}(\R)$ as shown by \cref{thm:corrector}.

\subsection{Eigenvalue problem}

\begin{figure}
\centering
\includegraphics[]{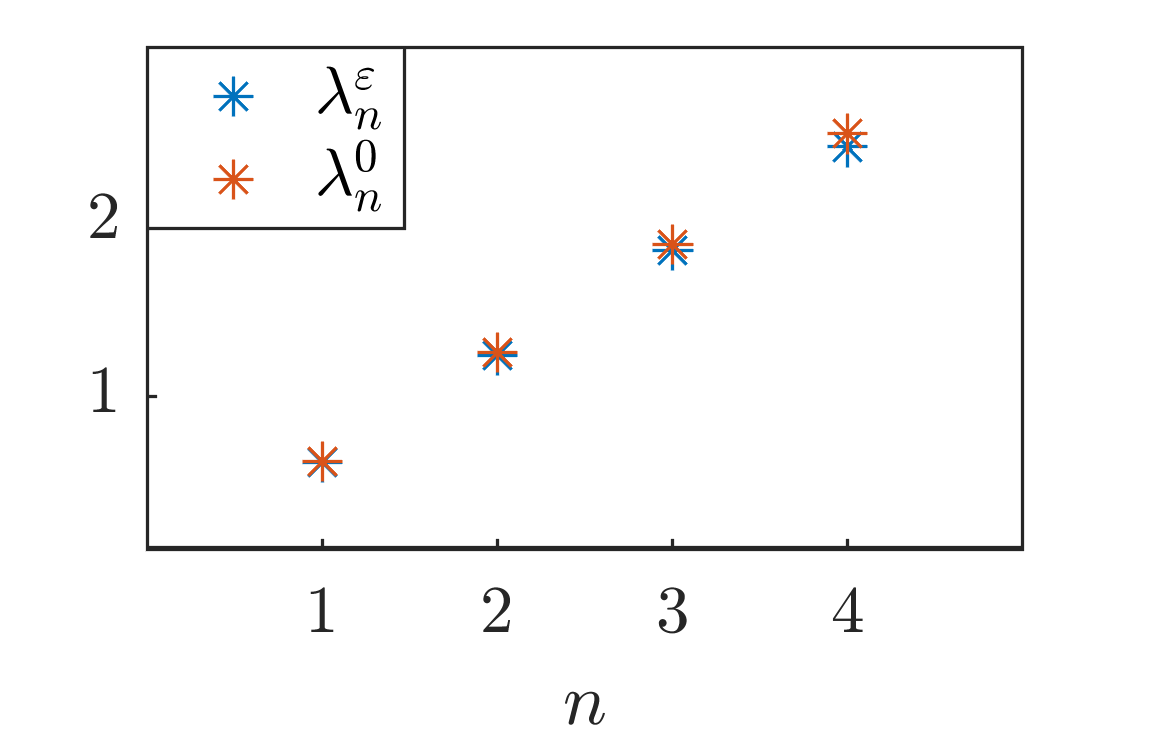} \\
\includegraphics[]{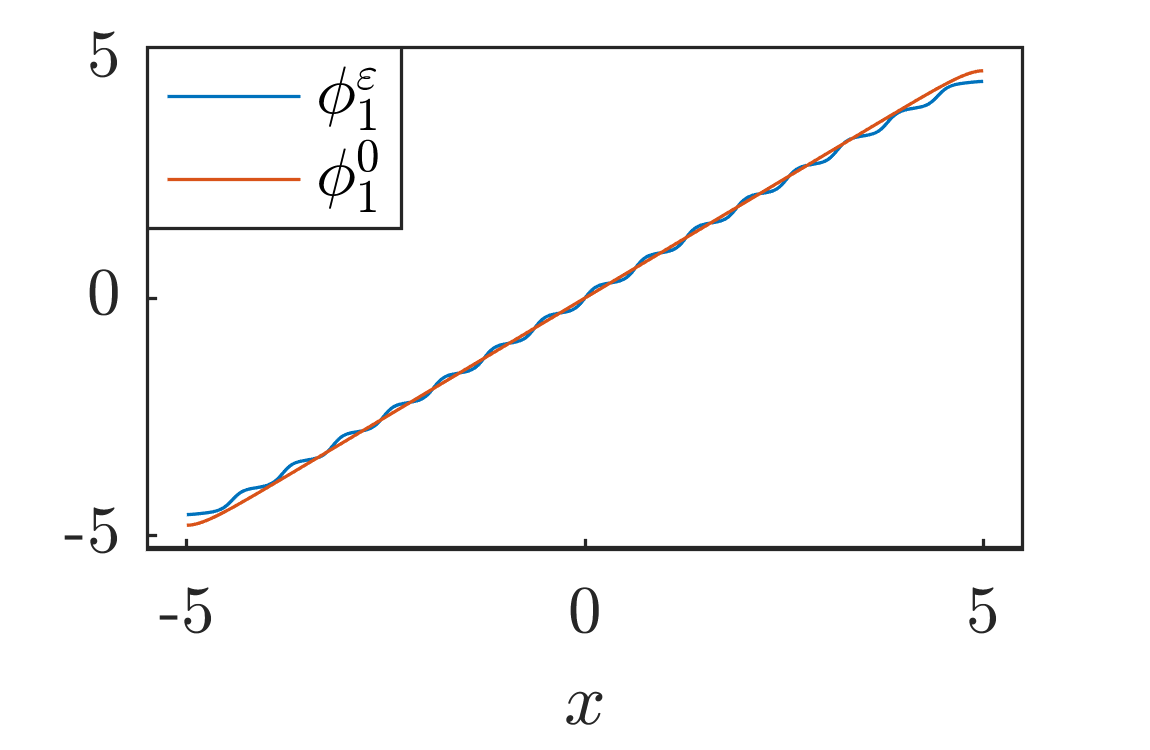} \qquad
\includegraphics[]{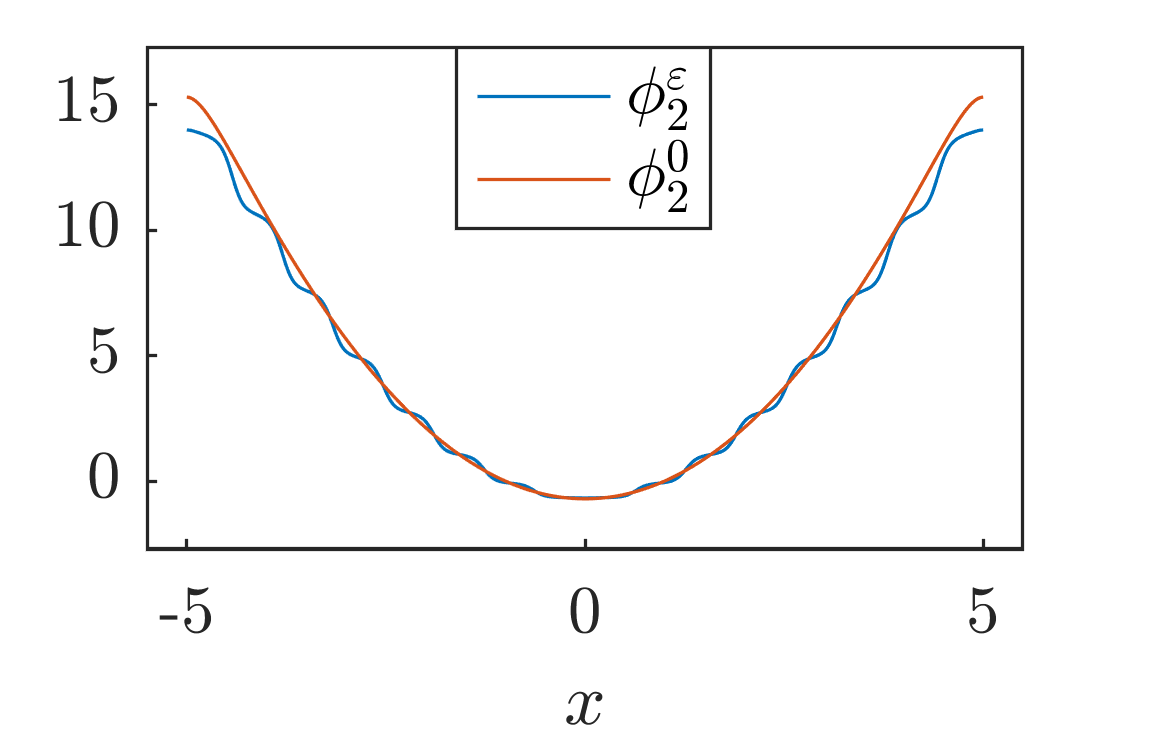} \\
\includegraphics[]{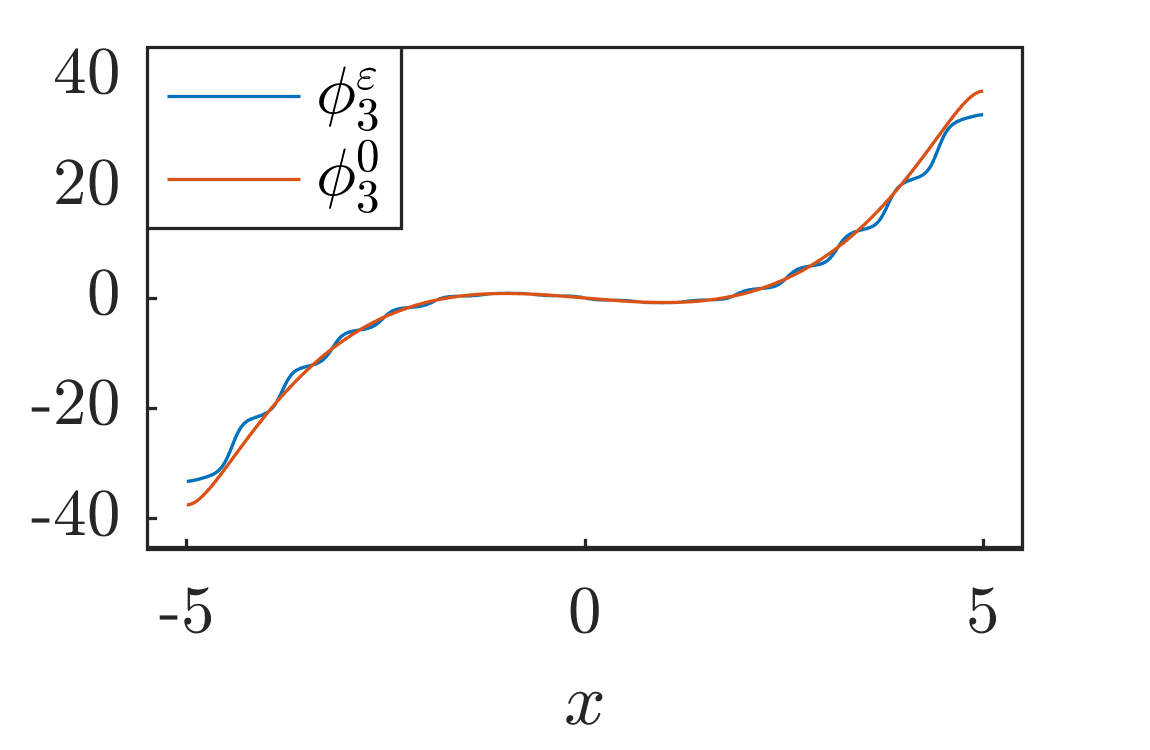} \qquad
\includegraphics[]{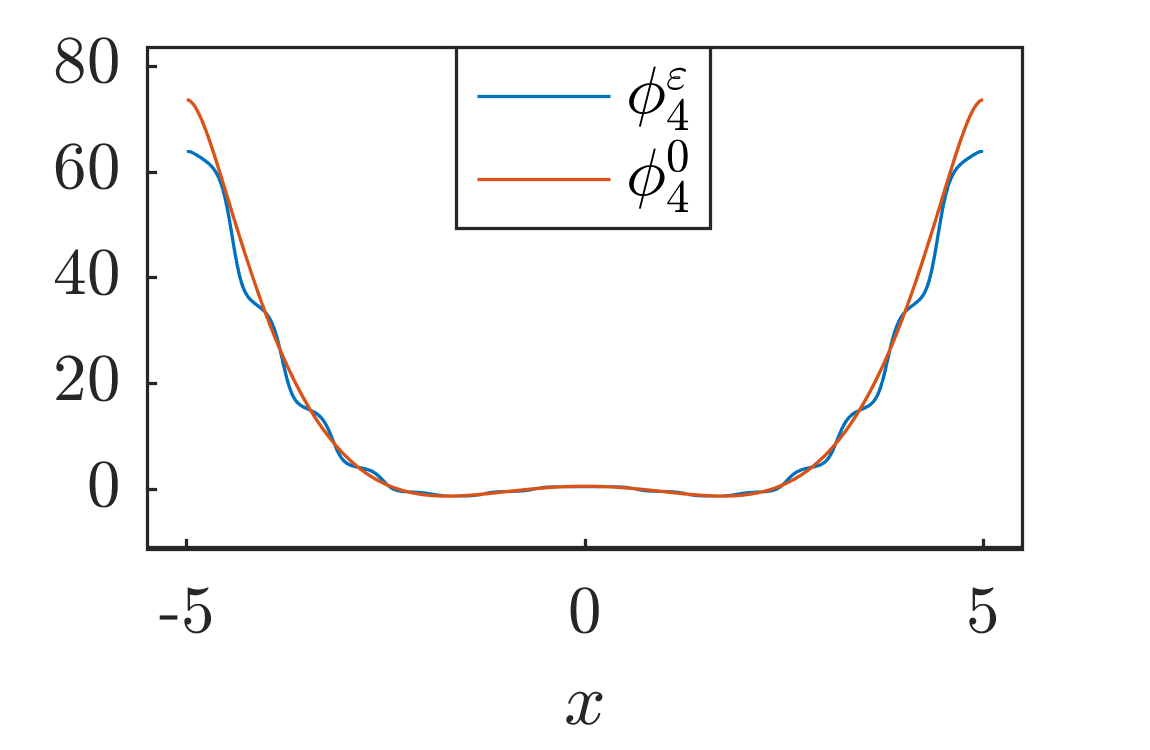}
\caption{First four eigenvalues and eigenfunctions of the multiscale and homogenized generator setting $\epl = 0.1$.}
\label{fig:plotEigen}
\end{figure}

\begin{figure}
\centering
\hspace{3cm}
\includegraphics[]{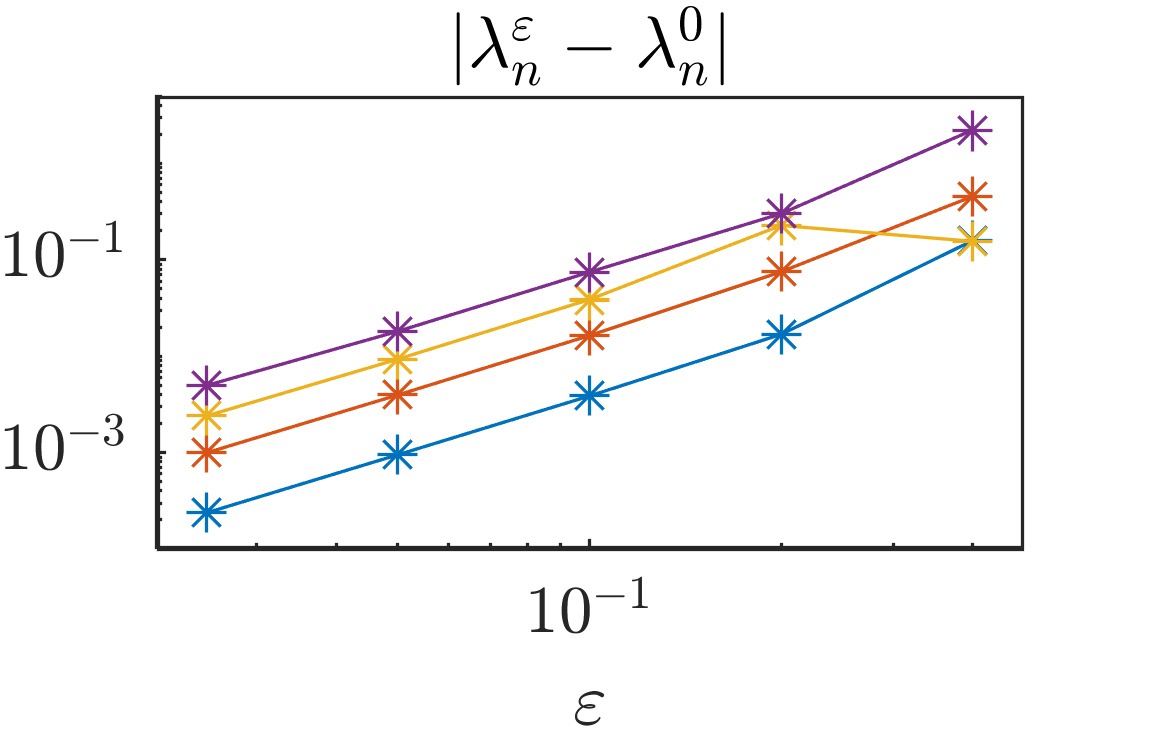} \qquad
\begin{overpic}[trim=0 -1.3cm -1.3cm 0, clip]{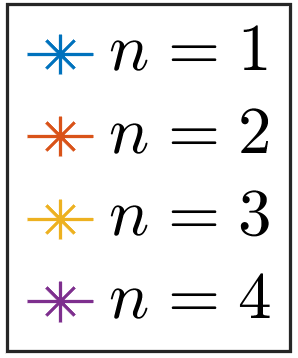}{}\end{overpic} \\
\includegraphics[]{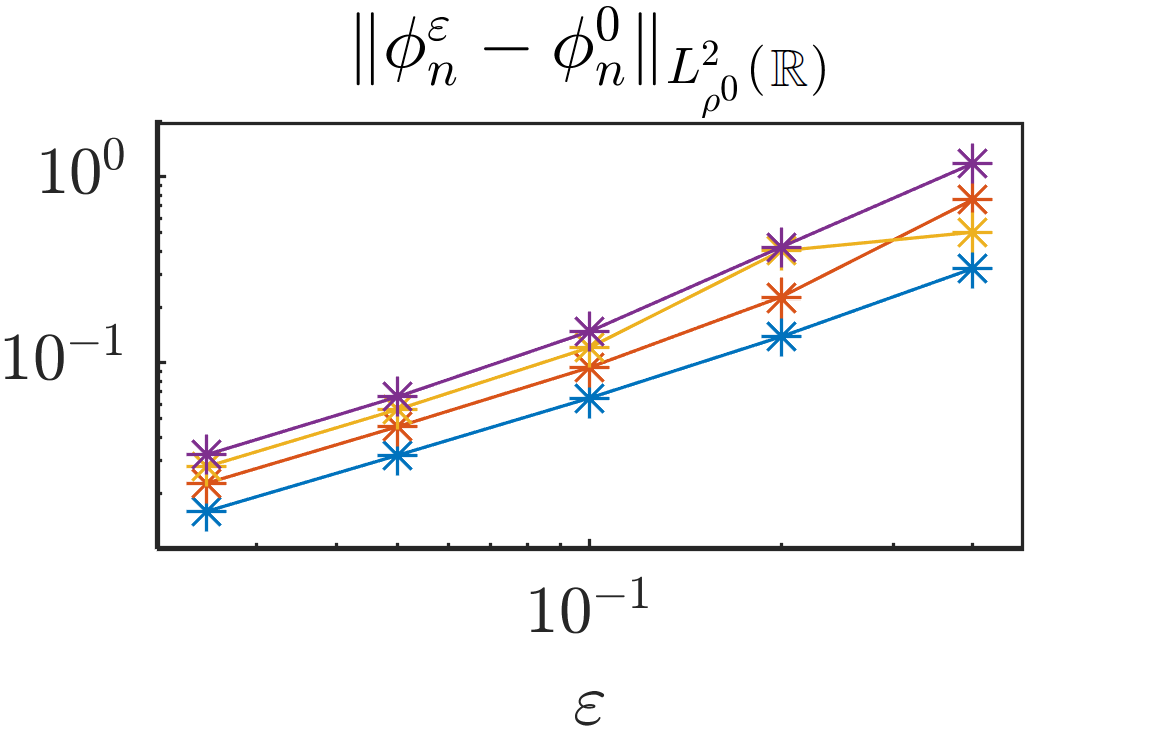} \qquad
\includegraphics[]{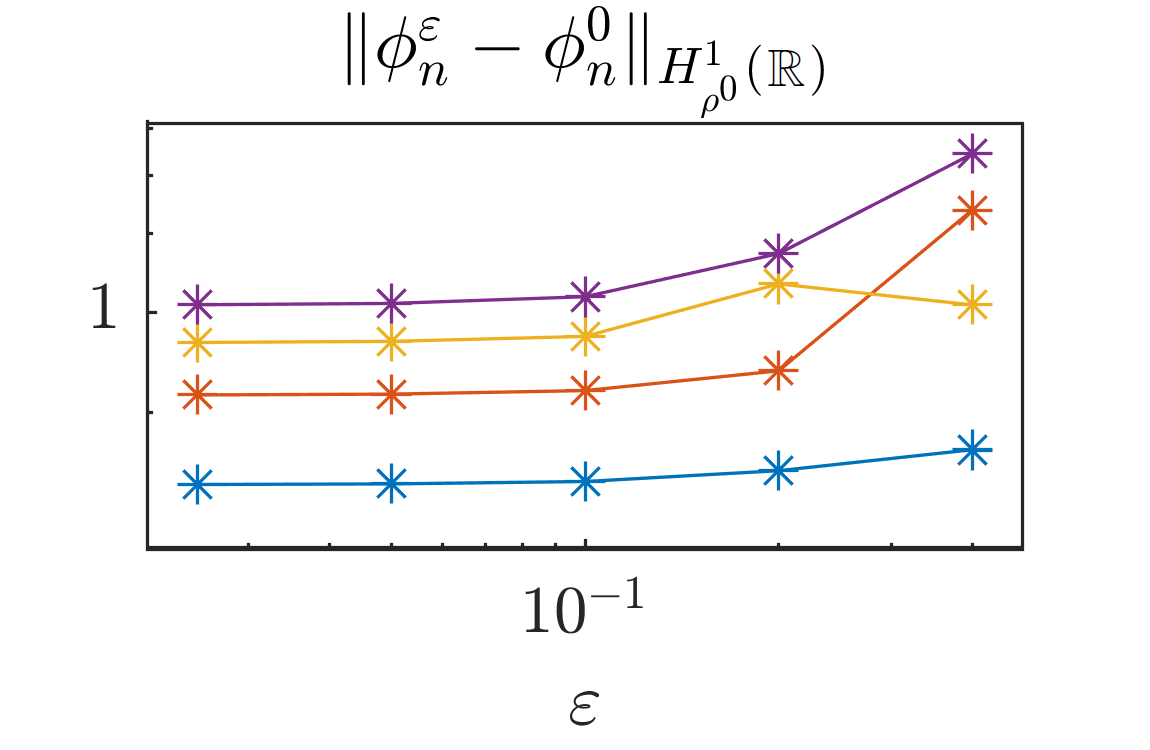}
\caption{Distance between the first four eigenvalues and eigenfunctions of the multiscale and homogenized generator varying $\epl$.}
\label{fig:rateEigen}
\end{figure}

We now consider the homogenization of the eigenvalue problem for the multiscale generator. First, in \cref{fig:plotEigen} we set $\epl = 0.1$ and plot the first four eigenvalues and eigenfunctions of both $\diffL^\epl$ and $\diffL^0$. We observe that the eigenvalues $\lambda_n^\epl$ are close to the eigenvalues $\lambda_n^0$ and that the mismatch increases for $n$ bigger, i.e., for eigenvalues with greater magnitude. Moreover, the eigenfunctions behave similarly to the solution of the Poisson problem, in the sense that $\phi_n^\epl$ oscillates around $\phi_n^0$. We remark that in the particular case of the Ornstein--Uhlenbeck process the eigenvalue problem for the homogenized generator $\diffL^0$ can be solved analytically and the eigenfunctions are given by the normalized Hermite polynomials \cite[Section 4.4]{Pav14}. In particular, we have for all $n \in \N$ that $\lambda_n^0 = Kn$ and
\begin{equation}
\phi_n^0(x) = \frac{1}{\sqrt{n!}} H_n \left( \sqrt{\frac{K}{\Sigma}} x \right),
\end{equation}
where
\begin{equation}
H_n(z) = (-1)^n e^{\frac{z^2}{2}} \frac{\d^n}{\dd z^n} \left( e^{-\frac{z^2}{2}} \right).
\end{equation}
We then solve the eigenvalue problem for different values of the multiscale parameter $\epl = 0.025, 0.05, 0.1, 0.2, 0.4$, and we compute the distance between the multiscale and homogenized eigenvalues and eigenfunctions. \cref{fig:rateEigen} demonstrates numerically what we proved theoretically in \cref{thm:homogenization_eigen}, i.e., that we have convergence of the eigenvalues and strong convergence in $L^2_{\rho^0}(\R)$, but only weak in $H^1_{\rho^0}(\R)$, of the eigenfunctions.

\section{Conclusion} \label{sec:conclusion}

We presented the homogenization of two problems involving the infinitesimal generator of the multiscale overdamped Langevin SDE. We first considered the Poisson problem with a reaction term and, after introducing appropriate weighted Sobolev spaces and extending the theory of two-scale convergence, we proved in \cref{thm:homogenization_pde} the strong convergence in $L^2$ sense and the weak convergence in $H^1$ sense of the multiscale solution to the solution of the same problem where the multiscale generator is replaced by its homogenized surrogate. In \cref{thm:corrector} we also provided a corrector result which justifies the two first terms in the usual asymptotic expansion in homogenization theory. We then analyzed the eigenvalue problem and in \cref{thm:homogenization_eigen} we showed homogenization results for the eigenvalues and the eigenfunctions of the multiscale generator. In particular, we demonstrated the convergence of the eigenvalues and the strong convergence in $L^2$ sense and the weak convergence in $H^1$ sense of the eigenvectors towards the corresponding eigenvalues and eigenfunctions of the generator of the coarse-grained dynamics. Finally, we verified numerically our theoretical results simulating the multiscale one-dimensional Ornstein--Uhlenbeck process. Our work provides rigorous convergence results in the setting of the Langevin dynamics, but we believe that similar theorems can be proved for more general classes of diffusion processes and we will return to this problem in future work. 

\subsection*{Acknowledgements} 

The author is partially supported by the Swiss National Science Foundation, under grant No. 200020\_172710. The author thanks Giacomo Garegnani for proofreading the article, and the anonymous referees for useful comments and suggestions.

\bibliographystyle{siamnodash}
\bibliography{biblio}

\begin{thebibliography}{10}

\bibitem{AGP21}
{\sc A.~Abdulle, G.~Garegnani, G.~A. Pavliotis, A.~M. Stuart, and A.~Zanoni},
  {\em Drift estimation of multiscale diffusions based on filtered data},
  Found. Comput. Math.,  (2021).

\bibitem{APV16}
{\sc A.~Abdulle, G.~A. Pavliotis, and U.~Vaes}, {\em Spectral methods for
  multiscale stochastic differential equations}, SIAM/ASA J. Uncertain.
  Quantif., 5 (2017), pp.~720--761.

\bibitem{APZ22}
{\sc A.~Abdulle, G.~A. Pavliotis, and A.~Zanoni}, {\em Eigenfunction martingale
  estimating functions and filtered data for drift estimation of discretely
  observed multiscale diffusions}, Stat. Comput., 32 (2022), pp.~Paper No. 34,
  33.

\bibitem{AiJ14}
{\sc Y.~A\"it-Sahalia and J.~Jacod}, {\em High-frequency financial
  econometrics}, Princeton University Press, 2014.

\bibitem{AMZ06}
{\sc Y.~A{\"i}t-Sahalia, P.~A. Mykland, and L.~Zhang}, {\em How often to sample
  a continuous-time process in the presence of market microstructure noise}, in
  Stochastic Finance, A.~N. Shiryaev, M.~R. Grossinho, P.~E. Oliveira, and
  M.~L. Esqu{\'i}vel, eds., Boston, MA, 2006, Springer US, pp.~3--72.

\bibitem{All92}
{\sc G.~Allaire}, {\em Homogenization and two-scale convergence}, SIAM J. Math.
  Anal., 23 (1992), pp.~1482--1518.

\bibitem{All94}
{\sc G.~Allaire}, {\em Two-scale convergence: a new method in periodic
  homogenization}, in Nonlinear partial differential equations and their
  applications. {C}oll\`ege de {F}rance {S}eminar, {V}ol. {XII} ({P}aris,
  1991--1993), vol.~302 of Pitman Res. Notes Math. Ser., Longman Sci. Tech.,
  Harlow, 1994, pp.~1--14.

\bibitem{ACP04}
{\sc G.~Allaire, Y.~Capdeboscq, A.~Piatnitski, V.~Siess, and M.~Vanninathan},
  {\em Homogenization of periodic systems with large potentials}, Arch. Ration.
  Mech. Anal., 174 (2004), pp.~179--220.

\bibitem{BLP78}
{\sc A.~Bensoussan, J.-L. Lions, and G.~Papanicolaou}, {\em Asymptotic analysis
  for periodic structures}, North-Holland Publishing Co., Amsterdam, 1978.

\bibitem{CiD99}
{\sc D.~Cioranescu and P.~Donato}, {\em An introduction to homogenization},
  vol.~17 of Oxford Lecture Series in Mathematics and its Applications, Oxford
  University Press, New York, 1999.

\bibitem{CoP09}
{\sc C.~J. Cotter and G.~A. Pavliotis}, {\em Estimating eddy diffusivities from
  noisy {L}agrangian observations}, Commun. Math. Sci., 7 (2009), pp.~805--838.

\bibitem{CoH62}
{\sc R.~Courant and D.~Hilbert}, {\em Methods of mathematical physics. {V}ol.
  {II}: {P}artial differential equations}, (Vol. II by R. Courant.),
  Interscience Publishers (a division of John Wiley \& Sons), New York-Lon don,
  1962.

\bibitem{CrV06}
{\sc D.~Crommelin and E.~Vanden-Eijnden}, {\em Reconstruction of diffusions
  using spectral data from timeseries}, Commun. Math. Sci., 4 (2006),
  pp.~651--668.

\bibitem{CrV11}
{\sc D.~Crommelin and E.~Vanden-Eijnden}, {\em Diffusion estimation from
  multiscale data by operator eigenpairs}, Multiscale Model. Simul., 9 (2011),
  pp.~1588--1623.

\bibitem{DuP16}
{\sc A.~B. Duncan, M.~H. Duong, and G.~A. Pavliotis}, {\em Brownian motion in
  an n-scale periodic potential}.
\newblock Preprint, 2022.

\bibitem{Eva98}
{\sc L.~C. Evans}, {\em Partial differential equations}, vol.~19 of Graduate
  Studies in Mathematics, American Mathematical Society, Providence, RI, 1998.

\bibitem{Gan10}
{\sc K.~Gansberger}, {\em An idea on proving weighted sobolev embeddings},
  Preprint arXiv:1007.3525,  (2010).

\bibitem{GaZ22}
{\sc G.~Garegnani and A.~Zanoni}, {\em Robust estimation of effective
  diffusions from multiscale data}, Commun. Math. Sci. (to appear),  (2022).

\bibitem{GiR86}
{\sc V.~Girault and P.-A. Raviart}, {\em Finite Element Methods for
  {N}avier-{S}tokes Equations, Theory and Algorithms}, vol.~5 of Springer
  Series in Computational Mathematics, Springer-Verlag, 1986.

\bibitem{HiS96}
{\sc P.~D. Hislop and I.~M. Sigal}, {\em Introduction to spectral theory},
  vol.~113 of Applied Mathematical Sciences, Springer-Verlag, New York, 1996.
\newblock With applications to Schr\"{o}dinger operators.

\bibitem{Hoo81}
{\sc J.~G. Hooton}, {\em Compact sobolev imbeddings on finite measure spaces},
  Journal of Mathematical Analysis and Applications, 83 (1981), pp.~570--581.

\bibitem{Kes79a}
{\sc S.~Kesavan}, {\em Homogenization of elliptic eigenvalue problems. {I}},
  Appl. Math. Optim., 5 (1979), pp.~153--167.

\bibitem{Kes79b}
{\sc S.~Kesavan}, {\em Homogenization of elliptic eigenvalue problems. {II}},
  Appl. Math. Optim., 5 (1979), pp.~197--216.

\bibitem{KeS99}
{\sc M.~Kessler and M.~S\o~rensen}, {\em Estimating equations based on
  eigenfunctions for a discretely observed diffusion process}, Bernoulli, 5
  (1999), pp.~299--314.

\bibitem{LeS16}
{\sc T.~Leli\`evre and G.~Stoltz}, {\em Partial differential equations and
  stochastic methods in molecular dynamics}, Acta Numer., 25 (2016),
  pp.~681--880.

\bibitem{MoV97}
{\sc S.~Moskow and M.~Vogelius}, {\em First-order corrections to the
  homogenised eigenvalues of a periodic composite medium. a convergence proof},
  Proc. Roy. Soc. Edinburgh, 127A (1997), pp.~1263--1299.

\bibitem{Ngu89}
{\sc G.~Nguetseng}, {\em A general convergence result for a functional related
  to the theory of homogenization}, SIAM J. Math. Anal., 20 (1989),
  pp.~608--623.

\bibitem{PSV77}
{\sc G.~C. Papanicolaou, D.~Stroock, and S.~R.~S. Varadhan}, {\em Martingale
  approach to some limit theorems}, in Papers from the {D}uke {T}urbulence
  {C}onference ({D}uke {U}niv., {D}urham, {N}.{C}., 1976), {P}aper {N}o. 6,
  Duke Univ. Math. Ser., Vol. III, Duke Univ., Durham, N.C., 1977, pp.~ii+120.

\bibitem{PPS09}
{\sc A.~Papavasiliou, G.~A. Pavliotis, and A.~M. Stuart}, {\em Maximum
  likelihood drift estimation for multiscale diffusions}, Stochastic Process.
  Appl., 119 (2009), pp.~3173--3210.

\bibitem{PaV01}
{\sc E.~Pardoux and A.~Y. Veretennikov}, {\em On the {P}oisson equation and
  diffusion approximation. {I}}, Ann. Probab., 29 (2001), pp.~1061--1085.

\bibitem{PaV03}
{\sc E.~Pardoux and A.~Y. Veretennikov}, {\em On {P}oisson equation and
  diffusion approximation. {II}}, Ann. Probab., 31 (2003), pp.~1166--1192.

\bibitem{PaV05}
{\sc E.~Pardoux and A.~Y. Veretennikov}, {\em On the {P}oisson equation and
  diffusion approximation. {III}}, Ann. Probab., 33 (2005), pp.~1111--1133.

\bibitem{Pav14}
{\sc G.~A. Pavliotis}, {\em Stochastic processes and applications}, vol.~60 of
  Texts in Applied Mathematics, Springer, New York, 2014.
\newblock Diffusion processes, the Fokker-Planck and Langevin equations.

\bibitem{PPS12}
{\sc G.~A. Pavliotis, Y.~Pokern, and A.~M. Stuart}, {\em Parameter estimation
  for multiscale diffusions: an overview}, in Statistical methods for
  stochastic differential equations, vol.~124 of Monogr. Statist. Appl.
  Probab., CRC Press, Boca Raton, FL, 2012, pp.~429--472.

\bibitem{PaS07}
{\sc G.~A. Pavliotis and A.~M. Stuart}, {\em Parameter estimation for
  multiscale diffusions}, J. Stat. Phys., 127 (2007), pp.~741--781.

\bibitem{ReS75}
{\sc M.~Reed and B.~Simon}, {\em Methods of modern mathematical physics. {II}.
  {F}ourier analysis, self-adjointness}, Academic Press [Harcourt Brace
  Jovanovich, Publishers], New York-London, 1975.

\bibitem{StF73}
{\sc G.~Strang and G.~J. Fix}, {\em An analysis of the finite element method},
  Prentice-Hall, Inc., Englewood Cliffs, N. J., 1973.
\newblock Prentice-Hall Series in Automatic Computation.

\bibitem{Tem79}
{\sc R.~Temam}, {\em Navier-{S}tokes equations}, vol.~2 of Studies in
  Mathematics and its Applications, North-Holland Publishing Co., Amsterdam-New
  York, revised~ed., 1979.
\newblock Theory and numerical analysis, With an appendix by F. Thomasset.

\bibitem{VeK11}
{\sc A.~Y. Veretennikov and A.~M. Kulik}, {\em The extended {P}oisson equation
  for weakly ergodic {M}arkov processes}, Teor. \u{I}mov\={\i}r. Mat. Stat.,
  (2011), pp.~22--38.

\bibitem{YMV19}
{\sc Y.~Ying, J.~Maddison, and J.~Vanneste}, {\em Bayesian inference of ocean
  diffusivity from {L}agrangian trajectory data}, Ocean Model., 140 (2019).

\bibitem{ZMP05}
{\sc L.~Zhang, P.~A. Mykland, and Y.~A\"{\i}t-Sahalia}, {\em A tale of two time
  scales: determining integrated volatility with noisy high-frequency data}, J.
  Amer. Statist. Assoc., 100 (2005), pp.~1394--1411.

\end{thebibliography}

\end{document}